\documentclass[11pt]{article}
\usepackage{amsmath,amssymb,amsthm,nicefrac,mathrsfs}
\usepackage[colorlinks]{hyperref}

\usepackage[a4paper, margin=4cm, footskip=1cm]{geometry}


\addtolength{\hoffset}{-.5in} 
\addtolength{\textwidth}{1.0in}

\newtheorem{theorem}{Theorem}[section]
\newtheorem{lemma}[theorem]{Lemma}
\newtheorem{proposition}[theorem]{Proposition}

\newtheorem{assumption}[theorem]{Assumption}
\newtheorem{remark}[theorem]{Remark}
\newtheorem{example}[theorem]{Example}
\numberwithin{equation}{section}

\newcommand{\be}{\begin{equation}}
\newcommand{\ee}{\end{equation}}

\newcommand{\bes}{\begin{equation*}}
\newcommand{\ees}{\end{equation*}}

\def\E{\bE}
\def\T{\bT}
\def\P{\bP} 

\def\cC{\mathcal{C}}

\def\bE{\mathbb{E}}

\def\bP{\mathbb{P}}

\def\bR{\mathbf{R}}
\def\bZ{\mathbf{Z}}

\newcommand{\Z}{\mathbf{Z}}

\newcommand{\R}{\mathbf{R}}

\renewcommand{\d}{{\rm d}}

\renewcommand{\geq}{\geqslant}
\renewcommand{\leq}{\leqslant}
\renewcommand{\ge}{\geqslant}
\renewcommand{\le}{\leqslant}

\newcommand{\lip}{\text{\rm Lip}_\sigma }

\renewcommand{\P}{\mathrm{P}}

\def\ind{\mathbf{1}}

\def\m1{\mathbf{1}}

   \DeclareMathOperator{\Cov}{Cov}  

 \def\bT{\mathbb{T}}

\allowdisplaybreaks[1]

\allowdisplaybreaks[1]

\author{Mohammud Foondun\\University of Strathclyde \and Mathew Joseph\\University of Sheffield\\
\and
Shiu-Tang Li\\ University of Utah}
\title{An approximation result for a class of stochastic heat equations with colored noise
\date{}
}

\begin{document}

\maketitle

\begin{abstract}
We show that a large class of stochastic heat equations can be approximated by systems of interacting stochastic differential equations. As a consequence, we prove various comparison principles extending earlier works of \cite{muel} and \cite{jose-khos-muel} among others. Among other things, our results enable us to obtain sharp estimates on the moments of the solution. A main technical ingredient of our method is a local limit theorem which is of independent interest.\\
\\
\noindent{\it Keywords:}
Stochastic PDEs, comparison theorems, colored noise.\\

\noindent{\it \noindent AMS 2010 subject classification:}
Primary 60H15; Secondary: 35K57.
\end{abstract}

\section{Introduction}
Consider the following stochastic heat equation,
\be \label{eq:she}
\frac{\partial }{\partial t} u_t(x) =-\nu(-\Delta)^{\alpha/2} u_t(x) +\sigma\big(u_t(x)\big)\dot{F}(t,x)\, \quad t\ge 0,\, x\in \bR^d,
\ee
with bounded and non-negative initial condition  $u_0(\cdot)$. The operator $-\nu(-\Delta)^{\alpha/2}$ is the generator of a strict stable process. The function $\sigma: \R \to \R$ is assumed to be a Lipschitz continuous function, that is $|\sigma(x)-\sigma(y)| \le \lip |x-y|$ for all $x,y$ and some constant $\lip>0$. The noise term  $\dot{F}(t,x)$ is a Gaussian random field satisfying 
\bes
\Cov\big(\dot{F}(t,x),\, \dot{F}(s,y)\big) = \delta_0(t-s) f(x-y).
\ees
We will assume that the spatial correlation is given by the {\it Riesz kernel}:
\be \label{eq:f:riesz}
f(x,y)= |x-y|^{-\beta}, \quad 0< \beta<d.
\ee
We follow Walsh \cite{wals} to make sense of \eqref{eq:she} via the following integral equation,
\bes
u_t(x) = (p_t*u_0)(x) + \int_0^t \int_{\R^d} p_{t-s} (x-y) \sigma\big(u_s(y)\big) F(\d s \d y),
\ees
where $p_t(x)$ is the density of the strict stable process with generator $-\nu(-\Delta)^{\alpha/2}$.

Questions of existence and uniqueness of \eqref{eq:she} under the above conditions are settled. For the unique solution $u_t$ to satisfy 
\begin{align} \label{eq:u:mom}
\sup_{x\in \R^d, \, t\in [0,\,T]}\E|u_t(x)|^m<\infty\quad\text{for all}\quad m\geq 2\quad\text{and}\quad T<\infty,
\end{align}
we will further need
\be \label{eq:be:al}
\beta < \alpha.
\ee
In particular this implies that $\beta < \min(\alpha, d)$, a condition which we will assume throughout the paper.  We refer  the reader to \cite{chen-dala}, \cite{spde-mini}, \cite{dala},  \cite{foon-khos-09}, \cite{foon-khos-col} and \cite{wals} for proofs and technical details, for both white and colored noise driven equations.  A detailed study of properties of such equations can be found in \cite{foon-khos-09} and \cite{foon-khos-col}; see also the extensive bibliography in these articles. When $\sigma(u)\propto u$, \eqref{eq:she} is referred as the Parabolic Anderson model (PAM). This equation is related to the KPZ equation via the Hopf-Cole tranform, see \cite{hair}. In this case, one can use the Feynman-Kac representation of the solution to make a very detailed analysis of the moments of the solution; see the work of X. Chen in \cite{chen-15} and \cite{chen-16}. Getting sharp estimates on moments of solutions is usually a starting point for a deep understanding of the almost sure pathwise behavior of the solutions. Therefore, whether one can get similar information about the moments for our more general equation is an important question.  As a consequence of the main result of this article, we will give a positive answer to the above for colored-driven equations, extending the result in \cite{jose-khos-muel} where this question was answered positively for white noise driven equations.

Consider the following natural approximation of \eqref{eq:she} by a system of interacting stochastic differential equations. For $x\in \epsilon \bZ^d$, let
\be \label{eq:disc:approx}
\d U_t^{(\epsilon)}(x) = \big(\mathscr{L}^{(\epsilon)} U_t^{(\epsilon)}\big)(x)\, \d t +  \frac{1}{\epsilon^d} \sigma\big(U_t^{(\epsilon)}(x)\big)\, \d B^{(\epsilon)}_t(x).
\ee

The operator $\mathscr{L}^{(\epsilon)}$ is an appropriate generator of a continuous time random walk $X^{(\epsilon)}$ on the fine lattice $\epsilon \Z^d$. These random walks are of the form $X^{(\epsilon)}_t=\epsilon X_{t/\epsilon^{\alpha}}$ where $X$ is a continuous time rate $1$ random walk on $\Z^d$. 
The Brownian motions $B^{(\epsilon)}_t(\epsilon k),\, k\in \Z^d$ are defined by the following,
\bes
B_t^{(\epsilon)}(\epsilon k) := \int_0^t\int_{C^{(\epsilon)}(\epsilon k)} F(\d s\, \d y),
\ees
where $C^{(\epsilon)}(\epsilon k):=\{x:=\epsilon k+y: -\frac{\epsilon}{2}\leq y_i< \frac{\epsilon}{2} \}$.  It is then easy to verify that 
\be \label{eq:bm:cov}
\Cov\big(B^{(\epsilon)}_t(\epsilon k), \, B^{(\epsilon)}_s(\epsilon l)\big)= \min(s,t)\cdot \int_{C^{(\epsilon)}( \epsilon k)}\int_{C^{(\epsilon)}( \epsilon l)} f(x-y) \,\d x\, \d y.
\ee
The initial condition $U_0^{\epsilon}(x)$, is defined by 
\begin{align*}
U_0^{\epsilon}(x):=\frac{1}{\epsilon^d}\int_{C^{(\epsilon)}(x)}u_0(y)\,\d y\quad\text{with}\quad x\in \epsilon \Z^d.
\end{align*}

Our main theorem \ref{thm:main} shows that for a large class of random walks, the approximations \eqref{eq:disc:approx} converge in a strong sense to \eqref{eq:she}.  There are a lot of papers dealing with approximations of SPDEs. These include \cite{shi-shi} and \cite{funa} which partially motivated the work in \cite{jose-khos-muel}.  Our main result is most similar to Theorem 2.4 of the latter paper where only white noise driven equations were considered. Here we are providing significant extensions of the results in \cite{jose-khos-muel}. The fact that we are considering noises which are spatially correlated makes the proof much harder. We will require some new ideas, one of which is a significant extension of the  {\it local limit theorem} for stable processes. To the best of our knowledge, this result is new and is of independent interest.  In \cite{jose-khos-muel}, the approximations were driven by independent Brownian motions. A major difference here is that our approximating SDEs are now driven by correlated Brownian motion. This is a source of additional technical hurdles. A section is devoted to the study of these SDEs. 

Our main theorem is stated for noises with correlation function given by the Riesz kernel but the reader will soon discover that this is not a major restriction. In fact, our choice of this particular kernel was partly motivated by two difficulties that the Riesz kernel presents; it has a singularity at the origin and it has a fat tail.  Another reason for this choice is that Riesz kernel represents an interpolation between smooth and white noise. All of our results will therefore hold whenever the correlation function is {\it nicer}; see the final section of this paper.

 We now describe the main results with some more care. But before, let us introduce some notations. Let $\mu$ be the \emph{dislocation distribution} of the continuous time, rate one random walk $X$ with generator $\mathscr{L}$. This is the distribution of $X_\gamma$ where
\bes
\gamma= \inf\{t\ge 0: X_t \ne 0\}.
\ees
The characteristic function of $\mu$ will be denoted by
\bes
\hat\mu(z) = \sum_{k\in \bZ^d} e^{iz\cdot k} \mu(k)= \E \exp(iz\cdot X_\gamma), \quad z \in \T^d= (-\pi, \pi]^d.
\ees
Our main assumptions on the random walk will be stated in terms of $\hat\mu$. Let functions $\mathcal{D}(z)$ and $\mathcal{E}_{k}(z)$ be functions defined by 
\be\begin{split} \label{eq:muhat}
\hat\mu(z)&= 1- \nu |z|^\alpha + \mathcal{D}(z) \\
\frac{\partial^k}{\partial z_i^k}\hat\mu(z)& = -\nu \frac{\partial^k}{\partial z_i^k}|z|^\alpha+ \mathcal{E}_{k,i}(z),\quad\text{for}\quad k\geq 1, \, 1\le i\le d. \\
\end{split}\ee
We shall primarily be working under the following assumptions.
\begin{assumption} \label{cond1} Assume that $\{z \in (-\pi,\pi]^d: \hat\mu(z) =1\} =\{0\}$ and that there exists $0<a<1$ such that
\be\begin{split} \label{eq:muhat:zero}
\mathcal{D}(z) & =  O(|z|^{a+\alpha})\quad\quad \text{as} \quad |z|\to 0.\\
\end{split}\ee
\end{assumption}
\begin{assumption}
\label{cond2} Away from zero, 
$\mathcal{D}(z)$ is $k+3$ times continuously differentiable and for all $k\le d+3$
\be \label{eq:cond2}
\mathcal{E}_{k,i}(z) = O(|z|^{\alpha+a-k}) \quad \text{as} \quad|z|\to 0.
\ee
\end{assumption}
Under Assumption \ref{cond1},  the walk is in the domain of attraction of a strict Stable($\alpha$) process. Assumption \ref{cond2} allows us to give a good decay rate 
in $x$ in the local limit theorem which compensates for the fat tails of the Riesz kernels in our approximation Theorem \ref{thm:main}; see Proposition \ref{prop:x} below.

The frequently used notation $[x]$ for a vector $x=(x_1,x_2,\cdots, x_d) \in \R^d$ denotes the point $([x_1],[x_2],\cdots, [x_d]) \in \Z^d$, where $[a], a\in \R$ is the largest integer smaller than or equal to $a$. For functions $f$ and $g$, we say $f(x)\lesssim g(x)$ if there exists a constant $C$ independent of $x$ such that $f(x)\le Cg(x)$. Our first result gives a rate of convergence of the moments of $U_t^{(\epsilon)}$ to those of $u_t$.

\begin{theorem} \label{thm:main} Let $u_t(x)$ and $U^{(\epsilon)}_t(x)$ be the unique solutions to \eqref{eq:she} and \eqref{eq:disc:approx} respectively.  Suppose that Assumptions \ref{cond1} and \ref{cond2} hold. Then for any positive $\rho \leq \min((\alpha-\beta)/2, \, a)$,
\be \label{eq:ass}
\E\big[\big\vert U_t^{(\epsilon)}(\epsilon [x/\epsilon])- u_t(x)\big\vert^m\big] \lesssim  \left(\frac{\epsilon}{t^{1/\alpha}}\right)^{a m}+ \epsilon^{\rho m},
\ee
uniformly for $x\in \R^d$ and  $\epsilon^\alpha\le t\le T$.
\end{theorem}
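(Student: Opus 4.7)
The plan is to work throughout with the mild formulation of both equations driven by the same noise $F$, and to close the estimate by a singular Gronwall inequality. Writing $x_\epsilon := \epsilon[x/\epsilon]$ and denoting by $p_t^{(\epsilon)}(\cdot)$ the transition mass function of $X^{(\epsilon)}$, equation \eqref{eq:disc:approx} can be recast as
\bes
U^{(\epsilon)}_t(x_\epsilon) = \int_{\R^d} \tfrac{1}{\epsilon^d}p_t^{(\epsilon)}(x_\epsilon - \epsilon[z/\epsilon])\, u_0(z)\, \d z + \int_0^t\!\!\int_{\R^d} \tfrac{1}{\epsilon^d}p_{t-s}^{(\epsilon)}(x_\epsilon - \epsilon[y/\epsilon])\, \sigma\bigl(U^{(\epsilon)}_s(\epsilon[y/\epsilon])\bigr)\, F(\d s\, \d y),
\ees
after identifying $\d B^{(\epsilon)}_s(\epsilon k) = \int_{C^{(\epsilon)}(\epsilon k)} F(\d s\, \d y)$. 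Subtracting the mild form of $u_t(x)$, the difference $u_t(x)-U^{(\epsilon)}_t(x_\epsilon)$ splits naturally into three pieces $D_0 + D_1 + D_2$: the initial-condition discrepancy; the kernel-approximation error in the stochastic integral (with $\sigma(u_s(y))$ frozen); and the Lipschitz/Gronwall term carrying the factor $\sigma(u_s(y)) - \sigma(U^{(\epsilon)}_s(\epsilon[y/\epsilon]))$ against the discrete kernel.

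For $D_0$ I would apply the local limit theorem of Proposition \ref{prop:x} to bound $|p_t(x-z) - \epsilon^{-d} p_t^{(\epsilon)}(x_\epsilon - \epsilon[z/\epsilon])|$ by $(\epsilon/t^{1/\alpha})^{a}$ times an integrable envelope whose spatial decay compensates the Riesz fat tail; boundedness of $u_0$ then gives $|D_0| \lesssim (\epsilon/t^{1/\alpha})^{a}$, which at the $m$-th power yields the first term in \eqref{eq:ass}. For the stochastic pieces, the key tool is the Burkholder--Davis--Gundy inequality for colored noise: for any predictable $\Phi$,
\bes
\E\Bigl|\int_0^t\!\!\int_{\R^d} \Phi_s(y)\, F(\d s\, \d y)\Bigr|^m \lesssim \E\Bigl[\Bigl(\int_0^t\!\!\int_{\R^d}\!\!\int_{\R^d} \Phi_s(y)\Phi_s(y')\,|y-y'|^{-\beta}\,\d y\, \d y'\, \d s\Bigr)^{m/2}\Bigr].
\ees
Applied to $D_1$ with $\Phi_s(y) = \bigl[p_{t-s}(x-y) - \epsilon^{-d} p_{t-s}^{(\epsilon)}(x_\epsilon - \epsilon[y/\epsilon])\bigr]\sigma(u_s(y))$, one inserts Proposition \ref{prop:x} and uses \eqref{eq:u:mom}; the Riesz self-similarity $\int\!\!\int p_{t-s}(x-y)p_{t-s}(x-y')|y-y'|^{-\beta}\d y\, \d y' \lesssim (t-s)^{-\beta/\alpha}$, together with its discrete analogue, makes the time integral convergent precisely when $\beta < \alpha$, and the sharp spatial decay from the local limit theorem yields an $\epsilon^{\rho m}$ bound for any $\rho \le \min((\alpha-\beta)/2, a)$.

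For $D_2$, BDG and the Lipschitz property of $\sigma$ (with constant $\lip$) reduce matters to
\bes
\E|D_2|^m \lesssim \int_0^t (t-s)^{-\beta/\alpha}\, \sup_{y\in \R^d}\E\bigl|u_s(y) - U^{(\epsilon)}_s(\epsilon[y/\epsilon])\bigr|^m\, \d s,
\ees
and a singular Gronwall inequality (valid since $\beta/\alpha < 1$) then propagates the bounds on $D_0$ and $D_1$ to the full difference, producing \eqref{eq:ass}. The main obstacle will be bounding $D_1$: the local-limit-theorem error must decay fast enough in space to survive integration against the $|y-y'|^{-\beta}$ singularity, which is exactly why the $x$-decay in Proposition \ref{prop:x} must be sharp, and why the derivative estimates in Assumption \ref{cond2} are needed. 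The threshold $t \ge \epsilon^\alpha$ in \eqref{eq:ass} is natural: below it, the random walk $X^{(\epsilon)}$ has not yet mixed on scales larger than one lattice spacing, so the first term of \eqref{eq:ass} cannot be improved.
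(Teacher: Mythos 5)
Your overall architecture is the same as the paper's: the paper's chain of intermediate processes $u^{(\epsilon)}\to v^{(\epsilon)}\to V^{(\epsilon)}\to W^{(\epsilon)}$ is exactly a refinement of your $D_1$, the final Gronwall step (Lemma \ref{lem:U-W}) is your $D_2$, and the tools are identical: the two local limit theorems, Burkholder for the colored noise, the Riesz semigroup bound of Proposition \ref{prop-cor}, and the H\"older continuity of $u$. The deterministic part $D_0$ is also handled as you describe (with Proposition \ref{prop:unif} in the bulk and Proposition \ref{prop:x} in the tail).

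There is, however, one genuine gap in your $D_1$. Both local limit theorems are only valid for $t-s\ge \epsilon^\alpha$, and even where they hold, the uniform bound $\epsilon^{a}(t-s)^{-(d+a)/\alpha}$ produces, after the double spatial integral against $|y-y'|^{-\beta}$, an integrand of order $\epsilon^{2a}(t-s)^{-(2a+\beta)/\alpha}$, which is \emph{not} integrable down to $s=t$ when $2a+\beta\ge\alpha$. Your plan of ``inserting Proposition \ref{prop:x}'' over the whole interval $[0,t]$ therefore fails as stated; the time window $t-\epsilon^\alpha<s\le t$ must be excised and estimated separately by bounding the two kernels individually (giving $\int_{t-\epsilon^\alpha}^{t}(t-s)^{-\beta/\alpha}\,\d s\sim \epsilon^{\alpha-\beta}=\epsilon^{2\eta}$), which is precisely why the paper truncates all intermediate approximations at $t-\epsilon^\alpha$ and restores the sliver in the $W^{(\epsilon)}$ step. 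Relatedly, your kernel difference $p_{t-s}(x-y)-\epsilon^{-d}p^{(\epsilon)}_{t-s}(x_\epsilon-\epsilon[y/\epsilon])$ is not what the local limit theorem controls: you must interpolate through $p_{t-s}(x_\epsilon-\epsilon[y/\epsilon])$, and the extra piece $p_{t-s}(x-y)-p_{t-s}(x_\epsilon-\epsilon[y/\epsilon])$ requires the gradient bound of Lemma \ref{ker-d}, whose factor $(t-s)^{-1/\alpha}$ again forces the same truncation near $s=t$. With these two repairs your argument closes and reproduces the stated rate.
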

\begin{remark}The first term on the right of \eqref{eq:ass} gives the rate of convergence in the deterministic part of \eqref{eq:disc:approx} while the second term gives the rate of convergence of the stochastic part.
\end{remark}
At this point, it is natural to ask whether there are random walks satisfying both Assumptions \ref{cond1} and \ref{cond2}. At the end of Section \ref{sec:approx:stable}, we give an affirmative answer to this question by providing a fairly large family of random walks satisfying these conditions.  We point out two other improvements over Theorem 2.4 of \cite{jose-khos-muel}. We have managed to better the rate of convergence; the result for the white noise case in \cite{jose-khos-muel} should be the thought of as the case corresponding to $\beta=1$, which only makes sense in dimension $1$.  We are also able to handle more general initial conditions.  Some more work shows the following weak convergence result.

\begin{theorem} \label{thm:strong:approx}Suppose that Assumptions \ref{cond1} and \ref{cond2} hold, then for all $M>0$, $0<t_0<T$ and $\rho<\min((\alpha-\beta)/2, a)$, 
\bes
\epsilon^{-\rho} \sup_{t\in [t_0,T]} \;\sup_{x\in \R^d:\, \|x\|\le \epsilon^{-M}} \big\vert U_t^{(\epsilon)}(\epsilon[x/\epsilon])-u_t(x)\big\vert
 \stackrel{P}{\to} 0 \quad \text{as} \quad \epsilon \downarrow 0.
\ees
\end{theorem}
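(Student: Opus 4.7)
The plan is to upgrade the pointwise moment estimate of Theorem~\ref{thm:main} to a uniform-in-probability bound over the box $[t_0,T]\times\{\|x\|\le\epsilon^{-M}\}$ via a standard discretization argument. Since the exponent $\rho$ is strict, fix an auxiliary $\rho'\in(\rho,\,\min((\alpha-\beta)/2,a))$; Theorem~\ref{thm:main} then yields, once $\epsilon^\alpha\le t_0$,
\[
\E\bigl|U_t^{(\epsilon)}(\epsilon[x/\epsilon])-u_t(x)\bigr|^m \;\lesssim\; \epsilon^{\rho' m}, \qquad t\in[t_0,T],\ x\in\R^d,\ m\ge 2.
\]

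Next, introduce a space-time grid $\cG_\epsilon$ consisting of times $t_j=t_0+j\epsilon^K$ in $[t_0,T]$ (with $K>0$ to be chosen) and lattice points $\epsilon k\in\epsilon\Z^d$ with $\|\epsilon k\|\le\epsilon^{-M}$; its cardinality is polynomial in $\epsilon^{-1}$, say $|\cG_\epsilon|\lesssim\epsilon^{-d(M+1)-K}$. Since $x\mapsto U_t^{(\epsilon)}(\epsilon[x/\epsilon])$ is already constant on $\epsilon$-cells, Chebyshev's inequality combined with a union bound and the moment estimate above give
\[
\P\Bigl(\max_{(t_j,\epsilon k)\in\cG_\epsilon}\epsilon^{-\rho}\bigl|U_{t_j}^{(\epsilon)}(\epsilon k)-u_{t_j}(\epsilon k)\bigr|>\delta_0\Bigr) \;\lesssim\; \epsilon^{-d(M+1)-K}\,\delta_0^{-m}\,\epsilon^{(\rho'-\rho)m},
\]
which tends to $0$ as $\epsilon\downarrow 0$ provided $m$ is chosen large enough.

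The main obstacle is controlling the space-time oscillation between grid points. Letting $t_j$ be the nearest grid time below $t$, I would decompose
\[
\bigl|U_t^{(\epsilon)}(\epsilon[x/\epsilon])-u_t(x)\bigr| \le \bigl|U_t^{(\epsilon)}(\epsilon[x/\epsilon])-U_{t_j}^{(\epsilon)}(\epsilon[x/\epsilon])\bigr| + \bigl|U_{t_j}^{(\epsilon)}(\epsilon[x/\epsilon])-u_{t_j}(\epsilon[x/\epsilon])\bigr| + \bigl|u_{t_j}(\epsilon[x/\epsilon])-u_t(x)\bigr|.
\]
The first and third summands require Kolmogorov-type moment estimates, uniformly in $\epsilon\in(0,1]$, of the form
\[
\E|u_t(x)-u_s(y)|^m \;\lesssim\; |t-s|^{\gamma_1 m}+|x-y|^{\gamma_2 m}, \qquad \E\bigl|U_t^{(\epsilon)}(\epsilon k)-U_s^{(\epsilon)}(\epsilon k)\bigr|^m \;\lesssim\; |t-s|^{\gamma_1 m},
\]
for appropriate H\"older exponents $\gamma_1,\gamma_2>0$. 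The bounds on $u$ are standard consequences of Walsh's theory combined with the smoothness of the stable heat kernel and \eqref{eq:u:mom}; the bounds on $U^{(\epsilon)}$ follow from the mild formulation of \eqref{eq:disc:approx} together with \eqref{eq:bm:cov} and the discrete heat-kernel estimates already developed in the proof of Theorem~\ref{thm:main}. Since $\rho<(\alpha-\beta)/2$ we may take $\gamma_2\in(\rho,(\alpha-\beta)/2)$, so that the spatial oscillation of $u$ on $\epsilon$-cells is $o(\epsilon^\rho)$ after Kolmogorov combined with a union bound across the polynomially many cells. Choosing $K$ large enough that $K\gamma_1>\rho$, an analogous Kolmogorov argument on each time cell $[t_j,t_{j+1}]$ handles the temporal oscillations of both $u$ and $U^{(\epsilon)}$. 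Combining these oscillation bounds with the grid estimate of the previous paragraph yields the convergence in probability claimed by the theorem.
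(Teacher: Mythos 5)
Your overall strategy is the same as the paper's (which follows Theorem 2.5 of \cite{jose-khos-muel}): combine the pointwise moment bound of Theorem \ref{thm:main} with a union bound over a polynomially large space--time grid, and control the oscillation between grid points by Kolmogorov-type continuity estimates. The grid/Chebyshev step and the treatment of the spatial and temporal oscillation of $u$ (with exponents $\eta=(\alpha-\beta)/2>\rho$ and $\tilde\eta=(\alpha-\beta)/(2\alpha)$) are fine.

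There is, however, one step that is false as you state it: the claim that
$\E\bigl|U_t^{(\epsilon)}(\epsilon k)-U_s^{(\epsilon)}(\epsilon k)\bigr|^m\lesssim|t-s|^{\gamma_1 m}$
holds with a fixed $\gamma_1>0$ \emph{uniformly in} $\epsilon$. The diffusion coefficient in \eqref{eq:disc:approx} is $\epsilon^{-d}\sigma(\cdot)$ and, by \eqref{eq:bm:cov} and \eqref{eq:f}, the quadratic variation accumulated at a single site over a time interval of length $h$ is of order $h\,\epsilon^{-\beta}$; for $h\le\epsilon^\alpha$ the walk essentially has not moved, so this order is attained and the increment genuinely blows up as $\epsilon\downarrow0$ for fixed $h$. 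The correct statement is the paper's Lemma \ref{lem:dis:hol}:
$\sup_x\E\bigl|\tilde U_t^{(\epsilon)}(x)-\tilde U_s^{(\epsilon)}(x)\bigr|^m\lesssim\bigl((t-s)/\epsilon^\beta\bigr)^{m/2}$,
whose proof is the only genuinely nontrivial ingredient here (it requires a Fourier/Plancherel computation with the kernel $Q_r^{(\epsilon)}$, not just the crude bound \eqref{eq:f}). Your argument survives once you substitute this estimate, because your time mesh is $\epsilon^K$ with $K$ at your disposal: the oscillation over a time cell is then of order $\epsilon^{(K-\beta)/2}$ (up to the Kolmogorov loss), which is $o(\epsilon^{\rho})$ for $K$ large. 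Two further small points: you should separate the deterministic and stochastic parts as in \eqref{u:U:split}, since the deterministic part of $U^{(\epsilon)}$ is also not temporally H\"older uniformly in $\epsilon$ (its modulus over $[s,t]$ is of order $1-e^{-(t-s)/\epsilon^\alpha}$) but is already controlled pathwise by the computation following \eqref{eq:det:diff}; and the constant in Theorem \ref{thm:main} applied on $[t_0,T]$ is uniform only because $t\ge t_0$ keeps the factor $(\epsilon/t^{1/\alpha})^{am}$ harmless, which you implicitly use and should state.
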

Our first application is an extension of the fundamental pathwise comparison principle of Mueller \cite{muel}. The comparison principle is one of the few general results in stochastic PDEs. It is very useful. In certain cases we can replace a general initial profile $u_0$ bounded away from $0$ and infinity by a constant profile if we are interested in studying the large-time or large-space asymptotics for the equation; see for instance \cite{conu-jose-khos} and \cite{conu-jose-khos-shiu}. See also the recent preprint \cite{chen-kunw} which deals with comparison principles for white noise driven stochastic heat equations, and the references therein. An argument, different from ours, to prove the following theorem was outlined in \cite{chen-16}. Additional information about comparison principles for stochastic differential equations can be found in the forthcoming thesis \cite{STL}.
\begin{theorem} \label{thm:comp} Let $u$ and $v$ be solutions to \eqref{eq:she} with initial profiles $u_0$ and $v_0$ respectively, and
such that $u_0(x) \le v_0(x)$ for all $x\in \R^d$. Then
\bes
\bP\left[u_t(x) \le v_t(x) \text{ for all } x\in \R^d, \, t \ge 0\right]=1.
\ees
\end{theorem}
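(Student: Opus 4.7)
In view of Theorem \ref{thm:strong:approx}, the plan is to first establish the comparison principle at the level of the discrete approximating SDE system \eqref{eq:disc:approx} and then pass to the limit $\epsilon \downarrow 0$. Fix a random walk satisfying Assumptions \ref{cond1}--\ref{cond2}, and let $U^{(\epsilon)}$ and $V^{(\epsilon)}$ solve \eqref{eq:disc:approx} driven by the same correlated Brownian family $B^{(\epsilon)}$, with cell-averaged initial data associated to $u_0$ and $v_0$. Pointwise domination $u_0 \le v_0$ passes to the cell averages, so $U^{(\epsilon)}_0(x) \le V^{(\epsilon)}_0(x)$ for every $x \in \epsilon\Z^d$.

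The main step is a pathwise comparison for the discrete system, which I would prove via a per-site Yamada--Watanabe argument. Set $D^{(\epsilon)}_t(x) := V^{(\epsilon)}_t(x) - U^{(\epsilon)}_t(x)$; then
\[
dD^{(\epsilon)}_t(x) = \mathscr{L}^{(\epsilon)} D^{(\epsilon)}_t(x)\, dt + \epsilon^{-d}\bigl(\sigma(V^{(\epsilon)}_t(x)) - \sigma(U^{(\epsilon)}_t(x))\bigr)\, dB^{(\epsilon)}_t(x),
\]
and crucially, the quadratic variation of this scalar semimartingale involves only $B^{(\epsilon)}(x)$, so the spatial correlation of the noise does not interfere at this stage. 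Apply Itô's formula to $\psi_n(D^{(\epsilon)}_t(x))$ for a standard Yamada--Watanabe mollifier $\psi_n$ of $y \mapsto (y_-)^2$, with $y^2\psi''_n(y) \le C/n$. The Itô correction is dominated in expectation by $C(t,\epsilon)/n$ using the Lipschitz property of $\sigma$. For the drift, convexity of $\psi_n$ together with the nonnegativity of the off-diagonal entries of $\mathscr{L}^{(\epsilon)}$ yields the Kato inequality
\[
\psi'_n\bigl(D^{(\epsilon)}_s(x)\bigr)\, \mathscr{L}^{(\epsilon)} D^{(\epsilon)}_s(x) \le \mathscr{L}^{(\epsilon)}\bigl(\psi_n \circ D^{(\epsilon)}_s\bigr)(x).
\]
Setting $G_n(t,x) := \E[\psi_n(D^{(\epsilon)}_t(x))]$ and taking expectations gives a distributional sub-heat inequality, which by Duhamel produces $G_n(t,x) \le P^{(\epsilon)}_t\bigl(\psi_n(D^{(\epsilon)}_0)\bigr)(x) + Ct/n$, where $P^{(\epsilon)}_t$ is the random walk semigroup. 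Since $D^{(\epsilon)}_0 \ge 0$ and $\psi_n \to 0$ on $[0,\infty)$, letting $n \to \infty$ forces $\E[(D^{(\epsilon)}_t(x))_-^2] = 0$. Path continuity and countability of $\epsilon\Z^d$ then upgrade this to the simultaneous a.s.\ bound $U^{(\epsilon)}_t(x) \le V^{(\epsilon)}_t(x)$ for all $(t,x)$.

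To conclude, fix $(t,x)$ with $t > 0$ and apply Theorem \ref{thm:strong:approx} to both equations: $U^{(\epsilon)}_t(\epsilon[x/\epsilon]) \to u_t(x)$ and $V^{(\epsilon)}_t(\epsilon[x/\epsilon]) \to v_t(x)$ in probability as $\epsilon \downarrow 0$. The a.s.\ inequality from the previous step persists under convergence in probability, giving $u_t(x) \le v_t(x)$ a.s.\ for each such $(t,x)$. A continuity argument based on Kolmogorov's criterion (using the moment bounds \eqref{eq:u:mom} and the standard factorization of the stochastic convolution) extends this to all $(t,x)$ with $t>0$ simultaneously on a single event of probability $1$; the case $t = 0$ is the hypothesis.

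I expect the main obstacle to be the rigorous justification of the per-site Itô argument in the infinite-dimensional SDE setting: verifying that $\int_0^t \psi'_n(D^{(\epsilon)}_s(x))\, \epsilon^{-d}(\sigma(V^{(\epsilon)}_s(x))-\sigma(U^{(\epsilon)}_s(x)))\, dB^{(\epsilon)}_s(x)$ is a true (not merely local) martingale, and that the interchange $\E \circ \mathscr{L}^{(\epsilon)} = \mathscr{L}^{(\epsilon)} \circ \E$ is legitimate on the relevant sequence space. Both should follow from the uniform moment estimates on $U^{(\epsilon)}, V^{(\epsilon)}$ inherited from standard theory for \eqref{eq:disc:approx} together with the boundedness of $\psi'_n$; the apparent difficulty introduced by the correlated Brownian motions is neatly sidestepped by working one site at a time.
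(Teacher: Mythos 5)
Your proposal is correct in substance but reaches the discrete-level comparison by a genuinely different route from the paper. The paper's own proof is very short: it invokes Theorem 1.1 of Geiss--Manthey for the \emph{finite-dimensional} systems \eqref{eq:disc:torus}, upgrades this to the infinite lattice system \eqref{eq:disc:she} via the finite-torus approximation of Proposition \ref{prop:i_set:fin} and continuity, and then passes to the SPDE exactly as you do, via Theorem \ref{thm:main} and continuity of $u$ and $v$. You instead prove the lattice comparison directly and self-containedly: a site-wise It\^o/Kato argument that never reduces to finite dimensions and never needs an external comparison theorem. This buys two things: it makes explicit why the spatial correlation of the driving noise is harmless (only the per-site quadratic variation $\langle B^{(\epsilon)}(x)\rangle$ enters the It\^o correction, and the cooperative structure of $\mathscr{L}^{(\epsilon)}$ handles the coupling through the drift via convexity), and it avoids the extra limit $N\to\infty$ over torus approximations. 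The price is the technical bookkeeping you already flag (true-martingale property of the compensated integrals, interchanging $\E$ with $\mathscr{L}^{(\epsilon)}$), all of which follows from the uniform moment bounds as you say. One cosmetic inconsistency: a mollifier satisfying $y^{2}\psi_n''(y)\le C/n$ approximates $y\mapsto y_-$ (first power), not $(y_-)^2$, and yields $\E[(D_t^{(\epsilon)}(x))_-]=0$; alternatively, since $\sigma$ is Lipschitz you can skip Yamada--Watanabe entirely, apply It\^o directly to $\phi(y)=(y_-)^2$ (which is $C^1$ with Lipschitz derivative), bound the correction by $\lip^2 c_\epsilon\, \E[\phi(D_t^{(\epsilon)}(x))]$, and close with Kato plus Gronwall--Duhamel. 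Either repair is routine and does not affect the validity of the argument.
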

As mentioned earlier, one of the main results of this paper is the following moment comparison principle.  
\begin{theorem}\label{thm:mom:comp} Let $u$ be the solution to \eqref{eq:she} and $\bar u$ be the solution to
\eqref{eq:she} but with $\sigma$ replaced by another Lipschitz continuous function $\bar \sigma$ such that $\sigma(0)=\bar\sigma(0)=0$. Assume that 
$\sigma(x)\ge \bar \sigma(x)\ge 0$ holds for all $x \in \R_+$. Then for any integer $m\ge 1$ and $k_1,k_2,\cdots, k_m\in \mathbf{N},\, x_1, x_2, \cdots, x_m \in \R^d, \, t\ge 0$ we have
\bes
\E\big[u_{t_1}(x_1)^{k_1}\cdots u_{t_m}(x_m)^{k_m}\big] \ge \E\big[\bar u_{t_1}(x_1)^{k_1}\cdots \bar u_{t_m}(x_m)^{k_m}\big].
\ees
\end{theorem}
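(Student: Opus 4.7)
The overall approach is to prove the analogous inequality first at the level of the discrete SDE system \eqref{eq:disc:approx} and then pass to the limit through Theorem \ref{thm:main}. Let $U^{(\e)}$ and $\bar U^{(\e)}$ be the solutions of \eqref{eq:disc:approx} associated with $\sigma$ and $\bar\sigma$ respectively, both driven by the same family $\{B^{(\e)}(\e k)\}_{k\in\bZ^d}$ of correlated Brownian motions and starting from the same nonnegative initial profile $U_0^{(\e)}$. Theorem \ref{thm:main} yields $L^m$-convergence of each marginal $U^{(\e)}_{t_j}(\e[x_j/\e])$ to $u_{t_j}(x_j)$ for every $m\ge 2$ (and likewise for the barred system); by H\"older's inequality this upgrades to convergence of the joint product moments $\E\bigl[\prod_j U^{(\e)}_{t_j}(\e[x_j/\e])^{k_j}\bigr]\to \E\bigl[\prod_j u_{t_j}(x_j)^{k_j}\bigr]$ and similarly for $\bar u$. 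The task thus reduces to proving
\begin{equation*}
\E\Bigl[\prod_{j=1}^m U^{(\e)}_{t_j}(x_j)^{k_j}\Bigr]\ge \E\Bigl[\prod_{j=1}^m \bar U^{(\e)}_{t_j}(x_j)^{k_j}\Bigr]
\end{equation*}
for each fixed $\e>0$ and $x_j\in\e\bZ^d$.

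For the discrete system I would run a Kolmogorov-backward / Duhamel argument. The generator of \eqref{eq:disc:approx} acts on cylindrical $\phi$ as
\begin{equation*}
\cG^\sigma\phi(u)=\sum_{x\in\e\bZ^d}(\mathscr{L}^{(\e)}u)(x)\,\partial_{u(x)}\phi+\tfrac{1}{2\e^{2d}}\sum_{x,y\in\e\bZ^d}K_\e(x,y)\,\sigma(u(x))\sigma(u(y))\,\partial_{u(x)}\partial_{u(y)}\phi,
\end{equation*}
where $K_\e(x,y):=\int_{C^{(\e)}(x)}\int_{C^{(\e)}(y)}f(a-b)\,\d a\,\d b\ge 0$, with an analogous $\cG^{\bar\sigma}$; note that the drift parts are identical. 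Setting $\psi^\sigma(t,u):=\E_u[\prod_j U^{(\e)}_t(x_j)^{k_j}]$ and defining $\psi^{\bar\sigma}$ similarly, Duhamel's formula for $\partial_t\psi=\cG\psi$ gives
\begin{equation*}
\psi^\sigma(t,u)-\psi^{\bar\sigma}(t,u)=\int_0^t e^{(t-s)\cG^\sigma}\bigl[(\cG^\sigma-\cG^{\bar\sigma})\psi^{\bar\sigma}(s,\cdot)\bigr](u)\,\d s.
\end{equation*}
The Markov semigroup $e^{(t-s)\cG^\sigma}$ preserves nonnegativity; $K_\e\ge 0$; and the bracket $\sigma(u(x))\sigma(u(y))-\bar\sigma(u(x))\bar\sigma(u(y))$ is nonneg on the nonneg orthant, which the flow preserves because $\sigma(0)=\bar\sigma(0)=0$ together with a pathwise comparison at the discrete SDE level (the precursor to Theorem \ref{thm:comp}). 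The problem is thereby reduced to the coordinate-supermodularity estimate
\begin{equation*}
\partial_{u(x)}\partial_{u(y)}\psi^{\bar\sigma}(s,u)\ge 0\qquad\text{for every }x,y\in\e\bZ^d\text{ and }u\ge 0.
\end{equation*}
The multi-time aspect is absorbed by conditioning at $t_{m-1}$: the Markov property rewrites the $m$-point moment as a single-time semigroup applied to an $(m-1)$-point functional, so induction on $m$ closes the argument provided the coordinate-supermodular property is stable under composition with the single-time semigroup.

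The supermodularity estimate is the main technical obstacle. I plan to establish it inductively along the Picard iterates $\bar U^{(\e),n}$ of the mild formulation for $\bar U^{(\e)}$: at level zero, $p^{(\e)}_t\ast u_0$ is linear in $u_0$ with nonnegative kernel, so its moment functionals are polynomials in $u_0$ with nonnegative coefficients and trivially coordinate-supermodular. At level $n+1$, substituting the mild formula and applying Isserlis' theorem to the Gaussian stochastic integrals pairs every $\bar\sigma$-factor with another, so the expectation becomes a combinatorial sum whose dependence on $u_0$ enters only through the nonneg kernels $p^{(\e)}$ and $K_\e$ and through paired expectations of the form $\E[\bar\sigma(\bar U^{(\e),n}_s(y))\bar\sigma(\bar U^{(\e),n}_{s'}(y'))]$; the inductive hypothesis, applied to these paired expectations, propagates coordinate-supermodularity to level $n+1$, and the $L^m$-convergence of the Picard scheme transfers it to $\psi^{\bar\sigma}$. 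The delicate point is that $\bar\sigma$ itself is only Lipschitz and need not be monotone or convex, so individual factors $\bar\sigma(\bar U^{(\e),n})$ are not coordinate-supermodular; what rescues the induction is the rigid pairing structure produced by Wick's theorem, together with the nonnegativity of $K_\e$, and the combinatorial bookkeeping required to make this propagation rigorous is what I expect to cost the most work. The SDE-level technology needed to formalize the discrete comparison is developed systematically in the forthcoming \cite{STL}.
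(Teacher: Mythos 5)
Your top-level architecture is exactly the paper's: reduce to the lattice systems \eqref{eq:disc:approx}, prove the moment comparison there, and pass to the limit with Theorem \ref{thm:main} (plus H\"older to upgrade marginal $L^m$-convergence to convergence of joint product moments). Your discrete-level Duhamel/generator-perturbation scheme, the reduction of the multi-time case to the single-time case by the Markov property, and the identification of coordinate-wise second-derivative positivity of the moment functional as the crux are also precisely the content of the paper's Theorem \ref{thm:mom:disc:she} and its proof, which follows \cite{cox-flei-grev}. So up to that point you have reconstructed the intended argument.

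The genuine gap is in your proposed proof of the supermodularity estimate $\partial_{u(x)}\partial_{u(y)}\psi^{\bar\sigma}\ge 0$. First, Isserlis/Wick does not apply: the Picard iterates are It\^o integrals with adapted \emph{random} integrands $\bar\sigma(\bar U^{(\e),n}_s(y))$, so the iterated integrals are not Gaussian and their product moments do not reduce to a sum over pairings; the correct tool (It\^o's product formula) produces a recursion in which paired factors $\bar\sigma(\cdot)\bar\sigma(\cdot)$ sit \emph{inside} expectations together with the remaining unpaired factors, so your inductive hypothesis cannot be applied to them in isolation. Second, even granting some pairing structure, the step where supermodularity in the initial data is pushed through composition with a merely Lipschitz, non-monotone, non-convex $\bar\sigma$ is exactly the obstruction you name, and "the rigid pairing structure rescues the induction" is a hope, not an argument. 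The paper avoids this entirely: before running the Duhamel computation it approximates $\sigma$ by smooth, compactly supported functions and truncates the lattice to a finite torus (Propositions \ref{prop:sigma:comp}, \ref{prop:sigma:smoo} and \ref{prop:i_set:fin}) --- which also makes your formal infinite-dimensional generator manipulations legitimate --- and then invokes the convexity argument of Propositions 16 and 17 of \cite{cox-flei-grev}, which obtains $\partial^2 S_s^{\sigma_1}F\ge 0$ by differentiating the (now smooth) stochastic flow twice in its initial condition rather than by a combinatorial expansion. To repair your proof you should either adopt that smoothing/truncation-plus-flow-differentiation route, or supply a genuinely new proof of the second-derivative positivity; as written, that step would fail.
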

A key phenomenon exhibited by many equations of the type \eqref{eq:she} is intermittency. This happens when there are some rare regions in space and time at which the solution is extremely large. Mathematically, this phenomenon is analyzed using moment Lyapunov exponents, see \cite{carm-molc}, \cite{foon-khos-09}, \cite{foon-khos-col} and \cite{geor-jose-khos-shiu}.
 As a result of the theorem above we could provide bounds on the Lyapunov exponents of \eqref{eq:she} if we could compare it to an equation for which bounds are already known. The moments of the Parabolic Anderson model have been very carefully analysed for a large class of equations; see for instance \cite{bert-canc} and \cite{chen-16}. Therefore, a consequence of Theorem \ref{thm:mom:comp} is the following. 

\begin{theorem} \label{thm:lyap} Suppose there exists positive constants $0<a<b$ such that $a\le u_0(x) \le b$ for all $x$, and suppose that $\sigma(0)=0$ and there is a $\ell_\sigma>0$ such that $\sigma(x)\geq \ell_{\sigma} |x| $ for all $x\in \R^d$. Then,
\bes
m^{\frac{2\alpha-\beta}{\alpha-\beta}} \lesssim \liminf_{t \to \infty} \frac{1}{t}\, \log \E\big(\big\vert u_t(x) \big\vert^m\big) \le \limsup_{t \to \infty} \frac{1}{t}\, \log \E\big(\big\vert u_t(x) \big\vert^m\big) \lesssim   m^{\frac{2\alpha-\beta}{\alpha-\beta}}.
\ees
\end{theorem}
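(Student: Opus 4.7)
The plan is to sandwich $u$, in the moment sense, between two parabolic Anderson models (PAMs) with linear noise coefficient and constant initial condition, and then invoke the sharp moment asymptotics for the colored-noise PAM established by X.~Chen in \cite{chen-15,chen-16}. The two inputs that drive the sandwich are Theorem \ref{thm:mom:comp} (replace $\sigma$ by a linear function) and Theorem \ref{thm:comp} (replace the initial profile by a constant). Throughout, we need positivity $u_t(x)\ge 0$ almost surely so that the hypothesis $\sigma(x)\ge\ell_\sigma|x|$ kicks in on the relevant range; this follows from Theorem \ref{thm:comp} applied with the identically zero solution, which is available because $\sigma(0)=0$.

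For the upper bound, the Lipschitz property gives $\sigma(x)=\sigma(x)-\sigma(0)\le \lip x$ for $x\ge 0$, and this dominated $\sigma$ is also nonnegative on $\R_+$. Applying Theorem \ref{thm:mom:comp} with $(\lip\,\cdot\,,\sigma)$ in the roles of $(\sigma,\bar\sigma)$ yields $\E[\hat u_t(x)^m]\ge \E[u_t(x)^m]$, where $\hat u$ solves \eqref{eq:she} with linear coefficient $\lip$ and initial condition $u_0$. The bound $u_0\le b$ together with Theorem \ref{thm:comp} then dominates $\hat u$ pathwise by the PAM $\hat u^{\sharp}$ with the same coefficient $\lip$ but constant initial condition $b$; by linearity of this equation in its initial datum, $\hat u^{\sharp}\stackrel{d}{=}b\cdot w$, where $w$ denotes the unit-initial-condition PAM with coefficient $\lip$. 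The lower bound is mirror symmetric: from $\sigma(x)\ge \ell_\sigma x\ge 0$ on $\R_+$ and Theorem \ref{thm:mom:comp} we obtain $\E[u_t(x)^m]\ge \E[\check u_t(x)^m]$ for the linear PAM $\check u$ with coefficient $\ell_\sigma$ and initial condition $u_0$, and $u_0\ge a$ together with Theorem \ref{thm:comp} then produces a lower bound of the form $a\cdot \check w$, where $\check w$ is the unit-initial-condition PAM with coefficient $\ell_\sigma$.

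Both bounds are now reduced to computing the Lyapunov exponents of PAMs driven by our fractional Laplacian and Riesz-correlated noise with a linear diffusion coefficient and identically $1$ initial condition. For these equations the Feynman--Kac machinery of \cite{chen-15,chen-16} delivers matching bounds of the form
\bes
\tfrac{1}{t}\log \E\bigl[w_t(x)^m\bigr]\;\asymp\; m^{\,(2\alpha-\beta)/(\alpha-\beta)},
\ees
in our regime $\beta<\alpha$; the same asymptotics hold for $\check w$. The multiplicative factors $a^m$ and $b^m$ from the initial-profile reduction contribute only an $O(m/t)$ correction to $\tfrac{1}{t}\log \E[u_t(x)^m]$ and are harmless in the limit, so the two sandwich inequalities combine to give the advertised liminf lower bound and limsup upper bound.

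The serious analytic work has already been done in Theorem \ref{thm:mom:comp} (and upstream of it, in the approximation Theorem \ref{thm:main}); the steps above are then essentially mechanical chained applications of the two comparison principles, together with an invocation of known results for the linear equation. The main caveat is to check that the moment asymptotics quoted from \cite{chen-15,chen-16} genuinely cover the parameter regime $(\alpha,\beta)$ with $\beta<\alpha$ and the particular generator $-\nu(-\Delta)^{\alpha/2}$ used here; this is precisely the regime in which the relevant Feynman--Kac functional is integrable and produces the exponent $(2\alpha-\beta)/(\alpha-\beta)$, so no additional estimation is needed once the reference results are quoted correctly.
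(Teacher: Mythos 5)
Your proposal is correct and follows essentially the same route as the paper: sandwich the moments of $u$ between those of linear (PAM) equations with coefficients $\ell_\sigma$ and $\lip$ via Theorem \ref{thm:mom:comp}, then quote the known moment asymptotics $m^{(2\alpha-\beta)/(\alpha-\beta)}$ for the colored-noise PAM. The only cosmetic difference is that you reduce to constant initial data explicitly through the pathwise comparison Theorem \ref{thm:comp} and linearity, whereas the paper simply notes that the cited PAM result (there attributed to \cite{kunw}) already extends to initial profiles bounded away from zero and infinity.
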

Under the conditions of the above theorem one can also give bounds on the upper and lower exponential growth indices considered in \cite{huan-le-nual}.

We end this introduction with a plan of the paper. In Section \ref{sec:int_sys} we consider a system of interacting SDEs driven by correlated Brownian motions and prove a moment comparison principle for the system. After than  in Section \ref{sec:approx:stable} we prove a local limit theorem needed for Theorem \ref{thm:main} which we prove in Section \ref{sec:thm:main}. Section \ref{sec:remain} proves the remaining results stated in the introduction. Finally in Section \ref{sec:extend} we state several extensions to our results.

For a random variable $Z$, we denote $\|Z\|_p:= \E[|Z|^p]^{1/p}$. Throughout this paper $C$ will denote an arbitrary constant which might change from line to line. We will use the symbols $c_1,c_2,\cdots $ to denote constants whose value remains fixed throughout a proof but might be different in a different proof. 

\section{Interacting systems of stochastic differential equations} \label{sec:int_sys}
In this section, we study a class of interacting SDEs which we will use to approximate the SPDE. The results in this section are inspired by \cite{cox-flei-grev}. As opposed to \cite{cox-flei-grev}, which deals with independent driving Brownian motions, here the driving Brownian motions are correlated. It is worth pointing out that one of the main motivations behind \cite{cox-flei-grev} was to give general ergodic theoretic results for such systems by comparing them with ``exactly solvable" models such as the Fisher-Wright model and the Feller's branching diffusion model. 

We will need to first prove some basic results concerning our system. Consider 
\be \label{eq:disc:she}
\d U_t(x) = \big(\mathscr{L}U_t\big)(x) + \sigma\big(U_t(x)\big) \d B_t(x),\quad\text{where}\quad x\in \Z^d.
\ee
The operator $\mathscr{L}$ is the generator of a continuous time random walk $X_t$ defined by
\be \label{eq:gen}
\mathscr{L}:=\nu \sum_{i,j\in \Z^d} (p_{i,j} - \delta_{i,j})z_j \frac{\partial}{\partial z_i},
\ee
for some probability transition function $p_{i,j} = p(j-i)$. Here $\sigma:\R\to \R$ is a Lipschitz function and $B_t(x),\, x\in \Z^d$ denote a collection of correlated Brownian motions with 
\be \label{eq:bm:corr}
\Cov\big(B_s(x),\, B_t(y)\big)=(s\wedge t)\cdot \mathcal{R}(x-y),
\ee
where $\mathcal{R}$ is a nonnegative definite and symmetric function satisfying
\be \label{eq:ass:dex}
\int_0^t \d s \, \sum_{y,z\in \Z^d} P_{s}(y) \mathcal{R}(y-z) P_{s}(z) < \infty \quad \text{ for all } t>0,
\ee
with
\bes
P_t(x) : = \P(X_t=-x), \quad x \in \Z^d.
\ees
By a solution to \eqref{eq:disc:she} with a bounded initial profile $U_0: \Z^d\to \R$, we mean a solution to the following integral equation 
\bes
U_t(x) = \big(P_t* U_0\big) (x) + \int_0^t \sum_{y\in \Z^d} P_{t-s}(x-y) \sigma\big(U_s(y)\big)\, \d B_s(y), \quad x \in \Z^d
\ees
where
\bes
\big(P_t* U_0\big) (x) := \sum_{y\in \Z^d} P_{t}(x-y) U_0(y).
\ees
The main result of this section is the following.
\begin{theorem} \label{thm:mom:disc:she}
Consider two systems of equations of the form \eqref{eq:disc:she} but with $\sigma_1$ and $\sigma_2$ instead of $\sigma$. Let $U_t$ and $V_t$ be the unique respective solutions. Suppose $\sigma_1(x)\ge \sigma_2(x)\ge0$ for all $x\in \R_+$ with $\sigma_1(0)=\sigma_2(0)=0$. Then for any $x_1,\,x_2,\cdots, x_m\in \Z^d,\, t_1,\, t_2\cdots t_m \ge 0$ and nonnegative integers $k_1,\, k_2,\, \cdots k_m$,
\be \label{eq:mom:disc:she}
\E\big[U_{t_1}(x_1)^{k_1}\cdots U_{t_m}(x_m)^{k_m}\big] \ge \E\big[V_{t_1}(x_1)^{k_1}\cdots V_{t_m}(x_m)^{k_m}\big].
\ee
\end{theorem}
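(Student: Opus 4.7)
Following the spirit of \cite{cox-flei-grev}, but adapted to correlated noise. I first carry out three routine reductions. Since $\sigma_1(0)=\sigma_2(0)=0$, a Tanaka-type argument (It\^o's formula applied to a smooth approximant of $u\mapsto u_-^2$, using that $\mathscr{L}$ has nonnegative off-diagonal entries) shows that $U_t,V_t\geq 0$ coordinate-wise a.s.\ when the initial data are nonnegative. I next truncate to a finite box $\Lambda\subset\Z^d$ with absorbing boundary, recovering the original system as $\Lambda\uparrow\Z^d$ via \eqref{eq:ass:dex} and uniform moment bounds, and I mollify $\sigma_i$ so they are smooth and globally Lipschitz. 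By the strong Markov property it suffices to treat the equal-time case $t_1=\cdots=t_m=t$ with test functional $F(u)=\prod_j u(x_j)^{k_j}$.

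\textbf{Comparison identity.} For the finite smoothed system, let $v(s,u):=\E[F(V_s^u)]$; it is $C^{1,2}$ and satisfies $\partial_s v=\mathcal{G}_2 v$, $v(0,\cdot)=F$, with generator
\begin{equation*}
\mathcal{G}_i G(u) = \sum_x (\mathscr{L}u)(x)\,\partial_{u(x)}G + \tfrac12\sum_{x,y}\sigma_i(u(x))\sigma_i(u(y))\,\mathcal{R}(x-y)\,\partial_{u(x)}\partial_{u(y)}G.
\end{equation*}
Applying It\^o's formula to $s\mapsto v(t-s,U_s)$ and taking expectations, the $\partial_s$ term cancels the first-order part of $\mathcal{G}_1$ and yields the identity
\begin{equation*}
\E[F(U_t)]-\E[F(V_t)] = \tfrac12\int_0^t \E\!\left[\sum_{x,y}\mathcal{R}(x-y)\,\Delta(U_s;x,y)\,\partial_{u(x)}\partial_{u(y)} v(t-s,U_s)\right]\!ds,
\end{equation*}
where $\Delta(u;x,y):=\sigma_1(u(x))\sigma_1(u(y))-\sigma_2(u(x))\sigma_2(u(y))\geq 0$ for $u\geq 0$, by $\sigma_1\geq\sigma_2\geq 0$.

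\textbf{Main step and obstacle.} What remains is to show $\partial_{u(x)}\partial_{u(y)}v(s,u)\geq 0$ for all $x,y\in\Lambda$, $s\geq 0$, $u\in\R_+^\Lambda$. I would establish the stronger statement that the cone $\mathcal{K}$ of smooth functions whose mixed partial derivatives of every order are nonnegative on $\R_+^\Lambda$ is left invariant by the $V$-semigroup. The drift part has semigroup $e^{s\mathcal{A}}G(u)=G(P_s u)$ with nonnegative Markov kernel $P_s$, which visibly maps $\mathcal{K}\to\mathcal{K}$. For the diffusion part, I would adapt the duality of \cite{cox-flei-grev}: construct a dual representation $v(s,u)=\E_\eta\!\left[\prod_{z\in\Lambda} u(z)^{\eta_s(z)}\,\Psi(\eta)\right]$, where $\eta$ is a nonnegative-integer particle configuration whose dynamics encode coalescence/branching events driven by $\sigma_2$, coupled across pairs of sites by the correlation weight $\mathcal{R}(x-y)\geq 0$, and $\Psi\geq 0$ collects the associated exponential tilts. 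Such a representation exhibits $v(s,\cdot)$ as a nonnegative combination of monomials in $u\geq 0$, so all mixed partials are automatically nonnegative; a Trotter splitting then gives $e^{s\mathcal{G}_2}\mathcal{K}\subset\mathcal{K}$. Plugged back into the comparison identity, this yields $\E[F(U_t)]\geq\E[F(V_t)]$, and the reductions are undone by dominated convergence using \eqref{eq:ass:dex}. The principal obstacle is precisely the dual construction: whereas in \cite{cox-flei-grev} the independence of the noises yields a clean coalescing random walk dual, the correlation $\mathcal{R}$ forces coupled pair-movement events in the dual, and verifying that all dual rates remain nonnegative is the delicate step, relying essentially on the pointwise nonnegativity of $\mathcal{R}$ that holds for the Riesz kernel application driving the present paper.
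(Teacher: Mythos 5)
Your overall architecture coincides with the paper's: reduce to a finite, smoothed system, write a Duhamel ("integration by parts") identity for the difference of the two semigroups, observe that the generator difference is a sum of terms $\mathcal{R}(x-y)\,\Delta(u;x,y)\,\partial_{u(x)}\partial_{u(y)}$ with $\mathcal{R}\geq 0$ pointwise and $\Delta\geq 0$ on the nonnegative cone, and conclude once the mixed second partials of the relevant semigroup acting on monomials are shown to be nonnegative. (Your Duhamel identity is the transpose of the paper's --- you let $\mathcal{G}_1-\mathcal{G}_2$ act on $S^{\sigma_2}_{t-s}F$ and average over the $\sigma_1$-process, whereas the paper lets it act on $S^{\sigma_1}_sF$ and applies $S^{\sigma_2}_{t_1-s}$ outside; both are legitimate, and the multi-time case is handled by essentially the same Markov-property induction in either formulation.)

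The genuine gap is the step you yourself flag as "the principal obstacle": the nonnegativity of $\partial_{u(x)}\partial_{u(y)}v(s,\cdot)$, equivalently the invariance of the cone $\mathcal{K}$ under the diffusion semigroup. This is the entire content of the theorem --- everything else is bookkeeping --- and you do not prove it. Worse, the route you sketch is unlikely to close: a moment-dual representation $v(s,u)=\E_\eta[\prod_z u(z)^{\eta_s(z)}\Psi(\eta)]$ with a particle configuration $\eta$ exists only for very special diffusion coefficients (essentially polynomial $\sigma^2$, as in Fisher--Wright or Feller branching), and the theorem is asserted for arbitrary Lipschitz $\sigma_2$ with $\sigma_2(0)=0$; no such dual is available after mollification of a generic $\sigma_2$. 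The paper avoids duality entirely at this point: it first truncates $\sigma$ to have compact support in $(a,b)$ with $0<a<b$ (a reduction you omit, and which is needed to place the problem on the compact state space $[a,b]^k$ where the machinery of \cite{cox-flei-grev} applies), and then invokes the convexity argument of Propositions 16 and 17 of \cite{cox-flei-grev}, which establishes $\partial^2_{z_iz_j}S^{\sigma_1}_sF_1\geq 0$ by analytic/flow arguments rather than by constructing a dual particle system. Without either that argument or a worked-out substitute, your proof does not establish the inequality. Your observation that the pointwise nonnegativity of $\mathcal{R}$ (not merely its positive definiteness) is what the sign argument really uses is correct and worth retaining.
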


The complete proof of this result is lengthy but the main underlying idea is quite simple. Consider the following stochastic differential equations
\begin{align*}
\d X_t = b(X_t) \d t+ \sigma_1\big(X_t\big) \d B_t,
\end{align*} 
and
\begin{align*}
\d Y_t = b(Y_t) \d t+ \sigma_2\big(Y_t\big) \d B_t,
\end{align*} 
with the same initial condition $x_0$. Set
\bes
P_t^{\sigma_1} f(x):=\E^xf(X_t) \quad\text{and}\quad 
P_t^{\sigma_2} f(x):=\E^xf(Y_t),
\ees
and let $\mathcal{L}^{\sigma_1},\, \mathcal{L}^{\sigma_2}$ be the generators 
corresponding to $X_t$ and $Y_t$ respectively. The idea is to show that
\begin{align}\label{i1}
P_t^{\sigma_1} f(x)\ge P_t^{\sigma_2} f(x),
\end{align}
whenever  $\sigma_1\ge \sigma_2$ and $f$ belonging to some appropriate class of functions. By appealing to the following ``integration by parts" formula
\begin{equation*}
P_t^{\sigma_1} f(x)-P_t^{\sigma_2} f(x)=\int_0^{t}P^{\sigma_2}_{t-s}(\mathcal{L}^{\sigma_1}-\mathcal{L}^{\sigma_2})P_t^{\sigma_1} f(x)\,\d s,
\end{equation*}
showing \eqref{i1} amounts to proving
\begin{equation*}
(\mathcal{L}^{\sigma_1}-\mathcal{L}^{\sigma_2})P_t^{\sigma_1} f(x)\geq 0.
\end{equation*}
This is the strategy used in \cite{cox-flei-grev} and which we will adopt here. Since our equations are significantly more complicated, we will need to overcome a few technical difficulties. We begin with the following existence-uniqueness result. Since the ideas involved are quite standard, we will only give a sketch of the proof.

\begin{theorem} \label{thm:dis:mom:bd}
 Fix $T>0$ and let $U_0: \Z^d \to \R$ be a bounded initial function. Under the assumption \eqref{eq:ass:dex}
there exists a unique
solution to \eqref{eq:disc:she} such that
\be \label{eq:dis:mom:bd}
\sup_{0\le t\le T}\sup_{x  \in \Z^d} \E \big[\big \vert U_t(x)\big \vert^m\big] < \infty \quad \text{ for all } m \ge 2.
\ee
\end{theorem}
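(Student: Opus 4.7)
The plan is to construct the solution by Picard iteration; the only nonstandard ingredient is an $L^m$ estimate for the stochastic convolution against the correlated Brownian family $\{B_t(x)\}_{x\in\Z^d}$. Define $U_t^{(0)}(x):=(P_t * U_0)(x)$ and iterate
\begin{equation*}
U_t^{(n+1)}(x) := (P_t * U_0)(x) + \int_0^t \sum_{y\in\Z^d} P_{t-s}(x-y)\,\sigma\!\left(U_s^{(n)}(y)\right)\,\d B_s(y).
\end{equation*}
Since $P_t$ is a probability kernel on $\Z^d$, the deterministic term is uniformly bounded by $\|U_0\|_\infty$, so all of the work goes into controlling the stochastic term.

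The key estimate is as follows. Fix $t$ and $x$ and view the stochastic integral as the terminal value of the $L^2$-martingale $M_s := \int_0^s \sum_y P_{t-r}(x-y)\sigma(U_r^{(n)}(y))\,\d B_r(y)$, $s\in[0,t]$, whose quadratic variation is
\begin{equation*}
[M]_s = \int_0^s \sum_{y,z} P_{t-r}(x-y)\,P_{t-r}(x-z)\,\sigma(U_r^{(n)}(y))\,\sigma(U_r^{(n)}(z))\,\mathcal{R}(y-z)\,\d r.
\end{equation*}
Nonnegative definiteness of $\mathcal{R}$ ensures the integrand is pointwise $\ge 0$, so applying BDG at $s=t$ followed by Minkowski's integral inequality (valid for $m/2\ge 1$) yields
\begin{equation*}
\|M_t\|_m^2 \le C_m \int_0^t \sum_{y,z} P_{t-r}(x-y)\,P_{t-r}(x-z)\,\bigl\|\sigma(U_r^{(n)}(y))\sigma(U_r^{(n)}(z))\bigr\|_{m/2}\,\mathcal{R}(y-z)\,\d r.
\end{equation*}
H\"older's inequality followed by AM--GM gives $\|\sigma(u)\sigma(v)\|_{m/2}\le\tfrac12(\|\sigma(u)\|_m^2+\|\sigma(v)\|_m^2)$; combined with the Lipschitz bound $|\sigma(u)|\le|\sigma(0)|+\lip|u|$ and the translation invariance of $\Z^d$ (which eliminates the dependence on $x$), the estimate reduces to
\begin{equation*}
\sup_{x\in\Z^d}\|U_t^{(n+1)}(x)\|_m^2 \le C_1 + C_2 \int_0^t \Bigl(1 + \sup_{y\in\Z^d}\|U_r^{(n)}(y)\|_m^2\Bigr)\,g(t-r)\,\d r,
\end{equation*}
where $g(u):=\sum_{y,z} P_u(y)\,\mathcal{R}(y-z)\,P_u(z)$. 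Assumption \eqref{eq:ass:dex} asserts exactly that $\int_0^T g(u)\,\d u<\infty$.

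This integral inequality drives everything. Gronwall's inequality, applied after taking the supremum over $s\in[0,t]$ on both sides, produces an $n$-independent bound on $\sup_{s\le T}\sup_x\|U_s^{(n)}(x)\|_m$. The same computation carried out on the increments $U^{(n+1)}-U^{(n)}$, using $|\sigma(u)-\sigma(v)|\le\lip|u-v|$, shows that the Picard iterates are Cauchy in $L^m$ uniformly in $(t,x)\in[0,T]\times\Z^d$; the limit solves the integral equation and inherits \eqref{eq:dis:mom:bd}. Uniqueness follows by applying the same estimate to the difference of any two solutions. The principal technical point is the $L^m$ estimate for the stochastic convolution itself: it is the nonnegativity (and $P_tP_t$-integrability) of $\mathcal{R}$ encoded in \eqref{eq:ass:dex} that allows the bilinear expression $\sigma(U(y))\sigma(U(z))$ to be reduced to individual moments, after which the rest of the argument is a standard scalar Gronwall exercise.
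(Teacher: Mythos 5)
Your argument is correct and follows essentially the same route as the paper: Picard iteration, a Burkholder--Davis--Gundy bound for the stochastic convolution whose bracket involves $\mathcal{R}(y-z)$, reduction of the bilinear term $\|\sigma(U(y))\sigma(U(z))\|_{m/2}$ to individual $L^m$ moments, and assumption \eqref{eq:ass:dex} to close the loop. The only cosmetic difference is that the paper runs the contraction in an exponentially weighted norm $\sup_t e^{-\beta t}\|\cdot\|_m^2$, choosing $\beta$ large so that $\int_0^\infty e^{-\beta s} g(s)\,\d s$ is small, rather than invoking a renewal-form Gronwall inequality for the (integrable but possibly unbounded) convolution kernel $g(t-r)$; the two devices are standard and interchangeable here.
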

\begin{proof}The proof of existence and uniqueness uses the standard Picard iteration scheme. Define iteratively
\bes
U_t^{(n+1)}(x)= \big(P_t* U_0\big) (x) + \int_0^t \sum_{y} P_{t-s}(x-y) \sigma\big(U^{(n)}_s(y)\big)\, \d B_s(y).
\ees
For $\beta>0$, set
\bes
\mathcal{A}_{\beta, n+1} := \sup_{x \in \Z^d} \sup_{t\ge 0} e^{-\beta t} \cdot \big\| U^{(n+1)}_t(x) - U^{(n)}_t(x)\big\|_m^2
\ees
We now use Burkholder's inequality and the fact that noise is correlated to obtain
\bes
 \mathcal{A}_{\beta, n+1} \le  \lip^2\cdot  \mathcal{A}_{\beta, n}  \int_0^t \d s \, e^{-\beta s} \sum_{y,z} P_{s}(x-y) \mathcal{R}(y-z) P_{s}(x-z).
\ees
Assumption \eqref{eq:ass:dex} allows us to choose $\beta$ large enough so that
\bes
\mathcal{C}:=\lip^2 \int_0^t \d s \, e^{-\beta s} \sum_{y,z} P_{s}(x-y) \mathcal{R}(y-z) P_{s}(x-z) <1,
\ees
one gets $\mathcal{A}_{\beta, n+1} \le \mathcal{A}_{\beta,1} \cdot \mathcal{C}^n$ and $\mathcal{A}_{\beta, n+1}$ decreases
exponentially fast to $0$. The completeness of $L^p(\P)$ gives the existence of a solution satisfying \eqref{eq:dis:mom:bd}. Uniqueness follows from a standard argument.
\end{proof}

In order to prove Theorem \ref{thm:mom:disc:she} we need several approximation results which are interesting in their
own right. Our first result shows that one can approximate $\sigma$ by a function which has bounded support. 
For any $N\geq 1$, let $\sigma^{(N)}$ be a Lipschitz function defined as follows
\bes 
\sigma^{(N)}(x)= \begin{cases}
\sigma(x), &   x \in [-N,N], \\
0, & x \notin (-2N,2N), \\
\sigma(N)\cdot\left\lbrace \frac{2N-x}{N}\right\rbrace, &  x \in (N,2N), \\
\sigma(-N) \cdot \left\lbrace\frac{x+2N}{N}\right\rbrace,&  x \in (-2N,-N).
\end{cases}
\ees
We note that the Lipschitz constant is independent of $N$.

\begin{proposition} \label{prop:sigma:comp} Consider \eqref{eq:disc:she} but with $\sigma^{(N)}$ instead of $\sigma$ and let $U^{(N)}$ denote its unique solution. Then $\lim_{N\to \infty} U^{(N)}_t(x)=U_t(x)$ in $L^m(\P)$ for every $m\ge 2$.
\end{proposition}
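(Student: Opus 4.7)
The plan is to bound $\|U_t(x) - U_t^{(N)}(x)\|_m$ by writing the difference as a stochastic integral and closing a Gronwall-type inequality, in the spirit of the proof of Theorem~\ref{thm:dis:mom:bd}. Subtracting the two mild formulations gives
\[
U_t(x) - U_t^{(N)}(x) = \int_0^t \sum_{y\in \Z^d} P_{t-s}(x-y)\,\bigl[\sigma(U_s(y)) - \sigma^{(N)}(U_s^{(N)}(y))\bigr] \, dB_s(y),
\]
and I would split the bracket as $[\sigma(U_s(y)) - \sigma(U_s^{(N)}(y))] + [\sigma(U_s^{(N)}(y)) - \sigma^{(N)}(U_s^{(N)}(y))]$. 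The first piece is bounded in absolute value by $\lip |U_s(y) - U_s^{(N)}(y)|$. The second piece vanishes on $\{|U_s^{(N)}(y)| \le N\}$, and using the explicit form of $\sigma^{(N)}$ together with the linear growth of $\sigma$, is bounded by $C(|U_s^{(N)}(y)|+1)\,\mathbf{1}_{\{|U_s^{(N)}(y)|>N\}}$.

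Next I would apply Burkholder--Davis--Gundy to the stochastic integral and Minkowski's inequality to the quadratic variation (valid since $\mathcal{R}\ge 0$ for the kernels of interest) to obtain
\[
\|U_t(x) - U_t^{(N)}(x)\|_m^2 \lesssim \int_0^t \sum_{y,z\in \Z^d} P_{t-s}(x-y)\, \mathcal{R}(y-z)\, P_{t-s}(x-z)\, g_s(y)\, g_s(z)\, ds,
\]
where $g_s(y) := \|\sigma(U_s(y)) - \sigma^{(N)}(U_s^{(N)}(y))\|_m$. The truncation part of $g_s(y)$ is of order $1/N$ by Cauchy--Schwarz followed by Markov's inequality at order $2m$, which requires the uniform estimate $\sup_{N,\, x,\, t\le T} \E|U_t^{(N)}(x)|^{2m} < \infty$. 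This uniformity is the one delicate bookkeeping point, and it follows from Theorem~\ref{thm:dis:mom:bd} applied to $\sigma^{(N)}$ since its Lipschitz constant does not depend on $N$ and the constants in that proof depend only on the Lipschitz constant, $T$, and $\|U_0\|_\infty$.

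Setting $D_t := \sup_{x\in\Z^d}\|U_t(x) - U_t^{(N)}(x)\|_m$ and writing $\Phi(r):=\sum_{y,z\in\Z^d} P_r(y)\mathcal{R}(y-z)P_r(z)$, the nonnegativity of $\mathcal{R}$ together with translation invariance collapse the inequality above to
\[
D_t^2 \lesssim \frac{1}{N^2}\int_0^t \Phi(t-s)\, ds + \int_0^t D_s^2\, \Phi(t-s)\, ds.
\]
By assumption \eqref{eq:ass:dex} we have $\int_0^T \Phi(r)\, dr < \infty$, so multiplying by $e^{-\beta t}$, taking $\sup_{t\le T}$, and choosing $\beta$ large enough that $\lip^2\int_0^T e^{-\beta r}\Phi(r)\,dr < \tfrac12$ (exactly as in the existence proof) absorbs the convolution term on the left and yields $D_t \lesssim N^{-1}$ uniformly on $[0,T]$. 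This gives the desired $L^m$-convergence, in fact with rate $1/N$. The main obstacle, as flagged above, is the uniform-in-$N$ moment estimate on $U^{(N)}$; everything else is a routine adaptation of the Picard/Gronwall scheme already established.
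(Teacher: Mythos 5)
Your proposal is correct and follows essentially the same route as the paper: the same decomposition of the difference into a Lipschitz term plus a truncation term, the same use of uniform-in-$N$ moment bounds (justified by the $N$-independent Lipschitz constant of $\sigma^{(N)}$) together with Chebyshev/Markov to make the truncation term $O(1/N)$, and the same exponentially weighted Gronwall argument to close the estimate. Your handling of the truncation error via Cauchy--Schwarz at order $2m$ is, if anything, slightly more careful than the paper's, but the argument is the same.
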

\begin{proof}
We begin by writing 
$U_t^{(N)}(x) - U_t(x) = \mathcal{T}_1+\mathcal{T}_2$,
where
\be\begin{split} \label{eq:t1t2}
\mathcal{T}_1&= \int_0^t \sum_y P_{t-s}(x-y)\cdot \big[\sigma^{(N)} \big(U_s^{(N)}(y)\big)-\sigma\big(U_s^{(N)}(y)\big)\big] \, \d B_s(y)\\
\mathcal{T}_2&= \int_0^t \sum_y P_{t-s}(x-y)\cdot \big[\sigma\big(U_s^{(N)}(y)\big)-\sigma\big(U_s(y)\big)\big] \, \d B_s(y).
\end{split}\ee
We begin with the first integral. From the definition of $\sigma^{(N)}$, we have 
\begin{align*}
\E\big[|\sigma^{(N)} \big(U_s^{(N)}(y)\big)-\sigma\big(U_s^{(N)}(y)\big)|^m\big] &=
\E\big[|\sigma^{(N)} \big(U_s^{(N)}(y)\big)-\sigma\big(U_s^{(N)}(y)\big)|^m1_{\{U_s^{(N)}(y)\ge N\}}\big]\\
&\lesssim N^m\P \big(\big| U_s^{(N)}(y)\big |\geq N\big) \lesssim\frac{1}{N^m},
\end{align*}
where we have used Chebyshev's inequality and uniform (in $N$) bounds on the moments of $U^{(N)}$. The existence of these uniform bounds are justified by the fact that the Lipschitz coefficients of $\sigma^{(N)}$ are bounded above by a constant independent of $N$. An application of Burkholder's inequality gives
\begin{align*}
\|\mathcal{T}_1\|_m^2 & \lesssim \int_0^t  \sum_{y, z} P_{t-s}(x-y) \mathcal{R}(y-z) P_{t-s}(x-z) \left\|\sigma^{(N)} \big(U_s^{(N)}(y)\big)-\sigma\big(U_s^{(N)}(y)\big)\right\|_m^2\,\d s\\
&\lesssim\frac{1}{N^2} \int_0^t  \sum_{y, z} P_{t-s}(y) \mathcal{R}(y-z) P_{t-s}(z)\,\d s\\
&\lesssim\frac{1}{N^2}.
\end{align*}
Let
\bes
\mathscr{D}(t) := \sup_{y\in \Z^d}\big\|U_t^{(N)}(y) - U_t(y)\big\|_m^2.
\ees
We therefore obtain from equation \eqref{eq:t1t2}
\bes
\mathscr{D}(t) \le \frac{c_1}{N^2} + c_1\int_0^t \d s\,  \mathscr{D}(s) \,  \sum_{y,z}  P_{t-s}(y) \mathcal{R}(y-z) P_{t-s}(z),
\ees
for some constant $c_1$. We multiply each side of the above inequality by $e^{-\eta t}$ to obtain
\begin{align*}
e^{-\eta t}\mathscr{D}(t) &\le \frac{c_1e^{-\eta t}}{N^2} + c_1\int_0^t   e^{-\eta s}\mathscr{D}(s) \, e^{-\eta (t-s)} \sum_{y,z\in \Z^d}  P_{t-s}(y) \mathcal{R}(y-z) P_{t-s}(z)\,\d s\\
&\le\frac{c_1}{N^2}+c_1\left(\sup_{r>0}e^{-\eta r}\mathscr{D}(r)\right)\int_0^te^{-\eta s} \sum_{y,z}  P_s(y) \mathcal{R}(y-z) P_{s}(z)\,\d s.
\end{align*}
We now choose $\eta>0$ large enough so that 
\begin{align*}
c_1\int_0^te^{-\eta s} \sum_{y,z\in \Z^d}  P_s(y) \mathcal{R}(y-z) P_{s}(z)\,\d s\leq \frac{1}{2},
\end{align*}
and thus obtain
\begin{align*}
\sup_{t>0}e^{-\eta t}\mathscr{D}(t)\leq \frac{c_1}{N^2}+\frac{1}{2}\sup_{t>0}e^{-\eta t}\mathscr{D}(t),
\end{align*}
Some computations finish the proof of the proposition.
\end{proof}
Our second approximation result allows us to consider smooth $\sigma$. Let $\phi \in \mathbf{C}_c^\infty\big((0,1)\big)$ with $\int_{\R} \phi(x) \d x =1$ and by a slight abuse of notation, set
\bes
\sigma^{(N)}(x) := N \int_{\R} \d y \, \phi\big(N(y-x)\big) \sigma(y).
\ees
\begin{proposition} \label{prop:sigma:smoo} 
Consider \eqref{eq:disc:she} but with $\sigma^{(N)}$ instead of  $\sigma$ and let $U^{(N)}$ be its unique solution. Then $\lim_{N\to \infty} U^{(N)}_t(x)=U_t(x)$ in $L^m(\P)$ for every $m\ge 2$.
\end{proposition}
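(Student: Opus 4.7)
The plan is to follow essentially the same strategy as in Proposition~\ref{prop:sigma:comp}, with the Chebyshev tail argument replaced by a direct uniform bound on $\sigma^{(N)}-\sigma$.

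First I would record two elementary properties of the mollification. The change of variables $u=N(y-x)$ gives $\sigma^{(N)}(x)=\int_{\R}\phi(u)\sigma(x+u/N)\,\d u$, from which (a) $\sigma^{(N)}$ is Lipschitz with the same constant $\lip$ as $\sigma$, uniformly in $N$, and (b) the Lipschitz property of $\sigma$ yields
\bes
\|\sigma^{(N)}-\sigma\|_\infty \;\le\; \int_{\R}\phi(u)\,\big|\sigma(x+u/N)-\sigma(x)\big|\,\d u \;\le\; \frac{\lip}{N}.
\ees
Property (a), combined with Theorem~\ref{thm:dis:mom:bd}, provides moment bounds on $U^{(N)}$ that are uniform in $N$, so that all estimates of the form $\E|U^{(N)}_s(y)|^m \le C$ are available.

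Next I would decompose $U_t^{(N)}(x)-U_t(x)=\mathcal{T}_1+\mathcal{T}_2$ exactly as in \eqref{eq:t1t2}, where $\mathcal{T}_1$ contains the difference $\sigma^{(N)}(U_s^{(N)})-\sigma(U_s^{(N)})$ and $\mathcal{T}_2$ contains $\sigma(U_s^{(N)})-\sigma(U_s)$. Applying Burkholder's inequality in the correlated-noise form already used in Proposition~\ref{prop:sigma:comp}, together with~(b), gives
\bes
\|\mathcal{T}_1\|_m^2 \;\lesssim\; \frac{1}{N^2}\int_0^t \sum_{y,z} P_{t-s}(y)\,\mathcal{R}(y-z)\,P_{t-s}(z)\,\d s \;\lesssim\; \frac{1}{N^2},
\ees
where the last step uses~\eqref{eq:ass:dex}. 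For $\mathcal{T}_2$, the Lipschitz property of $\sigma$ yields the familiar bound
\bes
\|\mathcal{T}_2\|_m^2 \;\lesssim\; \int_0^t\Big(\sum_{y,z} P_{t-s}(y)\,\mathcal{R}(y-z)\,P_{t-s}(z)\Big)\mathscr{D}(s)\,\d s,
\ees
where $\mathscr{D}(s):=\sup_{y\in \Z^d}\|U_s^{(N)}(y)-U_s(y)\|_m^2$.

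The proof would then be closed exactly as in Proposition~\ref{prop:sigma:comp}: multiply both sides by $e^{-\eta t}$ and choose $\eta$ large enough (possible by~\eqref{eq:ass:dex}) that the weighted convolution integral is at most $\frac{1}{2}$, giving $\sup_{t\ge 0}e^{-\eta t}\mathscr{D}(t)\lesssim 1/N^2$, hence the desired $L^m$ convergence on any bounded time interval. The only mildly delicate step is the initial verification that mollification preserves the Lipschitz constant and delivers the $1/N$ sup-norm error; once this is in hand, the argument is in fact somewhat simpler than the one for Proposition~\ref{prop:sigma:comp}, because the uniform bound~(b) bypasses the need for a Chebyshev tail estimate on $\{|U^{(N)}_s(y)|\ge N\}$.
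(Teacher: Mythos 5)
Your proposal is correct and follows essentially the same route as the paper: the key step in both is the uniform bound $|\sigma^{(N)}(x)-\sigma(x)|\lesssim 1/N$ coming from the Lipschitz property of $\sigma$ and the definition of the mollifier, after which the decomposition into $\mathcal{T}_1,\mathcal{T}_2$ and the exponentially weighted Gronwall argument are identical to Proposition~\ref{prop:sigma:comp}. Your explicit remarks that the mollification preserves the Lipschitz constant (hence uniform-in-$N$ moment bounds) are left implicit in the paper but are the right things to check.
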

\begin{proof}
We adopt the same notation as in the proof of the previous result. We begin by making an observation which follows from the definition of $\sigma^{(N)}$.  We have
\begin{align*}
\sigma^{(N)}(x)-\sigma(x)&=N\int_\R\phi(N(y-x))(\sigma(y)-\sigma(x))\,\d y\\
&\lesssim\frac{1}{N},
\end{align*}
where we have used the definition of $\phi$ and the fact that $\sigma$ is Lipschitz. The proof is now exactly the same as that of Proposition \ref{prop:sigma:comp} except for the following where we make use of the above inequality,
\bes\begin{split}
\|\mathcal{T}_1\|_m^2  &\le c_2 \int_0^t \|\sigma^{(N)} \big(U_s^{(N)}(y)\big)-\sigma\big(U_s^{(N)}(y)\big)\|_m^2 \sum_{y, z} P_{t-s}(x-y) \mathcal{R}(y-z) P_{t-s}(x-z) \d s  \\
& \lesssim\frac{1}{N^2}\int_0^t \d s \, \sum_{y, z} P_{s}(y) \mathcal{R}(y-z) P_{s}(z) \lesssim \frac{1}{N^2}.
\end{split}\ees
The rest of the proof is similar to that of Proposition \ref{prop:sigma:comp}.
\end{proof}

Our final approximation concerns the state space. We consider a system of interacting SDEs on the space $K_N:=[-N,N]^d$. More precisely, let $U^{(N)}$ solve
\be\label{eq:disc:torus}
\d U^{(N)}_t(x) = \big(\mathscr{L}^{(N)}U^{(N)}_t\big)(x)\, \d t + \sigma\big(U^{(N)}_t(x)\big)\, \d B_t(x), \quad x\in K_N
\ee
where
\bes
\mathscr{L}^{(N)} = \nu \sum_{i,j\in K_N} \big(p^{(N)}_{i,j } - \delta_{i,j}\big)z_j \frac{\partial}{\partial z_i},\qquad p_{i,j}^{(N)} = \sum_{k \in \Z^d} p_{i, j+(2N+1)k}, \quad i, j \in K_N.
\ees
The correlations of the Brownian motions are as before \eqref{eq:bm:corr}. Note that $\mathscr{L}^{(N)}$ is the generator of a random walk $X^{(N)}$ which takes values on the discrete torus $[-N,N]^d$. We
shall denote the transition probability by $P^{(N)}_t(y):= P(X_t^{(N)}=-y),\, y \in K_N$.
\begin{proposition}\label{prop:i_set:fin} Let $U^{(N)}$ solve \eqref{eq:disc:torus}. Then $\lim_{N\to \infty} U_t^{(N)}(x)= U_t(x)$ in $L^m(\P)$ for every $m\ge 2$ and any $x\in \bZ^d$. 
\end{proposition}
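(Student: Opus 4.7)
The plan is to mirror the strategy of Propositions \ref{prop:sigma:comp} and \ref{prop:sigma:smoo}: decompose the difference $U_t^{(N)}(x)-U_t(x)$ into error terms that vanish in $L^m(\P)$ plus a stochastic term that is closed by a Gronwall argument. Fix $x\in\Z^d$ and $t>0$; for $N$ large enough that $x\in K_N$, the identity
\begin{align*}
P_s^{(N)}(y)=\sum_{k\in\Z^d}P_s\bigl(y+(2N+1)k\bigr),\qquad y\in K_N,
\end{align*}
gives $\sum_{y\in K_N}\bigl|P_s^{(N)}(y)-P_s(y)\bigr|=\sum_{y\notin K_N}P_s(y)\to 0$. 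Using this, I would split
\begin{align*}
U_t^{(N)}(x)-U_t(x)=\mathcal{T}_0+\mathcal{T}_1+\mathcal{T}_2-\mathcal{T}_3,
\end{align*}
where $\mathcal{T}_0=(P_t^{(N)}\!*U_0)(x)-(P_t*U_0)(x)$, $\mathcal{T}_1$ is the stochastic integral over $y\in K_N$ with the kernel difference $P_{t-s}^{(N)}-P_{t-s}$ weighting $\sigma(U^{(N)})$, $\mathcal{T}_2$ is the stochastic integral over $y\in K_N$ with $P_{t-s}$ weighting $\sigma(U^{(N)})-\sigma(U)$, and $\mathcal{T}_3$ is the stochastic integral over $y\notin K_N$ of $P_{t-s}\sigma(U)$.

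The three boundary terms $\mathcal{T}_0,\mathcal{T}_1,\mathcal{T}_3$ all go to zero in $L^m(\P)$ at the fixed point $x$. Boundedness of $U_0$ gives $|\mathcal{T}_0|\le 2\|U_0\|_\infty\sum_{y\notin K_N}P_t(x-y)\to 0$. Burkholder's inequality (as in Theorem \ref{thm:dis:mom:bd}) together with the Lipschitz condition on $\sigma$ and the moment bound \eqref{eq:dis:mom:bd} give
\begin{align*}
\|\mathcal{T}_3\|_m^2\lesssim\int_0^t\sum_{y,z\notin K_N}P_{t-s}(x-y)\mathcal{R}(y-z)P_{t-s}(x-z)\,\d s,
\end{align*}
which vanishes by dominated convergence with \eqref{eq:ass:dex} providing the integrable majorant. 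The analogous estimate for $\|\mathcal{T}_1\|_m^2$ has integrand $\bigl[P_{t-s}^{(N)}-P_{t-s}\bigr](x-y)\bigl[P_{t-s}^{(N)}-P_{t-s}\bigr](x-z)\mathcal{R}(y-z)$; expanding the square and applying the torus-to-lattice translation identity
\begin{align*}
\sum_{y,z\in K_N}P_s^{(N)}(y)P_s^{(N)}(z)\mathcal{R}(y-z)=\sum_{y,z\in\Z^d}P_s(y)P_s(z)\mathcal{R}^{(N)}(y-z),
\end{align*}
with $\mathcal{R}^{(N)}(w):=\sum_\ell\mathcal{R}(w-(2N+1)\ell)$, one sees that the integrand tends to zero pointwise (only $\ell=0$ survives) and admits an integrable majorant built from $P_s(y)P_s(z)\mathcal{R}(y-z)$, so dominated convergence again gives $\|\mathcal{T}_1\|_m^2\to 0$.

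The main obstacle is the Gronwall closure for $\mathcal{T}_2$: unlike in Propositions \ref{prop:sigma:comp}--\ref{prop:sigma:smoo}, the approximation $U^{(N)}\to U$ is not uniform in the spatial variable --- near the boundary of $K_N$ the error remains of order one --- so no global spatial supremum will close the loop. Instead I would derive the pointwise integral inequality
\begin{align*}
\phi_N(t,x)\le\xi_N(t,x)+c\int_0^t\sum_{u,v\in K_N}P_{t-s}(x-u)P_{t-s}(x-v)\mathcal{R}(u-v)\phi_N(s,u)\,\d s,
\end{align*}
where $\phi_N(s,y):=\|U_s^{(N)}(y)-U_s(y)\|_m^2$ and $\xi_N(t,x)$ collects the $L^m$-norms squared of $\mathcal{T}_0,\mathcal{T}_1,\mathcal{T}_3$; the bilinear term arises from Burkholder combined with $\phi_N(u)^{1/2}\phi_N(v)^{1/2}\le\tfrac12[\phi_N(u)+\phi_N(v)]$ and the $u\leftrightarrow v$ symmetry of $\mathcal{R}$. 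Let $\mathscr{G}_\eta$ denote the integral operator with kernel $e^{-\eta(t-s)}cP_{t-s}(x-u)\bigl[\sum_vP_{t-s}(x-v)\mathcal{R}(u-v)\bigr]$ acting on $L^\infty([0,T]\times\Z^d)$; by \eqref{eq:ass:dex}, $\|\mathscr{G}_\eta\mathbf{1}\|_\infty\le 1/2$ for $\eta$ large. Iterating $n$ times yields
\begin{align*}
e^{-\eta t}\phi_N(t,x)\le\sum_{k=0}^{n-1}\mathscr{G}_\eta^k\bigl(e^{-\eta\,\cdot}\xi_N\bigr)(t,x)+2^{-n}\sup_{s,y}e^{-\eta s}\phi_N(s,y).
\end{align*}
The last supremum is bounded in $N$ by the $L^m$-moment bound of (the torus analogue of) Theorem \ref{thm:dis:mom:bd}; given $\varepsilon>0$, pick $n$ making the remainder $<\varepsilon$, then invoke dominated convergence iteratively to show that each of the finitely many terms $\mathscr{G}_\eta^k(e^{-\eta\,\cdot}\xi_N)(t,x)$ vanishes as $N\to\infty$, using that $\xi_N(s,y)\to 0$ pointwise and is uniformly bounded. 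Letting $\varepsilon\downarrow 0$ completes the proof.
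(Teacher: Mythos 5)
Your decomposition of $U_t^{(N)}(x)-U_t(x)$ into a deterministic difference, a kernel-difference term on $K_N$, a $\sigma$-difference term on $K_N$, and a tail term over $K_N^c$ is exactly the paper's, and your treatment of the three error terms (weak convergence of $P^{(N)}_t$ to $P_t$, dominated convergence via \eqref{eq:ass:dex}, and the wrap-around bound $\P(X_s\notin K_N)$) matches the paper's as well. Where you genuinely diverge is the closing step. The paper sets $\mathscr{D}(t):=\sup_{y}\|U_t^{(N)}(y)-U_t(y)\|_m^2$, asserts that the error terms tend to zero uniformly in the spatial variable, and then runs the same exponential-weight Gronwall argument as in Proposition \ref{prop:sigma:comp}. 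You instead observe that the error terms are \emph{not} uniformly small --- for $x$ near the boundary of $K_N$ the quantity $\P_x(X_t\notin K_N)$ is of order one, so for instance $\sup_x|(P_t^{(N)}*U_0)(x)-(P_t*U_0)(x)|$ need not vanish --- and you replace the supremum by a pointwise integral inequality, iterating the operator $\mathscr{G}_\eta$ finitely many times, controlling the remainder geometrically via the $N$-uniform moment bounds, and applying dominated convergence to each of the finitely many iterates. This concern is legitimate and your fix is sound; it buys a proof that does not rely on a uniform-in-space convergence claim, at the cost of a slightly heavier operator-iteration argument. Two small points to tidy up: your ``torus-to-lattice translation identity'' for the $\mathcal{T}_1$ estimate is really an approximate identity (for fixed $y',z'$ the shift $(2N+1)(k-l)$ is determined rather than summed over, and one should use $|\mathcal{R}(w)|\le\mathcal{R}(0)$ together with the fact that mismatched shifts force one of $y',z'$ outside $K_N$, which is essentially the paper's bound $\mathcal{R}(0)\,\P(X_s\notin K_N)^2$); and you should note that $\xi_N(s,y)\to 0$ only pointwise in $y$, which is exactly what your dominated-convergence step uses, so the argument is consistent.
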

\begin{proof} We get that $U_t^{(N)}(x)- U_t(x) = \mathcal{T}_1+\mathcal{T}_2+\mathcal{T}_3+\mathcal{T}_4$, where
\bes\begin{split}
\mathcal{T}_1 &= \big(P_t^{(N)}* U_0\big) (x) - \big(P_t* U_0\big)(x), \\
\mathcal{T}_2 &= \sum_{y\in K_N}\int_0^t P_{t-s}(x-y) \cdot \Big(\sigma\big(U^{(N)}_s(y)\big)-\sigma\big(U_s(y)\big)\Big)\,\d B_s(y) ,\\
\mathcal{T}_3&= \sum_{y\in K_N} \int_0^t \Big( P^{(N)}_{t-s}(x-y) -P_{t-s}(x-y)\Big) \cdot \sigma\big(U^{(N)}_s(y)\big) \, \d B_s(y) ,\\
\mathcal{T}_4 &=  -\sum_{y\in K_N^c} \int_0^t P_{t-s}(x-y) \cdot \sigma\big(U_s(y)\big) \, \d B_s(y).
\end{split}\ees
As before let us call
\bes
\mathscr{D}(t) :=\sup_{y\in \Z^d}\big\|U_t(y)-U_t^{(N)}(y)\big\|_m^2.
\ees
We get
\be\begin{split} \label{eq:dt:bd}
\mathscr{D}(t) &\le C\int_0^t \d s\, \mathscr{D}(s) \sum_{y,z\in K_N}P_{t-s}(y) \mathcal{R}(y-z) P_{t-s}(z) \\
&\quad  + \sup_{x\in \Z^d} \big\vert\big( P_t^{(N)}*U_0\big) (x) - \big(P_t*U_0\big) (x)\big\vert^2  + C \int_0^t  \d s\,\sum_{y,z\in K_N^c} P_{s}(y)\mathcal{R}(y-z) P_{s}(z) \\
&\quad + C \int_0^t \d s\, \sum_{y,z\in K_N} \big(P_s^{(N)}(y)-P_{s}(y)\big)\mathcal{R}(y-z) \big(P^{(N)}_{s}(z)-P_s(z)\big).
\end{split}\ee
The proof of Theorem \ref{thm:dis:mom:bd} shows that one can find a bound on $\|U_t^{(N)}(x)\|_m$ for $t\le T,\, x\in \Z^d$ which is independent of $N$. We have used this fact in the above the inequality. The second term goes to $0$ as $N \to \infty$ since the $P_t^{(N)}$ converges weakly to $P_t$. The third term converges to $0$
by the dominated convergence theorem. As for the last term, one bounds it by
 \bes \begin{split}
 & \int_0^t \d s \sum_{y, z \in K_N} \Big\lbrace \sum_{i\ne 0,\, i \in \Z^d} P_s\big(y +(2N+1) i\big)\Big\rbrace \cdot \mathcal{R}(y-z)  \cdot \Big\lbrace \sum_{j\ne 0,\,  j\in \Z^d} P_s\big(z+(2N+1) j\big) \Big \rbrace \\
& \le \mathcal{R}(0) \int_0^t \d s \;\P \big(X_s \text{ falls outside } K_N\big)^2,
\end{split} \ees
which tends to $0$ as $N$ tends to infinity. We thus have
\bes
\mathscr{D}(t) \le \mathcal{A}(N) + C\int_0^t \d s\, \mathscr{D}(s) \sum_{y,z\in \Z^d} P_{t-s}(y) \mathcal{R}(y-z) P_{t-s}(z)
\ees
for some function $\mathcal{A}(N)$  decreasing to $0$. By an argument similar to that used in the proof of
Proposition \ref{prop:sigma:comp} one concludes $\mathscr{D}(t) \to 0$. This completes the proof.
\end{proof}

Before we proceed we mention that a comparison result similar to Theorem \ref{thm:comp} holds for a system of interacting SDEs.  Indeed, Theorem 1.1 in \cite{geis-mant} implies that the comparison principle holds for finite dimensional
stochastic differential equations (SDEs) of the form \eqref{eq:disc:torus}. For the infinite dimensional case we can use the above proposition and the continuity of the solution. We need the comparison result to guarantee that the solution remains nonnegative 
provided that the initial profile is nonnegative and $\sigma(0)=0$. Let us now turn to the proof of the main result of this section.

\begin{proof}[Proof of Theorem \ref{thm:mom:disc:she}]The proof of the theorem follows the same strategy as in \cite{cox-flei-grev}. We therefore only mention the key points and leave it for the reader to check the details in \cite{cox-flei-grev}. We begin by proving the result under some simplifications. We assume that $\sigma$ is smooth and has support in $(a,\,b)$ with $0<a<b$. The system of SDEs is taken to be finite, that is we restrict $x\in K$ where $K$ is a finite set as defined in the above proposition. 

Let $S^{\sigma}$ denote  the strongly continuous contraction semigroup associated with the solution to the SDE with a particular diffusion coefficient $\sigma$. Also, let $G^\sigma$ be the corresponding generator given by
\bes
G^\sigma := \mathscr{L}  + \frac{1}{2}\sum_{i,j \in K} \sigma(z_i) \mathcal{R}(i-j) \sigma(z_j) \frac{\partial^2}{\partial z_i \partial z_j},\quad z\in [a,\,b]^{k},
\ees
where $k$ is the number of elements in $K$ and 
\begin{equation*}
\mathscr{L}:=\nu \sum_{i,j} (p_{i,j} - \delta_{i,j})z_j \frac{\partial}{\partial z_i}.
\end{equation*}
Consider the function $F_1$ of the form $F_1(z)=z_1^{n_1}z_2^{n_2}\cdots z_k^{n_k}$. As mentioned at the beginning of this section, we will show that 
\be \label{eq:comp:n=1}
S_{t_1}^{\sigma_1} F_1(z) \ge S_{t_1}^{\sigma_2} F_1(z),
\ee
by using the following formula,
\bes
S_{t_1}^{\sigma_1} F_1(z) - S_{t_2}^{\sigma_2} F_1(z) = \int_0^{t_1}S_{t_1-s}^{\sigma_2} \big(G^{\sigma_1}-G^{\sigma_2}\big) S_s^{\sigma_1}F_1(z)\,\d s.
\ees
That the right hand side of the above display is well defined follows from the proof of Lemma 15 of \cite{cox-flei-grev}. By a convexity argument as in the proofs of Propositions 16 and 17 of \cite{cox-flei-grev}, we have $\frac{\partial^2 }{\partial z_i \partial z_j} S_s^{\sigma_1}F_1 \ge 0$. Now since $\sigma_1\geq \sigma_2$ and 
\bes
 G^{\sigma_1}-G^{\sigma_2} =\frac{1}{2}\sum_{i,j \in K}  \mathcal{R}(i-j)\cdot \big[\sigma_1(z_i)\sigma_1(z_j)-\sigma_2(z_i)\sigma_2(z_j)\big] \frac{\partial^2}{\partial z_i \partial z_j}, 
\ees
we have 
\bes
\big(G^{\sigma_1}-G^{\sigma_2}\big) S_{s}^{\sigma_1} F_1(z)\ge 0,\quad 0\le s\le t_1.
\ees
We have thus proved \eqref{eq:comp:n=1}. Denote by $F_2(z)$ another function which is of the same form as $F_1(z)$. Using the Markov property we have for $t_1<t_2$
\begin{align*}
\E^{\sigma_1}_zF_1(U_{t_1}(\cdot))F_2(U_{t_2}(\cdot))&=
\E^{\sigma_1}_z\left[F_1(U_{t_1}(\cdot))\E^{\sigma_1}_{U_{t_1}(\cdot)}F_2(U_{t_2-t_1}(\cdot))\right]\\
&\geq \E^{\sigma_1}_z\left[F_1(U_{t_1}(\cdot))\E^{\sigma_2}_{U_{t_1}(\cdot)}F_2(U_{t_2-t_1}(\cdot))\right]\\
&\geq \E^{\sigma_2}_z\left[F_1(U_{t_1}(\cdot))\E^{\sigma_2}_{U_{t_1}(\cdot)}F_2(U_{t_2-t_1}(\cdot))\right]\\
&=\E^{\sigma_2}_z\left[F_1(U_{t_1}(\cdot))F_2(U_{t_2}(\cdot))\right],
\end{align*}
For the second last step we need \eqref{eq:comp:n=1} with $F_1$ replaced by
\bes
F^1(z)=F_1(z) \E_z^{\sigma_1}\left(F_2\big(U_{t_2-t_1}(\cdot)\big)\right).
\ees
See Proposition 17 in \cite{cox-flei-grev} for details. An induction argument shows 
\begin{align*}
\E^{\sigma_1}_zF_1(U_{t_1}(\cdot))F_2(U_{t_2}(\cdot))\cdots F_n(U_{t_n}(\cdot))\geq \E^{\sigma_2}_zF_1(U_{t_1}(\cdot))F_2(U_{t_2}(\cdot))\cdots F_n(U_{t_n}(\cdot))
\end{align*}
for any $n\geq 1$ with $F_i(z)$ chosen to have the same form as $F_1(z)$. This completes the proof under the simplifications which we remove by using Propositions \ref{prop:sigma:comp}, \ref{prop:sigma:smoo} and \ref{prop:i_set:fin}. See \cite{cox-flei-grev} for more details.
\end{proof}

%

\section{The fractional Laplacian and the approximations of the stable process} \label{sec:approx:stable}

In this section, we prove a {\it local limit theorem} which will be needed in the proof of Theorem \ref{thm:main}. We begin with a few important observations about $p_t(x)$, the heat kernel for the fractional Laplacian $-\nu (-\Delta)^{\alpha/2}$. 
\begin{itemize} 
\item  For any positive constant $c$, we have 
\begin{align}\label{scaling}
p_t(x)=c^dp_{c^\alpha t}(cx).
\end{align}
\item For $0<\alpha<2$, there exist positive constants $c_1$ and $c_2$ such that  
\begin{align}\label{heat-estimate}
c_1\left(\frac{1}{t^{d/\alpha}}\wedge\frac{t}{|x|^{d+\alpha}}\right)\leq p_t(x)\leq c_2\left(\frac{1}{t^{d/\alpha}}\wedge\frac{t}{|x|^{d+\alpha}}\right).
\end{align}
\end{itemize}
Note that the second property does not hold for $\alpha=2$. The scaling property follows from 
\begin{equation*}
p_t(x)=(2\pi)^{-d}\int_{\R^d}e^{-ix\cdot z}e^{-t\nu|z|^\alpha}\,\d z,
\end{equation*}
while the above two sided estimates are well known. See \cite{kolo} and references therein for various extensions. We will need the following straightforward consequence of \eqref{heat-estimate}. It also holds in the case $\alpha=2$.
\begin{lemma}\label{scale}
Fix $k \in \Z^d$. For all $z \in  C^{(\epsilon)}(\epsilon k)=\{x:=\epsilon k+y; -\epsilon/2\leq y_i< \epsilon/2 \}$ and $t\ge \epsilon^\alpha$, we have
\begin{equation*}
\frac{1}{c_1}p_t(z)\le p_t(\epsilon k)\leq c_1p_{t}(z),
\end{equation*}
where $c_1$ is some constant independent of $k$.
\end{lemma}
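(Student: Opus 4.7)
The plan is to deduce the lemma from the two-sided bound \eqref{heat-estimate} by a short case analysis based on the size of $|\epsilon k|$. Writing $w=z-\epsilon k$, the cell condition $z\in C^{(\epsilon)}(\epsilon k)$ gives $|w|\le \epsilon\sqrt{d}/2$, and the hypothesis $t\ge \epsilon^{\alpha}$ is equivalent to $\epsilon\le t^{1/\alpha}$. This is the key interplay: the diameter of the cell does not exceed the length scale $t^{1/\alpha}$ on which the heat kernel transitions from its flat plateau to its power-law tail, so the perturbation $z\mapsto \epsilon k$ cannot move the point across this transition.

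In the first regime, suppose $|\epsilon k|\le 2\epsilon\sqrt{d}$. Then by the triangle inequality $|z|\le 3\epsilon\sqrt{d}$, so that both $|\epsilon k|$ and $|z|$ are bounded by a constant multiple of $t^{1/\alpha}$. A direct computation shows that $t/|\epsilon k|^{d+\alpha}\gtrsim t^{-d/\alpha}$ and similarly for $z$, so the minimum appearing in \eqref{heat-estimate} is comparable to $t^{-d/\alpha}$ at both points. Consequently $p_t(\epsilon k)$ and $p_t(z)$ are both comparable to $t^{-d/\alpha}$, yielding the desired two-sided bound.

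In the complementary regime $|\epsilon k|>2\epsilon\sqrt{d}$, the cell is small relative to $|\epsilon k|$: we have $|w|\le \epsilon\sqrt{d}/2<|\epsilon k|/4$, hence $|z|\in [3|\epsilon k|/4,\,5|\epsilon k|/4]$ and in particular $|z|$ and $|\epsilon k|$ are comparable. Since the right-hand side of \eqref{heat-estimate} depends on the spatial variable only through its modulus, both the tail term $t/|\cdot|^{d+\alpha}$ and the resulting minimum with $t^{-d/\alpha}$ transfer from $\epsilon k$ to $z$ up to multiplicative constants, concluding this case.

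The only subtlety is making sure that the ``flat'' bound $t^{-d/\alpha}$ and the ``tail'' bound $t/|x|^{d+\alpha}$ do not swap roles between $\epsilon k$ and $z$; the hypothesis $t\ge\epsilon^{\alpha}$ is precisely what prevents this. For the parenthetical extension to $\alpha=2$ one carries out the same two-regime dichotomy directly for the Gaussian kernel, using the explicit bound $p_t(x)\asymp t^{-d/2}\exp(-|x|^2/(ct))$ in place of \eqref{heat-estimate} and controlling $|z|^2-|\epsilon k|^2=2\epsilon k\cdot w+|w|^2$ on the cell. I would not expect any single step to be a genuine obstacle; the proof is essentially a bookkeeping of how the scale $\epsilon$ interacts with the diffusive scale $t^{1/\alpha}$.
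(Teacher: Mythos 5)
Your argument for $0<\alpha<2$ is correct and is exactly the route the paper intends: the paper offers no proof at all, calling the lemma a ``straightforward consequence'' of \eqref{heat-estimate}, and your two-regime dichotomy (both points inside the diffusive ball of radius $\asymp t^{1/\alpha}$, where the minimum in \eqref{heat-estimate} is pinned to $t^{-d/\alpha}$; or $|z|\asymp|\epsilon k|$, where the right-hand side of \eqref{heat-estimate} transfers between the two points) is a complete verification. The constant you obtain is moreover uniform in $\epsilon$ and $t\ge\epsilon^\alpha$, not just in $k$, which is what the later applications (the bound on $u^{(\epsilon)}-v^{(\epsilon)}$ and on \eqref{eq:cov}) actually require.

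The one step I would push back on is the parenthetical about $\alpha=2$, which you dismiss as bookkeeping. For the Gaussian kernel the ratio $p_t(z)/p_t(\epsilon k)=\exp\bigl(-(2\epsilon k\cdot w+|w|^2)/(4\nu t)\bigr)$, and the cross term $\epsilon k\cdot w/t$ is \emph{not} uniformly bounded over the cell: taking $w$ antiparallel to $k$ with $|w|\asymp\epsilon$ gives a factor of order $\exp\bigl(c\,\epsilon^2|k|/t\bigr)$, which blows up as $|k|\to\infty$ for any fixed $\epsilon$ and $t$ (e.g.\ $t=\epsilon^2$). So a two-sided comparison with a constant independent of $k$ genuinely fails for $\alpha=2$; one can only salvage an inequality of the form $p_t(\epsilon k)\lesssim p_{ct}(z)$ with a dilated time (or equivalently a spatial dilation, as in the $p_t(x/2)$ appearing in Lemma \ref{ker-d}). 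This is really a flaw in the paper's own assertion that the lemma ``also holds in the case $\alpha=2$'', but since your proposal repeats that assertion and claims the same dichotomy handles it, you should either restrict the lemma to $\alpha<2$ or state the Gaussian version with the dilated kernel.
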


We will also need the following estimate. Results of this type are known \cite{kolo}; we give a simple proof using subordination.  Let $B_t$ be a $d$-dimensional Brownian motion and $T_t$ be a one-dimensional one-sided stable process of order $\alpha/2$    independent of $B_t$. Then, by subordination we have $Y_t=B_{T_t}$, where $Y_t$ is the strict stable process of order $\alpha$; see \cite{levy-davar}.
 
\begin{lemma}\label{ker-d}
For $x\in \R^d$ and $t>0$,
\begin{equation*}
\left |\nabla p_t(x) \right| \lesssim\frac{p_t(x/2)}{t^{1/\alpha}}.
\end{equation*}
\end{lemma}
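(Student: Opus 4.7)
The plan is to reduce to the scale $t=1$ via \eqref{scaling}, and then to prove the dimensionless inequality $|\nabla p_1(y)|\lesssim p_1(y/2)$ by exploiting the subordination hint. Setting $c=t^{-1/\alpha}$ in \eqref{scaling} yields $p_t(x) = t^{-d/\alpha} p_1(xt^{-1/\alpha})$, whence $\nabla p_t(x) = t^{-(d+1)/\alpha}(\nabla p_1)(xt^{-1/\alpha})$ while $t^{-1/\alpha}p_t(x/2) = t^{-(d+1)/\alpha}p_1(xt^{-1/\alpha}/2)$. With $y := xt^{-1/\alpha}$, the claim collapses to $|\nabla p_1(y)|\lesssim p_1(y/2)$ for every $y\in\R^d$.

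For this reduced inequality I will use $Y_1 = B_{T_1}$ to write $p_1(y) = \int_0^\infty g_s(y)\,\mu(ds)$, where $g_s(y) = (2\pi s)^{-d/2}e^{-|y|^2/(2s)}$ is the Gaussian heat kernel and $\mu$ is the law of $T_1$. The rapid spatial decay of $g_s$ legitimizes differentiation under the integral, producing
\begin{equation*}
|\nabla p_1(y)| \le \int_0^\infty \frac{|y|}{s}\,g_s(y)\,\mu(ds).
\end{equation*}
The key elementary observation is the factorization $g_s(y) = g_s(y/2)\,e^{-3|y|^2/(8s)}$ together with the two uniform bounds
\begin{equation*}
\frac{|y|}{s}\,e^{-3|y|^2/(8s)}\;\le\;\frac{C}{|y|},\qquad \frac{|y|}{s}\,e^{-3|y|^2/(8s)}\;\le\;\frac{C}{\sqrt s},
\end{equation*}
which follow by writing the left-hand side as $|y|^{-1}\cdot ve^{-3v/8}$ with $v=|y|^2/s$ (respectively $s^{-1/2}\cdot ue^{-3u^2/8}$ with $u=|y|/\sqrt s$) and maximizing over the one dummy variable.

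I will finish by splitting on the size of $|y|$. When $|y|\ge 1$ the first bound gives $|\nabla p_1(y)| \le C|y|^{-1}\int g_s(y/2)\,\mu(ds) = C|y|^{-1}p_1(y/2) \le C\,p_1(y/2)$. When $|y|\le 1$ the second bound together with $g_s(y/2)\le (2\pi s)^{-d/2}$ gives $|\nabla p_1(y)| \le C\int s^{-(d+1)/2}\,\mu(ds) = C\,\E[T_1^{-(d+1)/2}]$; this moment is finite because the $(\alpha/2)$-stable subordinator has a density that is exponentially small near the origin, so all negative moments of $T_1$ exist. Combined with the lower bound from \eqref{heat-estimate} at $t=1$, which gives $p_1(y/2)\ge c>0$ uniformly on $\{|y|\le 1\}$, this closes the bounded regime. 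The main obstacle is precisely this bridging step: the natural subordination estimate carries a ``wrong'' $s^{-1/2}$ factor, and the two-bound split above is the device that turns it into a bound purely in terms of $p_1(y/2)$.
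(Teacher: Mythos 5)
Your proof is correct and follows essentially the same route as the paper: subordination $p_1=\int_0^\infty g_s(\cdot)\,q_1(s)\,\d s$, differentiation under the integral, absorption of the factor $|y|/s$ at the cost of weakening the Gaussian exponent from $\tfrac12$ to $\tfrac18$, finiteness of $\int_0^1 s^{-(d+1)/2}q_1(s)\,\d s$ via the rapid decay of the subordinator density at $0$, and a final scaling step. The only (cosmetic) difference is that you split on $|y|\lessgtr 1$ where the paper splits the subordination integral at $s=1$; a small bonus of your version is that the needed lower bound on $p_1(y/2)$ is only its positivity on the unit ball, rather than the global comparison $e^{-|y|^2/8}\lesssim p_1(y/2)$ used in the paper.
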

\begin{proof}
Let $q_t(\cdot)$ denote the probability density function of $T_t$.  Then by subordination, we have 
\begin{align*}
p_t(x)=\int_0^\infty \frac{1}{(2\pi s)^{d/2}}e^{-\frac{|x|^2}{2s}}q_t(s)\,\d s.
\end{align*}
A simple computation shows that
\begin{align*}
\left|\nabla p_1(x)\right|&\lesssim\int_0^\infty \frac{1}{(2\pi s)^{d/2}}\frac{|x|}{s}e^{-\frac{|x|^2}{2s}}q_1(s)\,\d s\\
&\lesssim\int_0^\infty \frac{1}{s^{(d+1)/2}}e^{-\frac{|x|^2}{8s}}q_1(s)\,\d s\\
&\lesssim\left[\int_0^1 \frac{1}{s^{(d+1)/2}}e^{-\frac{|x|^2}{8s}}q_1(s)\,\d s +\int_1^\infty \frac{1}{s^{(d+1)/2}}e^{-\frac{|x|^2}{8s}}q_1(s)\,\d s\right].
\end{align*}
We estimate the second integral,
\begin{align*}
\int_1^\infty \frac{1}{s^{(d+1)/2}}e^{-\frac{|x|^2}{8s}}q_1(s)\,\d s&\leq \int_0^\infty \frac{1}{s^{d/2}}e^{-\frac{|x|^2}{8s}}q_1(s)\,\d s\\
&=c_1p_1(x/2).
\end{align*}
For the first integral, we have
\begin{align*}
\int_0^1 \frac{1}{s^{(d+1)/2}}e^{-\frac{|x|^2}{8s}}q_1(s)\,\d s&\leq e^{-\frac{|x|^2}{8}}\int_0^1 \frac{1}{s^{(d+1)/2}}q_1(s)\,\d s.
\end{align*}
By Theorem XIII.6.1 of \cite{feller}, as $ s\to 0$, $q_1(s)$ decays faster than any polynomial of $s$. Therefore the integral on the right is finite. Further for $\alpha<2$, the density $p_1(x)$ only decays polynomially in $x$ and therefore it is clear that for all $\alpha \le 2$ we have $ e^{-
|x|^2/8} \lesssim p_1(x/2)$. Combining the above inequalities gives $\left|\nabla p_1(x)\right|\lesssim p_1(x/2)$, from which we obtain the result by scaling.
\end{proof}

Our first local limit theorem gives a uniform (in $x$) bound on the difference between the scaled transition probabities
of the random walk and the heat kernel for the stable process. This is an improvement of Proposition 3.1 in \cite{jose-khos-muel}.
\begin{proposition} \label{prop:unif}
Fix $T>0$. Then under Assumption \ref{cond1},
\begin{equation*}
\sup_{x \in \epsilon \Z^d}\left|\frac{1}{\epsilon^d}P(\epsilon X_{t/\epsilon^\alpha}=x)-p_t(x)\right|\lesssim \frac{\epsilon^a}{t^{(d+a)/\alpha}}\quad\text{for}\quad x\in \epsilon \bZ^d 
\end{equation*}
uniformly for $\epsilon^\alpha\le t\le T$.
\end{proposition}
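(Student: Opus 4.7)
My plan is to compare the two densities via Fourier inversion, i.e. to write
\bes
\frac{1}{\epsilon^d}\P(\epsilon X_{t/\epsilon^\alpha}=x) = \frac{1}{(2\pi)^d}\int_{\T^d/\epsilon} e^{-ix\cdot w}\,\exp\!\Big(-\tfrac{t}{\epsilon^\alpha}\bigl(1-\hat\mu(\epsilon w)\bigr)\Big)\,\d w,
\ees
while $p_t(x) = (2\pi)^{-d}\int_{\R^d} e^{-ix\cdot w} e^{-t\nu|w|^\alpha}\,\d w$. The plan is then to subtract these two integrals, split the domain into a ``small-$w$'' region $|w|\le \delta/\epsilon$ (with $\delta>0$ small) and a ``large-$w$'' region which includes the tail of the stable integral outside $\T^d/\epsilon$, and estimate each piece separately. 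The target bound $\epsilon^a/t^{(d+a)/\alpha}$ is uniform in $x$, so I can afford to take absolute values inside the integrals and ignore the oscillating factor $e^{-ix\cdot w}$.

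On the small-$w$ region I use Assumption \ref{cond1} in the form $1-\hat\mu(\epsilon w) = \nu \epsilon^\alpha|w|^\alpha - \mathcal{D}(\epsilon w)$ with $|\mathcal{D}(\epsilon w)|\lesssim (\epsilon|w|)^{\alpha+a}$. Setting $u=(t/\epsilon^\alpha)(1-\hat\mu(\epsilon w))$ and $v=t\nu|w|^\alpha$, one has $|u-v|\lesssim t\epsilon^a|w|^{\alpha+a}$; by choosing $\delta$ small enough I can ensure $|u-v|\le v/2$ so that $\min(u,v)\ge v/2$, and then use $|e^{-u}-e^{-v}|\le |u-v|e^{-\min(u,v)}$. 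Substituting $w=t^{-1/\alpha}\xi$ reduces the resulting integral to
\bes
\epsilon^a\, t\cdot t^{-(\alpha+a+d)/\alpha}\int_{\R^d} |\xi|^{\alpha+a}\,e^{-\nu|\xi|^\alpha/2}\,\d\xi \;=\; C\,\frac{\epsilon^a}{t^{(d+a)/\alpha}},
\ees
which is exactly the advertised rate.

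For the complementary regions I need two ingredients. First, since $\{z\in\T^d:\hat\mu(z)=1\}=\{0\}$ and $\hat\mu$ is continuous, there exists $\eta>0$ with $1-\Re\hat\mu(z)\ge \eta$ on $\{|z|\ge \delta\}\cap\T^d$; hence on $\{\delta/\epsilon\le |w|\}\cap(\T^d/\epsilon)$ the integrand $|\exp(-(t/\epsilon^\alpha)(1-\hat\mu(\epsilon w)))|$ is bounded by $e^{-\eta t/\epsilon^\alpha}$, and the volume of this region is $\lesssim \epsilon^{-d}$. Second, for the tail of the stable density on $|w|\ge \pi/\epsilon$ I bound $\int_{|w|\ge \pi/\epsilon} e^{-t\nu|w|^\alpha}\d w$ by substituting $\xi=t^{1/\alpha}w$ and using the crude bound $e^{-\nu|\xi|^\alpha}\le e^{-\nu|\xi|^\alpha/2}e^{-\nu (\pi t^{1/\alpha}/\epsilon)^\alpha/2}$, giving exponential decay in $t/\epsilon^\alpha$. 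Both error terms are therefore of the form $C\,\epsilon^{-d}\,e^{-c\, t/\epsilon^\alpha}$, and using $t\ge \epsilon^\alpha$ together with the elementary inequality $s^{(d+a)/\alpha}e^{-cs}\lesssim 1$ for $s\ge 1$ (applied to $s=t/\epsilon^\alpha$), this is absorbed into $C\,\epsilon^a/t^{(d+a)/\alpha}$.

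The main obstacle I anticipate is the bookkeeping in the small-$w$ region: one must be careful that $\delta$ can be chosen uniformly in $w$ (and independent of $\epsilon,t$) so that $u\ge v/2$ holds, and the resulting scaling argument must produce exactly the exponent $(d+a)/\alpha$ rather than a loss. The tail estimates are essentially automatic given Assumption \ref{cond1} and the constraint $t\ge \epsilon^\alpha$, since any exponential-in-$(t/\epsilon^\alpha)$ factor beats the required polynomial rate.
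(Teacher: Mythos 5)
Your proposal is correct and follows essentially the same route as the paper: Fourier inversion of both kernels, a near-origin region where Assumption \ref{cond1} controls $\bigl|e^{-u}-e^{-v}\bigr|\le |u-v|e^{-\Re\min(u,v)}$ and scaling $w\mapsto t^{-1/\alpha}\xi$ produces the rate $\epsilon^a t^{-(d+a)/\alpha}$, and complementary regions where everything decays like $e^{-c\,t/\epsilon^\alpha}$ and is absorbed using $t\ge\epsilon^\alpha$. The only (harmless) difference is cosmetic: you cut at a fixed radius $|\epsilon w|\le\delta$ and invoke the spectral gap $1-\Re\hat\mu\ge\eta$ off the origin, whereas the paper cuts at the $t,\epsilon$-dependent radius $A_{t,\epsilon}$ and bounds $\tfrac{t}{\epsilon^\alpha}\mathcal{D}(\epsilon z)$ there; both choices yield the same exponent.
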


\begin{proof}
We begin by recalling that 
\begin{equation*}
p_t(x)=\frac{1}{(2\pi)^d}\int_{\R^d}e^{-ix\cdot z}e^{-t\nu|z|^\alpha}\,\d z,
\end{equation*}
and 
\begin{equation*}
P(X_t=x)=\frac{1}{(2\pi)^d}\int_{[-\pi,\,\pi]^d}e^{-ix\cdot z} e^{-t(1-\hat\mu(z))}\,\d z.
\end{equation*}
We therefore have 
\begin{align*}
(2\pi)^d\left|\frac{1}{\epsilon^d}P(\epsilon X_{t/\epsilon^\alpha}=x)-p_t(x)\right|&=\left|\frac{1}{\epsilon^d}\int_{[-\pi,\,\pi]^d}e^{-\frac{ix}{\epsilon}\cdot z} e^{-\frac{t}{\epsilon^\alpha}(1-\hat\mu(z))}\,\d z-\int_{\R^d}e^{-ix\cdot z}e^{-t\nu|z|^\alpha}\,\d z\right|\\
&\leq \left|\int_{[-\frac{\pi}{\epsilon},\,\frac{\pi}{\epsilon}]^d} e^{-ix\cdot z}\left[e^{-\frac{t}{\epsilon^\alpha}(1-\hat\mu(\epsilon z))}-e^{-t\nu|z|^\alpha}\right]\,\d z\right|\\
&\qquad+\left|\int_{\R^d \backslash [-\frac{\pi}{\epsilon},\,\frac{\pi}{\epsilon}]^d} e^{-ix\cdot z}e^{-t\nu|z|^\alpha}\,\d z\right|\\
&:=I_1+I_2.
\end{align*}
We bound $I_2$ first.
\begin{align*}
I_2&=\left|\int_{([-\frac{\pi}{\epsilon},\,\frac{\pi}{\epsilon}]^d)^c} e^{-ix\cdot z}e^{-t\nu|z|^\alpha}\,\d z\right|\\
&\leq e^{-\frac{c_1t\nu \pi^\alpha}{\epsilon^\alpha}}\int_{\R^d}e^{-\frac{t\nu|z|^\alpha}{2}}\,\d z\\
& \lesssim\frac{\epsilon^{\alpha N}}{t^N}\frac{1}{t^{d/\alpha}},
\end{align*}
where $N$ is some positive integer. Choosing $N$ to be the smallest integer bigger than $a/\alpha$ and using $t\geq \epsilon^\alpha$, the above inequality reduces to  
\begin{equation*}
I_2\lesssim\frac{\epsilon^a}{t^{(d+a)/\alpha}}.
\end{equation*}
Bounding $I_1$ is slightly harder. We begin by splitting the integral as follows.
\begin{align*}
I_1&\leq \left|\int_{A_{t\,\epsilon}} e^{-ix\cdot z}\left[e^{-\frac{t}{\epsilon^\alpha}(1-\hat\mu(\epsilon z))}-e^{-t\nu|z|^\alpha}\right]\,\d z\right|\\
&\qquad +\left|\int_{A_{t\,\epsilon}^c\cap [-\frac{\pi}{\epsilon},\,\frac{\pi}{\epsilon}]^d} e^{-ix\cdot z}\left[e^{-\frac{t}{\epsilon^\alpha}(1-\hat\mu(\epsilon z))}-e^{-t\nu|z|^\alpha}\right]\,\d z\right|\\
&=: I_3+I_4,
\end{align*}
where $$A_{t,\,\epsilon}:=\left\{z\in \R^d; |z|\leq \frac{1}{t^{1/(\alpha+a)}\epsilon^{a/(\alpha+a)}} \right \}.$$
Since we have $1-\hat\mu(\epsilon z)\geq c_2|\epsilon z|^\alpha$ for some positive constant $c_2$, we have 
\begin{align*}
\left|e^{-ix\cdot z}\left[e^{-\frac{t}{\epsilon^\alpha}(1-\hat\mu(\epsilon z))}-e^{-t\nu|z|^\alpha}\right]\right|\leq 2e^{-c_3t|z|^\alpha},
\end{align*}
with $c_3$ being another positive constant.
We now have 
\begin{align*}
I_4&\leq 2\int_{A_{t,\,\epsilon}^c\cap [-\frac{\pi}{\epsilon},\,\frac{\pi}{\epsilon}]^d}e^{-c_3t|z|^\alpha}\,\d z\\
&\lesssim \sup_{A_{t,\,\epsilon}^c}e^{-c_3t|z|^\alpha/2}\int_{\R^d}e^{-c_3t|z|^\alpha/2}\,\d z.
\end{align*}
On $A_{t,\,\epsilon}^c$, we have $|z|>\frac{1}{t^{1/(\alpha+a)}\epsilon^{a/(\alpha+a)}}$ and hence
\bes \begin{split}
\sup_{A_{t,\,\epsilon}^c}e^{-c_3t|z|^\alpha/2}&\leq e^{-\frac{c_3t}{2\epsilon^{a\alpha/(\alpha+a)}t^{\alpha/(\alpha+a)}}} \\
&\lesssim \frac{\epsilon^{ a\alpha\tilde{N}/(\alpha+a)}t^{\alpha \tilde{N}/(\alpha+a)}}{t^{\tilde{N}}}
\end{split}\ees
where $\tilde{N}$ is a positive integer. We therefore have 
\begin{align*}
I_4&\lesssim\frac{\epsilon^{ \alpha a\tilde{N}/(\alpha+a)}t^{\alpha \tilde{N}/(\alpha+a)}}{t^{\tilde{N}}}t^{d/\alpha}\\
&\lesssim\frac{\epsilon^{ \alpha a\tilde{N}/(\alpha+a)-a}t^{\alpha \tilde{N}/(\alpha+a)}}{t^{(\tilde{N}-a/\alpha)}}\frac{\epsilon^a}{t^{(d+a)/\alpha}}\\
&\lesssim \frac{\epsilon^a}{t^{(d+a)/\alpha}},
\end{align*}
where we have chosen $\tilde{N}$ to be large enough and used that $t\geq \epsilon^\alpha$. To bound $I_3$, we note that 
\begin{align*}
e^{-\frac{t}{\epsilon^\alpha}(1-\hat\mu(\epsilon z))}-e^{-t\nu|z|^\alpha}&=e^{-t\nu|z|^\alpha}[e^{\frac{t}{\epsilon^\alpha}\mathcal{D}(\epsilon z)}-1].
\end{align*}
Since we have $\frac{t}{\epsilon^\alpha}|\mathcal{D}(\epsilon z)|\leq \frac{c_4t}{\epsilon^\alpha}|\epsilon z|^{\alpha+a}$ for some constant $c_4$, on $A_{t,\,\epsilon}$, we have that $\frac{t}{\epsilon^\alpha}\mathcal{D}(\epsilon z)$ is bounded. We therefore have
\begin{align*}
I_3&\leq\int_{A_{t\,\epsilon}} e^{-t\nu|z|^\alpha}\left|e^{\frac{t}{\epsilon^\alpha}\mathcal{D}(\epsilon z)}-1\right| \d z\\
&\lesssim\int_{A_{t\,\epsilon}} e^{-t\nu|z|^\alpha}\frac{t}{\epsilon^\alpha}|\epsilon z|^{\alpha+a}\,\d z\\
&\le\int_{\R^d} e^{-t\nu|z|^\alpha}\frac{t}{\epsilon^\alpha}|\epsilon z|^{\alpha+a}\,\d z\\
&\lesssim\frac{\epsilon^a}{t^{(a+d)/\alpha}}.
\end{align*}
This completes the proof of the proposition.
\end{proof}
We next prove refinement of the above proposition. This will give us more information when $|x|\geq t^{1/\alpha}$. We need this because Riesz kernels have slowly decaying tails and we need to compensate for this by obtaining a better bound for large $x$. We need two lemmas whose proofs will be as useful as the results they describe.

\begin{lemma} Let $z \in \R^d\backslash\{0\}$. For any positive integer $k$ and real number $\gamma$,
\be \label{eq:abs_der}
\frac{\partial^k}{\partial z_i^k} |z|^\gamma = \sum_{n=1}^{n_k} A_n |z|^{\gamma-k} \Big(\frac{z_i}{|z|}\Big)^n,
\ee
where $A_n$'s are constants and $n_k$ denotes a positive integer depending on $k$. In particular, 
\be \label{eq:abs_der:bd}
\frac{\partial^k}{\partial z_i^k} |z|^\gamma \lesssim |z|^{\gamma-k}.
\ee
\end{lemma}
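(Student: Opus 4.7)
The plan is to prove \eqref{eq:abs_der} by induction on $k$, after which the bound \eqref{eq:abs_der:bd} will follow immediately from $|z_i|/|z|\le 1$ and the finiteness of the sum.

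For the base case $k=1$, a direct calculation gives $\partial_{z_i}|z|^\gamma = \gamma|z|^{\gamma-2}z_i = \gamma|z|^{\gamma-1}(z_i/|z|)$, which fits the pattern with $n_1=1$ and $A_1=\gamma$. For the inductive step, the key simplification is to rewrite each summand as a pure monomial in $z_i$ and power of $|z|$, namely
\[
|z|^{\gamma-k}\Big(\frac{z_i}{|z|}\Big)^n = |z|^{\gamma-k-n}\, z_i^n,
\]
so that the product rule yields
\[
\partial_{z_i}\bigl[|z|^{\gamma-k-n}z_i^n\bigr] = (\gamma-k-n)|z|^{\gamma-k-n-2}z_i^{n+1} + n\,|z|^{\gamma-k-n}z_i^{n-1}.
\]
Rewriting both terms back in the $(z_i/|z|)$-form shows that each contributes a summand of the shape $C\,|z|^{\gamma-(k+1)}(z_i/|z|)^{m}$ with $m\in\{n-1,n+1\}$. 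Summing the inductive hypothesis over $n$ preserves the required form with some new constants and $n_{k+1}\le n_k+1$, which closes the induction.

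The bound \eqref{eq:abs_der:bd} then follows directly: since $|z_i/|z||\le 1$, each summand is dominated by $|A_n|\,|z|^{\gamma-k}$, and there are only $n_k$ of them. I do not anticipate any real obstacle; the only very minor subtlety is that the inductive step may produce a $(z_i/|z|)^0\equiv 1$ term (when $n=1$), which is harmless — it satisfies the same scaling bound and can either be absorbed by allowing $n=0$ or be combined into a slightly larger index range without affecting the final estimate.
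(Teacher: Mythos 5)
Your proof is correct and follows essentially the same route as the paper: induction on $k$ with the product rule, the base case being $\partial_{z_i}|z|^\gamma=\gamma|z|^{\gamma-1}(z_i/|z|)$, and the bound \eqref{eq:abs_der:bd} then read off from $|z_i|/|z|\le 1$. Your rewriting of each summand as the monomial $|z|^{\gamma-k-n}z_i^n$ before differentiating is a tidier bookkeeping device than the paper's direct differentiation of $(z_i/|z|)^n$, and your remark about the harmless $(z_i/|z|)^0$ term correctly identifies a point (the index should really start at $n=0$) that the paper's own proof silently absorbs when it ``gathers all the terms together.''
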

\begin{proof} We restrict to $i=1$. The proof follows by induction on $k$. The first derivative is
\bes
\frac{\partial}{\partial z_1} |z|^\gamma = \gamma |z|^{\gamma-1}\Big(\frac{z_1}{|z|}\Big).
\ees
Assume now that the \eqref{eq:abs_der} is true for some $k$. We use the product rule to differentiate \eqref{eq:abs_der} with respect to $z_1$ and obtain
 \bes
\frac{\partial^{k+1}}{\partial z_1^{k+1}} |z|^\gamma= \sum_{n=1}^{n_k} A_n (\gamma-k)|z|^{\gamma-k-1}\cdot \Big(\frac{z_1}{|z|}\Big)^{n+1} +\sum_{n=1}^{n_k} A_n |z|^{\gamma-k} \cdot n \Big(\frac{z_1}{|z|}\Big)^{n-1}\frac{1}{|z|}\Big[1- \Big(\frac{z_1}{|z|}\Big)^2\Big].
\ees
We now gather all the terms together to see
\bes
\frac{\partial^{k+1}}{\partial z_1^{k+1}} |z|^\gamma=\sum_{n=1}^{n_{k+1}} \tilde{A}_n |z|^{\gamma-{k+1}} \Big(\frac{z_1}{|z|}\Big)^n.
\ees
This is clearly of the same form as \eqref{eq:abs_der}. The second part of the lemma follows from the obvious bound $z_1/|z|\le 1$.
\end{proof}
An immediate consequence of the above result is the following.
\begin{lemma} \label{lem:exp:der:bd}
For any $z\in \R^d\backslash\{0\}$ and any positive integer $N$, we have 
\be \label{eq:exp:der:bd}
\left|\frac{\partial^{N}}{\partial z_i^{N}}e^{-\nu t|z|^\alpha}\right|\lesssim e^{-\nu t|z|^\alpha} \sum_{k=1}^N t^k|z|^{\alpha k-N}.
\ee
\end{lemma}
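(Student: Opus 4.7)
The plan is to combine Fa\`a di Bruno's formula with the bound \eqref{eq:abs_der:bd} from the preceding lemma. With $g(u)=e^{-\nu t u}$ and $f(z)=|z|^\alpha$, Fa\`a di Bruno's formula applied to the single variable $z_i$ gives
\bes
\frac{\partial^{N}}{\partial z_i^{N}} e^{-\nu t|z|^\alpha} = e^{-\nu t|z|^\alpha} \sum_{\pi} \prod_{B\in\pi} \bigl( -\nu t \bigr)\, \frac{\partial^{|B|}}{\partial z_i^{|B|}} |z|^\alpha ,
\ees
where the sum runs over set partitions $\pi$ of $\{1,\dots,N\}$ and $|B|$ denotes the size of a block. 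By \eqref{eq:abs_der:bd} we have $\bigl|\partial_i^{|B|}|z|^\alpha\bigr|\lesssim |z|^{\alpha-|B|}$, so a partition with $k$ blocks (whose sizes sum to $N$) contributes at most $t^k |z|^{\alpha k-N}$ times a dimensional constant. Grouping partitions by the number of blocks $k\in\{1,\dots,N\}$ absorbs the combinatorial multiplicities into the implicit constant and produces exactly the bound \eqref{eq:exp:der:bd}.

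A slightly more elementary alternative, avoiding Fa\`a di Bruno, is to proceed by induction on $N$. The base case $N=1$ follows from the chain rule: $\partial_i e^{-\nu t|z|^\alpha}=-\nu t (\partial_i|z|^\alpha) e^{-\nu t|z|^\alpha}$ is bounded by $t|z|^{\alpha-1} e^{-\nu t|z|^\alpha}$ by \eqref{eq:abs_der:bd}. For the inductive step, differentiate a generic term $t^k|z|^{\alpha k-N}e^{-\nu t|z|^\alpha}$ using the product rule: the derivative hitting the exponential yields (after applying \eqref{eq:abs_der:bd} with $\gamma=\alpha$) a term of order $t^{k+1}|z|^{\alpha(k+1)-(N+1)}e^{-\nu t|z|^\alpha}$, while the derivative hitting the polynomial factor yields $t^k |z|^{\alpha k-(N+1)}e^{-\nu t|z|^\alpha}$ via \eqref{eq:abs_der:bd} applied with the real exponent $\gamma=\alpha k-N$. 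The sum in \eqref{eq:exp:der:bd} for $N+1$ then falls out by collecting terms for each $k\in\{1,\dots,N+1\}$.

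The only point that needs a sentence of commentary is that \eqref{eq:abs_der:bd} is stated for an \emph{arbitrary} real exponent $\gamma$, which is exactly what licenses its repeated use on non-integer exponents such as $\alpha k-m$ arising in the induction. Beyond that there is no real obstacle: the content of the lemma is purely combinatorial bookkeeping of how powers of $t$ and inverse powers of $|z|$ accumulate under repeated differentiation of the composition, and the exponential prefactor is carried along unchanged in the estimate.
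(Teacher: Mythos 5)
Your first argument is exactly the paper's proof: the display \eqref{eq:exp:der} in the paper is Fa\`a di Bruno's formula in Bell-polynomial form, and the paper likewise concludes by applying \eqref{eq:abs_der:bd} to each factor and noting that a term with $k=k_1+\cdots+k_N$ blocks contributes $t^k|z|^{\alpha k-N}$. Your inductive alternative is also fine in substance, with the small caveat that one cannot literally ``differentiate a generic term'' of an upper bound---to make it rigorous you should propagate an exact representation (e.g.\ the form \eqref{eq:abs_der} with the $(z_i/|z|)^n$ factors) through the induction and only estimate at the end.
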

\begin{proof}
 Set $g(z):= e^{f(z)}$ for some smooth function $f$. Some calculus shows that
\be \label{eq:exp:der}
\frac{\partial^{N}}{\partial z_1^{N}}g(z)=e^{f(z)}\sum_{\substack{\mathbf{k}=(k_1, k_2,\cdots, k_N) \\ k_1+2k_2+\cdots dk_N=N}} A_{\mathbf{k}} \Big[\frac{\partial}{\partial z_1} f(z)\Big]^{k_1}\cdot \Big[\frac{\partial^2}{\partial z_1^2} f(z)\Big]^{k_2}\cdots \Big[\frac{\partial^N}{\partial z_1^N} f(z)\Big]^{k_N},
\ee
where the constants $A_{\mathbf{k}}$ depend on $N$. Using the above expression with $f(z)=-t\nu |z|^\alpha$ together with the previous lemma gives the result.
\end{proof}
The proof of the following result requires ideas from the theory of oscillatory integrals which deals with asymptotics of such integrals. The reader can learn about this from \cite{stei}.
\begin{proposition} \label{prop:x}
Fix $T>0$. Suppose that Assumptions \ref{cond1} and \ref{cond2} both hold. Then uniformly for  $\epsilon^\alpha\leq t\leq T$ and $|x|>t^{1/\alpha}, \, x\in \epsilon \Z^d$, we have
\begin{equation*}
\left|\frac{1}{\epsilon^d}P(\epsilon X_{t/\epsilon^\alpha}=x)-p_t(x)\right|\lesssim \frac{t\epsilon^a}{|x|^{d+\alpha+a}}.
\end{equation*}
\end{proposition}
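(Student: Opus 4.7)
My plan is to refine Proposition \ref{prop:unif} via an oscillatory-integral argument, integrating by parts in the Fourier representation of the difference to extract additional decay in $|x|$. Setting $\tilde p_t^\epsilon(x):=\epsilon^{-d}\P(\epsilon X_{t/\epsilon^\alpha}=x)$, write
$$\tilde p_t^\epsilon(x)-p_t(x) = \frac{1}{(2\pi)^d}\int_{\R^d}e^{-ix\cdot z}\,\psi_\epsilon(z)\,\d z,$$
with $\psi_\epsilon(z):=\mathbf{1}_{[-\pi/\epsilon,\pi/\epsilon]^d}(z)\,e^{-(t/\epsilon^\alpha)(1-\hat\mu(\epsilon z))}-e^{-t\nu|z|^\alpha}$. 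Pick $i\in\{1,\ldots,d\}$ with $|x_i|\geq|x|/\sqrt{d}$ and integrate by parts $N:=d+3$ times in $z_i$; this is the maximal $N$ afforded by Assumption \ref{cond2}. The result is
$$|x|^N\,|\tilde p_t^\epsilon(x)-p_t(x)| \lesssim \int_{[-\pi/\epsilon,\pi/\epsilon]^d}|\partial^N_{z_i}\psi_\epsilon(z)|\,\d z + \mathcal T + \mathcal B,$$
where $\mathcal T:=\int_{\R^d\setminus[-\pi/\epsilon,\pi/\epsilon]^d}|\partial^N_{z_i}e^{-t\nu|z|^\alpha}|\,\d z$ is super-exponentially small via $e^{-t\nu|z|^\alpha}\leq e^{-ct/\epsilon^\alpha}$ on that region, and the boundary contributions $\mathcal B$ at $z_i=\pm\pi/\epsilon$ likewise decay like $e^{-ct/\epsilon^\alpha}$ by Assumption \ref{cond1} ($\hat\mu<1$ on $\{z:z_i=\pm\pi\}$). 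Both are negligible against $t\epsilon^a/|x|^{d+\alpha+a}$ once $t\geq\epsilon^\alpha$.

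For the main integrand, decompose $\psi_\epsilon=e^{f_1}(e^g-1)$ on the Brillouin zone, with $f_1(z):=-t\nu|z|^\alpha$ and $g(z):=t\epsilon^{-\alpha}\mathcal D(\epsilon z)$. Apply the Leibniz rule and the Fa\`{a} di Bruno-type identity \eqref{eq:exp:der} to $\partial^N_{z_i}[e^{f_1}(e^g-1)]$; each term factors into $\partial^m_{z_i}e^{f_1}$ (bounded via Lemma \ref{lem:exp:der:bd}) and a sum of products of derivatives of $g$. Assumption \ref{cond2} gives, for $j\geq 1$,
$$|\partial^j_{z_i}g(z)|=t\epsilon^{j-\alpha}|\mathcal E_{j,i}(\epsilon z)|\lesssim t\epsilon^a|z|^{\alpha+a-j},$$
while near the origin $|g(z)|\lesssim t\epsilon^a|z|^{\alpha+a}$ by Assumption \ref{cond1}. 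Each derivative of $g$ (and the zeroth-order factor in the $m=N$ term) injects the crucial $\epsilon^a$. After these pointwise bounds, the rescaling $z=t^{-1/\alpha}w$ converts the remaining integrals into finite dimensional constants, and collecting the resulting factors of $t$, $\epsilon$, $|x|$ and using the hypothesis $|x|>t^{1/\alpha}$ to trade excess powers of $t$ for excess powers of $|x|$ gives
$$|\tilde p_t^\epsilon(x)-p_t(x)| \lesssim \frac{t\epsilon^a}{|x|^{d+\alpha+a}}.$$
The choice $N=d+3$ is precisely what makes the exponent $N-d-\alpha-a=3-\alpha-a$ positive (since $\alpha<2$ and $a<1$), validating the trade-off $t^{(3-\alpha-a)/\alpha}\lesssim|x|^{3-\alpha-a}$.

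The most delicate step is the $m=N$ term, $\partial^N_{z_i}e^{f_1}\cdot(e^g-1)$, in the Leibniz expansion, since $(e^g-1)$ is not uniformly $O(\epsilon^a)$ on the whole Brillouin zone ($|g|$ may reach order $t/\epsilon^\alpha$ near $|z|\sim 1/\epsilon$). This is handled by splitting into a neighborhood of the origin (where $|e^g-1|\lesssim|g|\lesssim t\epsilon^a|z|^{\alpha+a}$) and its complement (where one rewrites $e^{f_1}(e^g-1)=e^{f_2}-e^{f_1}$ and exploits both $e^{f_2}\leq e^{-ct|z|^\alpha}$ from Assumption \ref{cond1} and the rapid decay of $e^{f_1}$, still recovering an $\epsilon^a$ factor through a routine additional computation). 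A secondary technical nuance is the integrability at the origin of the singular factors $|z|^{\alpha k-N}$ produced by Lemma \ref{lem:exp:der:bd}; this is exactly what caps $N$ at $d+3$.
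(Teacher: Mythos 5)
Your overall strategy (write the difference as an oscillatory integral and integrate by parts to convert decay of derivatives of the symbol into decay in $|x|$) is the same as the paper's, and your treatment of the tail $\mathcal T$, the boundary terms $\mathcal B$, and the derivative bounds on $g$ via Assumption \ref{cond2} is sound. But there is a genuine gap at the origin, and it is not the ``secondary technical nuance'' you describe: it is the obstruction that forces the paper's extra decomposition, and your fixed choice $N=d+3$ cannot circumvent it. After $N$ integrations by parts you bound $|x|^N|\tilde p^\epsilon_t(x)-p_t(x)|$ by $\int|\partial^N_{z_i}\psi_\epsilon|\,\d z$. The $m=0$ term of your Leibniz expansion contains $e^{f_1+g}\,\partial^N_{z_i}g$, which near $z=0$ is of size $t\epsilon^a|z|^{\alpha+a-N}$ (and the $m=N$ term $\partial^N_{z_i}e^{f_1}\cdot(e^g-1)$ is of size $t^2\epsilon^a|z|^{2\alpha+a-N}$). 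The first of these is locally integrable on $\R^d$ only if $N<d+\alpha+a$. On the other hand, to trade $|x|^{-N}$ for $|x|^{-(d+\alpha+a)}$ uniformly over all $|x|\ge t^{1/\alpha}$ you need $N\ge d+\alpha+a$, exactly as in your final paragraph. These two requirements are mutually exclusive, so for every admissible $(\alpha,a)$ with $\alpha<2$ the integral $\int|\partial^N_{z_i}\psi_\epsilon|\,\d z$ with $N=d+3$ diverges and the claimed bound is vacuous; your assertion that the rescaling $z=t^{-1/\alpha}w$ produces ``finite dimensional constants'' is therefore false, and your remark that integrability ``caps $N$ at $d+3$'' has the inequality pointing the wrong way.

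The missing idea is a second, $x$-dependent localization at the origin. The paper inserts a cutoff $\psi$ supported at scale $\lambda$ and treats $|z|\le 2\lambda$ \emph{without} any integration by parts, bounding that piece directly by $t\lambda^{d+\alpha+a}/\epsilon^{d+\alpha}$ using only $|\mathcal D(z)|\lesssim|z|^{\alpha+a}$; on $|z|\ge\lambda$ it integrates by parts $N=d+2$ times, and the resulting singular factor $|z|^{-(N-1)}$ is integrable down to $\lambda$ at cost $\lambda^{d-N+1}$. Optimizing $\lambda=\epsilon/|x|$ balances the two contributions and produces the $|x|^{-(d+\alpha+a)}$ decay. (The paper also uses a first cutoff $\phi(|z|t^{1/\alpha})$ to isolate the high-frequency regime and to recover $p_t(x)$ cleanly, but that part of your argument is a legitimate reorganization.) Without this scale decomposition near $z=0$ --- or an equivalent dyadic/Littlewood--Paley device with a variable number of integrations by parts per annulus --- the argument does not close.
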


\begin{proof}
Let $\phi:\R\rightarrow \R_+$ be a smooth, symmetric cutoff function with $\phi(z):=1$ for $|z|\leq 1$ and $\phi(z)=0$ for $|z|\geq 2$.
\begin{equation*}
\begin{aligned}
\frac{(2\pi)^d}{\epsilon^d}P(\epsilon X_{t/\epsilon^\alpha}=x)&=\frac{1}{\epsilon^d}\int_{[-\pi,\,\pi]^d}e^{-\frac{ix}{\epsilon}\cdot z} e^{-\frac{t}{\epsilon^\alpha}(1-\hat\mu(z))}\,\d z\\
&=\frac{1}{\epsilon^d}\int_{[-\pi,\,\pi]^d}e^{-\frac{ix}{\epsilon}\cdot z} e^{-\frac{t}{\epsilon^\alpha}(1-\hat\mu(z))}\phi(|z|t^{1/\alpha})\,\d z\\
& \qquad+\frac{1}{\epsilon^d}\int_{[-\pi,\,\pi]^d}e^{-\frac{ix}{\epsilon}\cdot z} e^{-\frac{t}{\epsilon^\alpha}(1-\hat\mu(z))}[1-\phi(|z|t^{1/\alpha})]\,\d z\\
&=I_1+I_2.
\end{aligned}
\end{equation*}
We first show that $I_2$ is small. Using integration by parts and the fact that $\hat\mu$ is periodic,
\begin{equation*}
\begin{aligned}
|I_2|\lesssim \frac{1}{\epsilon^d}\left(\frac{\epsilon}{|x_j|}\right)^N\left|\int_{[-\pi,\,\pi]^d}e^{-\frac{ix}{\epsilon}\cdot z} \frac{\partial^N}{\partial  z_j^N}\left\{e^{-\frac{t}{\epsilon^\alpha}(1-\hat\mu(z))}[1-\phi(|z|t^{1/\alpha})]\right\}\,\d z\right|,
\end{aligned}
\end{equation*}
where we shall choose later $N \le d+3$ and $0\leq j\leq d$. We next need an estimate on 
\begin{equation*}
\frac{\partial^N}{\partial  z_j^N}\left\{e^{-\frac{t}{\epsilon^\alpha}(1-\hat\mu(z))}[1-\phi(|z|t^{1/\alpha})]\right\}.
\end{equation*}
After some computations very similar to the proofs of the above two lemmas, we have by Assumptions \ref{cond1} and \ref{cond2} that for $k \le d+3$
\begin{equation}\label{diff1}
\left|\frac{\partial ^k}{\partial z_j^k}e^{-\frac{t}{\epsilon^\alpha}(1-\hat\mu(z))}\right|\lesssim\sum_{l=1}^k|z|^{l\alpha-k}\frac{t^l}{\epsilon^{l\alpha}}e^{-c_1\nu|z|^\alpha \frac{t}{\epsilon^\alpha}}.
\end{equation}
Note that  
\begin{equation}\label{diff2}
\left(\frac{|z|^\alpha t}{\epsilon^\alpha}\right)^{l}e^{-c_1\nu|z|^\alpha \frac{t}{\epsilon^\alpha}}\lesssim e^{-\frac{c_1\nu}{2}|z|^\alpha \frac{t}{\epsilon^\alpha}}.
\end{equation}
We now make a couple of observations.  For $1-\phi(|z|t^{1/\alpha})$ to be non-zero, $|z|$ has be be bigger than $t^{-1/\alpha}$ and derivatives of $1-\phi(|z|t^{1/\alpha})$ are non-zero only when $1\le |z|t^{1/\alpha}\le 2$. Moreover by an argument similar to \eqref{eq:exp:der:bd} 
\begin{equation} \label{eq:phi:der}
\left|\frac{\partial^k}{\partial z_j^k}\phi(|z|t^{\frac1\alpha})\right| \lesssim \frac{1}{|z|^k}\sum_{i=1}^kt^{\frac{i}{\alpha}}|z|^i.
\end{equation}
Using these observations along with Leibniz's rule gives
\begin{equation*}
\left|\frac{\partial ^N}{\partial z_j^N}\left\{e^{-\frac{t}{\epsilon^\alpha}(1-\hat\mu(z))}[1-\phi(|z|t^{1/\alpha})]\right\}\right|\leq c_2t^{N/\alpha}e^{-c_3|z|^\alpha \frac{t}{\epsilon^\alpha}}.
\end{equation*}
We therefore have 
\begin{equation*}
\begin{aligned}
\Big|\int_{[-\pi,\,\pi]^d}&e^{-\frac{ix}{\epsilon}\cdot z} \frac{\partial^N}{\partial  z_j^N}\left\{e^{-\frac{t}{\epsilon^\alpha}(1-\hat\mu(z))}[1-\phi(|z|t^{1/\alpha})]\right\}\,\d z \Big|\\
&\leq \int_{[-\pi,\,\pi]^d}\left|\frac{\partial^N}{\partial  z_j^N}\left\{e^{-\frac{t}{\epsilon^\alpha}(1-\hat\mu(z))}[1-\phi(|z|t^{1/\alpha})]\right\}\right|\,\d z\\
&\leq c_4\int_{[-\pi,\,\pi]^d}t^{N/\alpha}e^{-c_3|z|^\alpha \frac{t}{\epsilon^\alpha}}\,\d z\\
&\lesssim \frac{\epsilon^d}{t^{(d-N)/\alpha}} .
\end{aligned}
\end{equation*}
Putting the above estimates together and using that $j$ is arbitrary, we have
\begin{equation*}
I_2\lesssim\frac{\epsilon^{N}}{|x|^Nt^{(d-N)/\alpha}}.
\end{equation*}
Now choose $N$ an integer so that $N>d+\alpha+a$. Together with  $\epsilon^\alpha\le t \le T, \, |x|\ge t^{1/\alpha}$, we obtain 
\begin{equation*}
I_2\lesssim \frac{\epsilon^a t}{|x|^{d+\alpha+a}}.
\end{equation*}
We now look at $I_1$. We will decompose $I_1$ as follows.
\begin{equation*}
\begin{aligned}
I_1&=\frac{1}{\epsilon^d}\int_{[-\pi,\pi]^d}e^{-\frac{ix}{\epsilon}\cdot z} \left[e^{-\frac{t}{\epsilon^\alpha}(1-\hat\mu(z))}-e^{-\nu|z|^\alpha \frac{t}{\epsilon^\alpha}}\right]\phi(|z|t^{1/\alpha})\,\d z\\
&\qquad +\frac{1}{\epsilon^d}\int_{[-\pi,\pi]^d}e^{-\frac{ix}{\epsilon}\cdot z}e^{-\nu|z|^\alpha \frac{t}{\epsilon^\alpha}}\phi(|z|t^{1/\alpha}))\,\d z\\
&=I_3+I_4.
\end{aligned}
\end{equation*}
We look at $I_4$ first since it is the most straightforward part and bounding it involves the ideas used above. 
\begin{equation*}
\begin{aligned}
I_4&=\frac{1}{\epsilon^d}\int_{[-\pi,\pi]^d}e^{-\frac{ix}{\epsilon}\cdot z}e^{-\nu|z|^\alpha \frac{t}{\epsilon^\alpha}}\left[\phi(|z|t^{1/\alpha})-1\right]\,\d z
\\
&\qquad +\frac{1}{\epsilon^d}\int_{[-\pi,\pi]^d}e^{-\frac{ix}{\epsilon}\cdot z}e^{-\nu|z|^\alpha \frac{t}{\epsilon^\alpha}}\,\d z \\
& = \frac{1}{\epsilon^d}\int_{[-\pi,\pi]^d}e^{-\frac{ix}{\epsilon}\cdot z}e^{-\nu|z|^\alpha \frac{t}{\epsilon^\alpha}}\left[\phi(|z|t^{1/\alpha})-1\right]\,\d z \\
&\qquad+ (2\pi)^d p_t(x) -\frac{1}{\epsilon^d}\int_{\R^d\backslash [-\pi,\pi]^d}e^{-\frac{ix}{\epsilon}\cdot z}e^{-\nu|z|^\alpha \frac{t}{\epsilon^\alpha}}\,\d z \\
&= (2\pi)^d p_t(x)+O\left( \frac{ \epsilon^a t}{|x|^{d+\alpha+a}}\right).
\end{aligned}
\end{equation*}
The last line requires some explanation. The first term in the second last line can be bounded just as we 
did for $I_2$. For $N>d+\alpha+a$ we have from \eqref{eq:exp:der:bd} and \eqref{diff2}, and an appropriate choice of $j$,
\begin{equation*}\begin{split}
\left | \frac{1}{\epsilon^d}\int_{\R^d\backslash [-\pi,\pi]^d}e^{-\frac{ix}{\epsilon}\cdot z}e^{-\nu|z|^\alpha \frac{t}{\epsilon^\alpha}}\,\d z\right| & \lesssim \frac{1}{\epsilon^d} \left(\frac{\epsilon}{|x_j|}\right)^N \int_{\R^d\backslash[-\pi,\pi]^d} \left| \frac{\partial^N}{\partial z_1^N} e^{-\nu |z|^\alpha\frac{t}{\epsilon^\alpha}}\right| \, \d z \\
&  \lesssim \frac{1}{\epsilon^d} \left(\frac{\epsilon}{|x_j|}\right)^Ne^{-c_5 \frac{t}{\epsilon^\alpha}}  \\
&\lesssim \frac{ \epsilon^a t}{|x|^{d+\alpha+a}},
\end{split}
\end{equation*}
since $|x|^\alpha \ge t \ge \epsilon^\alpha$. We next turn our attention to $I_3$. We split the integral as follows.
\begin{align*}
I_3&=\frac{1}{\epsilon^d}\int_{[-\pi,\pi]^d}e^{-\frac{ix}{\epsilon}\cdot z} \left[e^{-\frac{t}{\epsilon^\alpha}(1-\hat\mu(z))}-e^{-\nu|z|^\alpha \frac{t}{\epsilon^\alpha}}\right]\phi(|z|t^{1/\alpha})\,\d z\\
&=\frac{1}{\epsilon^d}\int_{[-\pi,\pi]^d}e^{-\frac{ix}{\epsilon}\cdot z} \left[e^{-\frac{t}{\epsilon^\alpha}(1-\hat\mu(z))}-e^{-\nu|z|^\alpha \frac{t}{\epsilon^\alpha}}\right]\phi(|z|t^{1/\alpha})\psi(|z|)\,\d z\\
&\qquad+\frac{1}{\epsilon^d}\int_{[-\pi,\pi]^d}e^{-\frac{ix}{\epsilon}\cdot z} \left[e^{-\frac{t}{\epsilon^\alpha}(1-\hat\mu(z))}-e^{-\nu|z|^\alpha \frac{t}{\epsilon^\alpha}}\right]\phi(|z|t^{1/\alpha})[1-\psi(|z|)]\,\d z\\
&=I_5+I_6,
\end{align*}
where $\psi(\cdot)$ is a radial and smooth nonnegative function which equals to $1$ inside a ball of radius $\lambda \le 1$ (to be chosen later) and zero outside a ball of size $2\lambda$.  For $|z|\le 2$, we have 
\begin{align*}
e^{-\frac{t}{\epsilon^\alpha}(1-\hat\mu(z))}&-e^{-\nu|z|^\alpha \frac{t}{\epsilon^\alpha}}\\
 &=\frac{t}{\epsilon^\alpha}\mathcal{D}(z)e^{-\frac{t}{\epsilon^\alpha}(\nu|z|^\alpha-\theta\mathcal{D}(z))}\quad\text{for}\quad 0<\theta<1,
\end{align*}
which we use to obtain the following,
\begin{align*}
I_5&\lesssim\frac{1}{\epsilon^d}\int_{[-\pi,\pi]^d} \left|e^{-\frac{t}{\epsilon^\alpha}(1-\hat\mu(z))}-e^{-\nu|z|^\alpha \frac{t}{\epsilon^\alpha}}\right|\phi(|z|t^{1/\alpha})\psi(|z|)\,\d z\\
&\lesssim\frac{1}{\epsilon^d}\int_{|z|\leq2 \lambda}\frac{t}{\epsilon^\alpha} \mathcal{D}(z)\,\d z\\
&\lesssim\frac{t\lambda^{a+\alpha+d}}{\epsilon^{\alpha+d}}.
\end{align*}
To bound $I_6$, we note that as before,
\begin{align*}
I_6\lesssim\left(\frac{\epsilon}{|x_j|}\right)^N\frac{1}{\epsilon^d}\int_{[-\pi,\pi]^d}\left|\frac{\partial ^N}{\partial z_j^N}\left\{e^{-\nu|z|^\alpha \frac{t}{\epsilon^\alpha}} \left[e^{-\frac{t}{\epsilon^\alpha}\mathcal{D}(z)}-1\right]\phi(|z|t^{1/\alpha})[1-\psi(|z|)]\right\}\,\right|\d z.
\end{align*}
Due to the presence $\psi$ in the integrand, it is nonzero only when $\lambda\le |z| \le C\pi$. In this region, Leibniz's rule gives
\begin{equation*}\begin{split}
&\left|\frac{\partial ^N}{\partial z_j^N}\left\{e^{-\nu|z|^\alpha \frac{t}{\epsilon^\alpha}}\phi(|z|t^{1/\alpha})\cdot \big[e^{-\frac{t}{\epsilon^\alpha}\mathcal{D}(z)}-1\big][1-\psi(|z|)]\right\} \right|\\
&\lesssim \sum_{k=0}^N\left|\frac{\partial ^k}{\partial z_j^k}\left\{e^{-\nu|z|^\alpha \frac{t}{\epsilon^\alpha}}\phi(|z|t^{1/\alpha}) \right\}\right| \cdot \left|\frac{\partial ^{N-k}}{\partial z_j^{N-k}} \left\{\big[e^{-\frac{t}{\epsilon^\alpha}\mathcal{D}(z)}-1\big][1-\psi(|z|)]\right\}\right| \\
& \lesssim \sum_{k=0}^N \frac{e^{-\nu|z|^\alpha \frac{t}{\epsilon^\alpha}}}{|z|^k} \cdot \frac{1}{|z|^{N-k-1}} \\
&\lesssim \frac{e^{-\nu|z|^\alpha \frac{t}{\epsilon^\alpha}}}{|z|^{N-1}},
\end{split}\end{equation*}
where we used \eqref{diff1}, \eqref{diff2} and \eqref{eq:phi:der} to control the first factor in the third last line,
and for the second factor the relations
\begin{equation*}
\frac{\partial^l}{\partial z_j^l} \psi(|z|) \lesssim \frac{1}{|z|^{l}} \sum_{i=1}^l |z|^{i},
\end{equation*}
and
\begin{equation*} \begin{split}
\left |\frac{\partial^l}{\partial z_j^l} \left[e^{-\frac{t}{\epsilon^\alpha}\mathcal{D}(z)}-1\right] \right|  & \lesssim e^{-\frac{t}{\epsilon^\alpha} \mathcal{D}(z)} \sum_{i=1}^l \left(\frac{t}{\epsilon^\alpha}\right)^i |z|^{(\alpha+a)i-l} \\
&\lesssim \frac{e^{-c_6\frac{t}{\epsilon^\alpha} \mathcal{D}(z)}}{|z|^l},
\end{split}\end{equation*}
which holds because of Assumption \ref{cond2} provided of course $l\le d+3$. After some computations, we obtain for $N=d+2$
\begin{align*}
I_6&\lesssim \left(\frac{\epsilon}{|x|}\right)^N\frac{1}{\epsilon^d}\int_{|z|\geq \lambda}\frac{e^{-\nu|z|^\alpha \frac{t}{\epsilon^\alpha}}}{|z|^{N-1}}\,\d z\\
&\lesssim \frac{\epsilon}{|x|^N}  \left(\frac{\lambda}{\epsilon}\right)^{d-N+1}.
\end{align*}
Choosing $\lambda=\frac{\epsilon}{|x|}$, we have 
\begin{align*}
I_3&\lesssim \frac{t\epsilon^a}{|x|^{d+\alpha+a}}.
\end{align*}
Combining all the above estimates yield the result.
\end{proof}

The proof of the following local limit theorem is now almost complete.
\begin{theorem}\label{local-limit}
Let $\epsilon>0$. 
If Assumption \ref{cond1} holds then uniformly for $x\in \epsilon \bZ^d,\, \epsilon^\alpha\le t\le T$,
\begin{equation*}
\left|\frac{1}{\epsilon^d}P(\epsilon X_{t/\epsilon^\alpha}=x)-p_t(x)\right|\lesssim \frac{\epsilon^a}{t^{(d+a)/\alpha}}.
\end{equation*}
If both Assumptions \ref{cond1} and \ref{cond2} hold then uniformly for $\epsilon^\alpha\le t\le T$ and $|x|\ge t^{1/\alpha},\, x\in \epsilon \Z^d$, 
\begin{equation*}
\left|\frac{1}{\epsilon^d}P(\epsilon X_{t/\epsilon^\alpha}=x)-p_t(x)\right|\lesssim \frac{t\epsilon^a}{|x|^{d+\alpha+a}}.
\end{equation*}
For $0<\alpha<2$, the above inequality reduces to
\begin{equation}\label{eq:combined}
\left|\frac{1}{\epsilon^d}P(\epsilon X_{t/\epsilon^\alpha}=x)-p_t(x)\right|\lesssim \frac{\epsilon^ap_t(x)}{t^{a/\alpha}},
\end{equation}
uniformly for all $x\in \epsilon \Z^d$ and $\epsilon^\alpha \le t\le T$.
\end{theorem}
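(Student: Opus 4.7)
The plan is straightforward: the first two displayed inequalities in Theorem \ref{local-limit} are exactly the conclusions of Propositions \ref{prop:unif} and \ref{prop:x}, so I have nothing further to prove for those. The only new content is the combined bound \eqref{eq:combined}, which merges the two earlier estimates into a single inequality in terms of $p_t(x)$ itself, valid in the range $0<\alpha<2$ where the two-sided heat kernel bound \eqref{heat-estimate} is available.

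My approach is to split into the two regimes that already appear in the statements of the two propositions and, in each regime, use the corresponding half of \eqref{heat-estimate} to replace the explicit $x$ and $t$ dependence by $p_t(x)$. In the regime $|x|\le t^{1/\alpha}$, the lower bound in \eqref{heat-estimate} gives $p_t(x)\gtrsim t^{-d/\alpha}$, so the bound from Proposition \ref{prop:unif} becomes
\[
\frac{\epsilon^a}{t^{(d+a)/\alpha}}=\frac{\epsilon^a}{t^{a/\alpha}}\cdot\frac{1}{t^{d/\alpha}}\lesssim\frac{\epsilon^a\, p_t(x)}{t^{a/\alpha}}.
\]
In the complementary regime $|x|>t^{1/\alpha}$, the lower bound in \eqref{heat-estimate} instead gives $p_t(x)\gtrsim t/|x|^{d+\alpha}$, and the bound from Proposition \ref{prop:x} can be rewritten as
\[
\frac{t\,\epsilon^a}{|x|^{d+\alpha+a}}=\frac{\epsilon^a}{|x|^{a}}\cdot\frac{t}{|x|^{d+\alpha}}\lesssim\frac{\epsilon^a}{t^{a/\alpha}}\cdot\frac{t}{|x|^{d+\alpha}}\lesssim\frac{\epsilon^a\, p_t(x)}{t^{a/\alpha}},
\]
where the crucial step is $1/|x|^a<1/t^{a/\alpha}$, which is just the hypothesis $|x|>t^{1/\alpha}$ raised to the power $a$.

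There is no real obstacle here: the only thing one needs to be careful about is that the two-sided estimate \eqref{heat-estimate} fails at $\alpha=2$, which is why the last inequality is stated only for $0<\alpha<2$; the two preceding bounds of the theorem remain valid at $\alpha=2$ since they do not invoke \eqref{heat-estimate}. Once the two cases are combined, the constants coming from \eqref{heat-estimate} are absorbed into $\lesssim$ and the claim \eqref{eq:combined} follows uniformly in $x\in\epsilon\Z^d$ and $\epsilon^\alpha\le t\le T$.
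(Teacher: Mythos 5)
Your proposal is correct and follows exactly the route the paper intends: the first two bounds are Propositions \ref{prop:unif} and \ref{prop:x} verbatim, and \eqref{eq:combined} is obtained by splitting at $|x|=t^{1/\alpha}$ and using the lower bound in \eqref{heat-estimate} ($p_t(x)\gtrsim t^{-d/\alpha}$ in one regime, $p_t(x)\gtrsim t/|x|^{d+\alpha}$ in the other) together with $|x|^{-a}\le t^{-a/\alpha}$ in the far regime. The paper's own proof merely asserts this "follows easily from \eqref{heat-estimate} and the first two inequalities"; you have supplied the same argument with the details written out.
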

\begin{proof}
We only need to justify the final inequality. But this follows easily from \eqref{heat-estimate} and the first two inequalities in the statement of the theorem.
\end{proof}
\begin{remark} Note that \eqref{eq:combined} is not true for $\alpha=2$.
\end{remark}
We end this section by describing a class of random walks whose characteristic functions satisfy both Assumptions \ref{cond1} and \ref{cond2}.
\begin{example}[$\alpha=2$]\normalfont \label{eg1} Consider a $d$ dimensional random walk with dislocation distribution 
given by $\mu = \mu_1\otimes \mu_2 \otimes \cdots \otimes \mu_d$ where $\mu_k$ are the dislocation distributions of 
one dimensional mean $0$ random walks. Further assume that each of the one dimensional walks has moments of order $d+4$. This
implies that the characteristic functions of each of the $\hat{\mu}_k$ are continuously differentiable up to order $d+4$, and have a Taylor expansion of the form $\hat\mu_k(z_k)=1-c_2 z_k^2 + c_3(iz_k)^3+\cdots + c_{d+3} (i z_k)^{d+3} +o(|z_k|^{d+4}).$ One can check that these random walks satisfy Assumptions \ref{cond1} and \ref{cond2} for any $0<a<1$.
\end{example}
\begin{example}[$0<\alpha<2$] \normalfont \label{eg2}
Consider
\begin{align*} 
\mu(\{j\})=\frac{c_1}{|j|^{d+\alpha}}\quad\text{whenever}\quad j\in \Z^d/\{0\}.
\end{align*}
where the constant $c_1$ is chosen so that the above is a probability measure. In this case Assumptions \ref{cond1} and \ref{cond2} hold with $a\le 2-\alpha$. In fact \eqref{eq:cond2} holds for all $k\ge 1$. For the reader's convenience, we present the argument below.
For $|z| \le \pi$
\begin{equation}\label{eq:ex:bd}
\left \vert \sum_{j \neq 0} \frac{1-\cos(z\cdot j)}{|j|^{d+\alpha}} - \int_{\R^d}\frac{1-\cos(z\cdot x)}{|x|^{d+\alpha}} \,\d x \right \vert 
\lesssim \begin{cases} |z|^{1+\alpha}, &0<\alpha<1, \\ |z|^2 \ln (|z|^{-1}), &  \alpha =1, \\ |z|^2, & 1<\alpha<2.\end{cases}
\end{equation}
We now give more details of this calculation. We split the integral over $\R^d$ into blocks of the
form $[j-\frac12, j+\frac12]^d,\, j \in \Z^d$. We have the bound
\bes
\left \vert \int_{[-\frac12,\frac12]^d }\frac{1-\cos(z\cdot x)}{|x|^{d+\alpha}} \,\d x \right \vert  \lesssim |z|^2,
\ees
when one uses the inequality $\vert 1-\cos(z\cdot x) \vert \le  |z|^2|x|^2$. We next bound 
\bes 
\sum_{j \ne 0} \int_{[j-\frac12,\,j+\frac12]^d} \left \vert \frac{1-\cos(z\cdot x)}{|x|^{d+\alpha}}-\frac{1-\cos(z\cdot j)}{|j|^{d+\alpha}} \right \vert \, \d x.
\ees
For this we use a first order Taylor approximation around each $j$. We thus need a bound on $|| \partial f/\partial x_i||_\infty$ and $||\partial^2 f/ \partial x_i \partial x_j ||_\infty$  in $[j-\frac12\,,j+\frac12]^d$ where
\bes
f(x) = \frac{1-\cos(z\cdot x)}{|x|^{d+\alpha}}.
\ees
One can check that 
\bes \begin{split}
\frac{\partial f}{\partial x_i } &= \frac{z_i\sin(z\cdot x)}{|x|^{d+\alpha}} -\frac{\{1-\cos(z\cdot x)\}(d+\alpha) x_i}{|x|^{d+\alpha+2}}, \\
\frac{\partial^2 f}{\partial x_i \partial x_j}&= -(d+\alpha)\frac{(z_ix_j+z_jx_i) \cdot \sin(z\cdot x)+\ind\{i=j\}\cdot[1-\cos(z\cdot x)]}{|x|^{d+\alpha+2}} \\
&\qquad \qquad -(d+\alpha)(d+\alpha+2) \frac{[1-\cos(z\cdot x)]x_ix_j}{|x|^{d+\alpha+4}}+\frac{z_i z_j \cos(z\cdot x)}{|x|^{d+\alpha}}.
\end{split}\ees
We therefore get 
\bes
\begin{split}
\sum_{j \ne 0} \int_{[j-\frac12,\,j+\frac12]^d} \left \vert\frac{\partial f}{\partial x_i } \right \vert  \d x &\le  \int_{|x|>\frac12} \left \vert \frac{z_i\sin(z\cdot x)}{|x|^{d+\alpha}}\right \vert+ \left \vert\frac{\{1-\cos(z\cdot x)\}(d+\alpha) x_i}{|x|^{d+\alpha+2}}\right \vert\, \d x \\
& \lesssim |z|^2 \int_{\frac12}^{|z|^{-1}} \frac{\d r}{r^{\alpha}} + |z|\int_{|z|^{-1}}^{\infty}\frac{\d r}{r^{1+\alpha}}  + \int_{|z|^{-1}}^{\infty}\frac{\d r}{r^{2+\alpha}},
\end{split}
\ees
where we use the bounds $|\sin(z\cdot x)| \le \min(1,\,|z||x|)$ and $|1-\cos(z\cdot x)| \le \min(1,\,|z|^2|x|^2)$. The integral of the second derivatives can be bound in a similar manner. This gives us \eqref{eq:ex:bd}.

Consider now the $C^\infty$ function 
\bes
g(x) = \frac{c_1\cdot\eta(x)}{|x|^{d+\alpha}}
\ees
where $\eta$ is a $C^\infty$ function with $\eta(x)=0$ for $|x|\le \frac14$ and $\eta(x)=1$ for $|x|\ge \frac12$.
The Fourier transform of $g$ is
\bes
\mathcal{F}g(\xi) = \int_{\R^d} g(x) e^{ i x\cdot \xi} \, \d x.
\ees
We now split
\be \label{eq:g:split}
g(x)= \frac{c_1}{|x|^{d+\alpha}} + \frac{c_1\cdot(\eta(x)-1)}{|x|^{d+\alpha}}
\ee
The Fourier transform of the first term (in the sense of distributions) is 
\be \label{eq:fourier:riesz}
\mathcal{F}\big(c_1|\cdot|^{-d-\alpha}\big) \, (\xi) = \nu |\xi|^{\alpha}
\ee
for some constant $\nu$. For a proof see pages 127-128 of \cite{graf-08}.  The Fourier transform of the second term on the right of \eqref{eq:g:split} is $C^\infty$
since it is the Fourier transform of a compact distribution. Thus $\mathcal F g$ is equal to the sum of $c_2|\xi|^\alpha$ and a $C^\infty$ function, and by an integration by parts argument one can show that $\mathcal F g$ is a rapidly decaying function. We can now use Poisson summation formula to conclude that for $|z| \le \pi$
\bes
c_1\sum_{j \ne 0} \frac{ \cos(z\cdot j)}{|j|^{d+\alpha}} = \nu|z|^\alpha +h(z)
\ees
where $h$ is a $C^\infty$ function, and thus 
\bes
1-\hat\mu(z) = - \nu |z|^\alpha+\big(1-h(z)\big).
\ees
It is an easy computation to show that 
\bes
\int_{\R^d} \frac{1-\cos(z\cdot x)}{|x|^{d+\alpha}}\, \d x = c_2|z|^\alpha.
\ees
Therefore by \eqref{eq:ex:bd} we can conclude that $\nu=-c_2$ and 
\bes
| 1- h(z) | \lesssim \begin{cases} |z|^{1+\alpha}, &0<\alpha<1, \\ |z|^2 \ln (|z|^{-1}), &  \alpha =1, \\ |z|^2, & 1<\alpha<2.\end{cases}
\ees
Thus $h(0)=1$ and $\partial h/\partial x_i(0)=0$ for all $i$. Take $\mathcal{D}(z)=1-h(z)$, which is a $C^\infty$
function whose Taylor expansion around $0$ has the zeroth and first order terms to be $0$. It thus satisfies \eqref{eq:muhat:zero} and \eqref{eq:cond2} with $a\le 2-\alpha$. We have therefore found a random walk, $X_t$ satisfying the required conditions for a fixed $\nu$ satisfying \eqref{eq:fourier:riesz}. For other values of $\nu$. we can simply take the same random walk but with a different time scale.

The argument above shows that walks of the following form would also satisfy Assumptions \ref{cond1} and \ref{cond2}:
\bes
\mu(\{j\}) = \sum_{i=1}^{m} \frac{c_i}{|j|^{d+\alpha_i}}, \quad j \ne 0,
\ees
for some $m\in \Z_+$, $ \alpha=\alpha_1<\alpha_2<\cdots< \alpha_m<2$ and appropriate positive constants $c_1,c_2,\cdots, c_m$.
\end{example}
\section{Proof of Theorem \ref{thm:main}} \label{sec:thm:main}

We begin this section with the following result.

\begin{proposition}\label{prop-cor}
For $x,\,y\in \R^d$, we have 
\begin{equation*}
\int_{\R^d}\int_{\R^d}p_t(x-w)p_t(y-z)f(w-z)\,\d w\d z\lesssim \frac{1}{t^{\beta/\alpha}}.
\end{equation*}

\end{proposition}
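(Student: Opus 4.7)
My plan is to exploit the stable-process semigroup structure to reduce the double integral to a single convolution, and then use scaling.

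First I would change variables. Setting $u = w-z$ in the inner integral and integrating out $z$ using the semigroup identity $(p_t * p_t)(v) = p_{2t}(v)$ for the stable heat kernel, the integral collapses to
\begin{equation*}
\int_{\R^d}\int_{\R^d} p_t(x-w) p_t(y-z) |w-z|^{-\beta}\, \d w\, \d z = \int_{\R^d} p_{2t}(x-y-u)\, |u|^{-\beta}\, \d u.
\end{equation*}
Thus it suffices to show that $\int_{\R^d} p_{2t}(\xi - u) |u|^{-\beta}\, \d u \lesssim t^{-\beta/\alpha}$ uniformly in $\xi \in \R^d$.

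Next I would use the scaling relation \eqref{scaling}, namely $p_{2t}(v) = (2t)^{-d/\alpha} p_1(v/(2t)^{1/\alpha})$, together with the substitution $u = (2t)^{1/\alpha} u'$. The factor $(2t)^{d/\alpha}$ from the Jacobian cancels the prefactor, while $|u|^{-\beta}$ produces $(2t)^{-\beta/\alpha} |u'|^{-\beta}$. Writing $\eta := \xi/(2t)^{1/\alpha}$, this gives
\begin{equation*}
\int_{\R^d} p_{2t}(\xi - u) |u|^{-\beta}\, \d u = (2t)^{-\beta/\alpha} \int_{\R^d} p_1(\eta - u') |u'|^{-\beta}\, \d u'.
\end{equation*}

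The remaining step is to show that $\int_{\R^d} p_1(\eta - u') |u'|^{-\beta}\, \d u'$ is bounded uniformly in $\eta$. I would split the integral into $|u'| \leq 1$ and $|u'| > 1$. On $\{|u'|\leq 1\}$ I use the fact that $p_1$ is bounded (immediate from \eqref{heat-estimate}) together with the local integrability $\int_{|u'|\le 1} |u'|^{-\beta}\, \d u' < \infty$ which holds because $\beta < d$. On $\{|u'|>1\}$ I use $|u'|^{-\beta} \leq 1$ and $\int_{\R^d} p_1(\eta - u')\, \d u' = 1$. The sum of these two contributions gives a constant independent of $\eta$, which completes the bound.

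There is no serious obstacle here; the two mild points to watch are (i) invoking the semigroup property for the stable kernel $p_t$, which is standard, and (ii) ensuring the split-integral bound on $\int p_1(\eta-u')|u'|^{-\beta}\,\d u'$ uses only $\beta < d$ (which is assumed throughout the paper), not the stronger condition $\beta<\alpha$. No boundedness in the $\eta$ variable is lost, so the final estimate is genuinely uniform in $x,y$.
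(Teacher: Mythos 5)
Your proof is correct and follows essentially the same route as the paper: the paper reduces the double integral to $\int_{\R^d} p_{2t}(w-(x-y))f(w)\,\d w$ by the semigroup property and then says "the result now follows by a change of variable and scaling properties," which is precisely the computation you have written out in full. The uniform-in-$\eta$ bound on $\int p_1(\eta-u')|u'|^{-\beta}\,\d u'$ via the split at $|u'|=1$ is the right way to finish, and it indeed only uses $\beta<d$.
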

\begin{proof}
By the semigroup property, we have 
\begin{align*}
\int_{\R^d}\int_{\R^d}p_t(x-w)&p_t(y-z)f(w-z)\,\d w\d z\\
&= \int_{\R^d} p_{2t}(w-(x-y)) f(w) \d w.
\end{align*}
The result now follows by a change of variable and scaling properties.
\end{proof}

Define
\be\begin{split} \label{eq:eta:eta'}
\eta := \frac{\alpha-\beta}{2} \qquad\text{and} \qquad\tilde{\eta}:= \frac{\alpha-\beta}{2\alpha}.
\end{split}\ee

We have the following H\"older continuity estimate. This can be read from \cite{boul-lahc-edda}. 
\begin{proposition}
For any  $m\geq 2$, we have 
\begin{align*}
\E|u_s(x)-u_t(y)|^m\lesssim |x-y|^{\eta m}+|s-t|^{\tilde{\eta}m}.
\end{align*}
\end{proposition}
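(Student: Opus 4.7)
The plan is to work with the mild formulation $u_t(x) = (p_t*u_0)(x) + N_t(x)$, where
$$N_t(x) := \int_0^t \int_{\R^d} p_{t-r}(x-w)\, \sigma(u_r(w))\, F(\d r\, \d w),$$
and to use the triangle inequality $\|u_s(x) - u_t(y)\|_m \le \|u_t(x) - u_t(y)\|_m + \|u_t(y) - u_s(y)\|_m$, treating the spatial and temporal increments separately. The deterministic parts $(p_t*u_0)(x) - (p_t*u_0)(y)$ and $[(p_t - p_s)*u_0](y)$ are controlled via Lemma \ref{ker-d} and elementary semigroup estimates on the bounded $u_0$; they give bounds strictly stronger than $|x-y|^{\eta m}$ and $|t-s|^{\tilde\eta m}$.

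For the spatial stochastic increment, Burkholder's inequality for Walsh integrals against colored noise, combined with the Lipschitz bound on $\sigma$ and the uniform moment estimate \eqref{eq:u:mom}, reduces the problem to controlling
$$J_{\rm sp}(x,y,t) := \int_0^t \d r \iint_{\R^{2d}} [p_{t-r}(x-w) - p_{t-r}(y-w)]\,[p_{t-r}(x-z) - p_{t-r}(y-z)]\, f(w-z)\,\d w\,\d z.$$
Using $\widehat{p_t}(\xi) = e^{-t\nu|\xi|^\alpha}$, Parseval, and $\hat f(\xi) = c_\beta|\xi|^{\beta-d}$, one shows $J_{\rm sp}$ equals a constant times $\int_{\R^d}|e^{i\xi\cdot x} - e^{i\xi\cdot y}|^2 (1 - e^{-2t\nu|\xi|^\alpha})|\xi|^{\beta - d - \alpha}\,\d\xi$. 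Splitting the $\xi$-integral at $|\xi| = |x-y|^{-1}$ with the bound $|e^{i\xi\cdot x} - e^{i\xi\cdot y}|^2 \le \min(4, |\xi|^2|x-y|^2)$ yields $J_{\rm sp} \lesssim |x-y|^{\alpha-\beta} = |x-y|^{2\eta}$. Raising to the $m/2$-th power recovers the required $|x-y|^{\eta m}$.

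For the temporal increment (taking $s < t$), decompose $N_t(y) - N_s(y) = A + B$ with
$$A = \int_s^t \!\int_{\R^d} p_{t-r}(y-w)\sigma(u_r(w))\,F(\d r\,\d w), \quad B = \int_0^s \!\int_{\R^d} [p_{t-r}(y-w) - p_{s-r}(y-w)]\sigma(u_r(w))\,F(\d r\,\d w).$$
Burkholder combined with Proposition \ref{prop-cor} (applied to the shifted correlation) gives $\|A\|_m^2 \lesssim \int_s^t (t-r)^{-\beta/\alpha}\,\d r \lesssim (t-s)^{1 - \beta/\alpha}$. For $B$, the analogous Fourier computation produces $\int_{\R^d} (1 - e^{-(t-s)\nu|\xi|^\alpha})^2 |\xi|^{\beta-d-\alpha}\,\d\xi$, and splitting at $|\xi| = (t-s)^{-1/\alpha}$ using $(1-e^{-a})^2 \le \min(1,a^2)$ again gives $(t-s)^{1-\beta/\alpha} = (t-s)^{2\tilde\eta}$. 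Combining both pieces yields $\|N_t(y) - N_s(y)\|_m \lesssim |t-s|^{\tilde\eta}$, hence the required $|t-s|^{\tilde\eta m}$ after taking the $m$-th power.

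The main technical obstacle is identifying the correct cutoff scale in $\xi$-space for each increment and observing that integrability at $\infty$ of the Fourier integrals requires precisely the assumption $\beta<\alpha$ from \eqref{eq:be:al}; the singularity of $\hat f$ at the origin is harmless because the factor $|e^{i\xi\cdot x} - e^{i\xi\cdot y}|^2$ or $(1-e^{-(t-s)\nu|\xi|^\alpha})^2$ vanishes quadratically there. Passage from the second-moment bound to the general $\|\cdot\|_m$ bound is then routine via Burkholder's inequality and the uniform moment bound \eqref{eq:u:mom}.
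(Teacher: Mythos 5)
First, note that the paper does not prove this proposition at all; it simply cites \cite{boul-lahc-edda}. Your argument therefore has to stand on its own, and its skeleton --- mild formulation, splitting into deterministic and stochastic parts, Burkholder, Plancherel with $\hat f(\xi)=c_\beta|\xi|^{\beta-d}$, and the cutoffs at $|\xi|=|x-y|^{-1}$ and $|\xi|=(t-s)^{-1/\alpha}$ --- is the standard one and produces the correct exponents $2\eta=\alpha-\beta$ and $2\tilde\eta=1-\beta/\alpha$. However, there is a genuine gap in the reduction to $J_{\rm sp}$ (and to the analogous Fourier integral for $B$). Burkholder's inequality bounds $\E|M_t|^m$ by $\E[\langle M\rangle_t^{m/2}]$, and when you apply Minkowski's integral inequality to pull out $\|\sigma(u_r(w))\sigma(u_r(z))\|_{m/2}$ you necessarily end up with
\begin{equation*}
\int_0^t \d r\iint_{\R^{2d}}\big|p_{t-r}(x-w)-p_{t-r}(y-w)\big|\,\big|p_{t-r}(x-z)-p_{t-r}(y-z)\big|\,f(w-z)\,\d w\,\d z,
\end{equation*}
with absolute values on the signed kernels. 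Since $f\ge 0$, this quantity dominates the signed bilinear form $J_{\rm sp}$ that Parseval evaluates, so your Fourier identity computes a \emph{lower} bound for the quantity you actually need to control; the step ``Burkholder reduces the problem to $J_{\rm sp}$'' is not valid as written. (For the term $A$ the kernel is nonnegative and there is no issue.)

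The gap is repairable with tools already in the paper, at the price of abandoning Plancherel for the signed differences. Bound $|p_a(x-w)-p_a(y-w)|\le \min\big(p_a(x-w)+p_a(y-w),\, c\,|x-y|\,a^{-1/\alpha}\int_0^1 p_a((x_\lambda-w)/2)\,\d\lambda\big)$ with $x_\lambda=y+\lambda(x-y)$, using Lemma \ref{ker-d}; then Proposition \ref{prop-cor} gives the integrand bound $\min(1,|x-y|^2a^{-2/\alpha})\,a^{-\beta/\alpha}$, and integrating this minimum in $a$ over $(0,t)$ (splitting at $a=|x-y|^\alpha$) still yields exactly $|x-y|^{\alpha-\beta}$, so the endpoint exponent $\eta$ survives; the same device with $|\partial_a p_a(v)|\lesssim a^{-1}p_a(v/2)$ handles $B$. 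Separately, your claim that the deterministic increments are ``strictly stronger'' is false for a general bounded $u_0$: the gradient bound only gives $|(p_t*u_0)(x)-(p_t*u_0)(y)|\lesssim |x-y|\,t^{-1/\alpha}$, which degenerates as $t\downarrow 0$ (take $u_0$ an indicator). The proposition as used in the paper is applied with $u_0\equiv 1$, where the deterministic part is constant; for general bounded data one must either restrict to $t,s\ge t_0>0$ or assume $u_0$ H\"older continuous.
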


The proof of Theorem \ref{thm:main} will involve several approximations which we will analyse in the following Lemmas. For the sake of clarity, they will be proved under the assumption that $u_0\equiv1$. We will also use the fact that $\sup_x\E|u_t(x)|^m$ is uniformly bounded for $0<t<T$.  

The first approximation is a step function approximation to $u$ which is constant over rectangles $C^{(\epsilon)}(\epsilon k), \, k \in\Z^d$. For $x\in \R^d$, set
\bes
u_t^{(\epsilon)}(\epsilon[x/\epsilon]) := 1+\sum_{k\in \Z^d} \int_0^{t-\epsilon^{\alpha}}\int_{C^{(\epsilon)}(\epsilon k)} p_{t-s}\left(y-\epsilon[x/\epsilon]\right) \sigma\big(u_s(\epsilon k)\big)\, F(\d s \, \d y),
\ees
if $t \ge \epsilon^\alpha$ and $0$ otherwise. If we set $\gamma(y):=\epsilon k$ when $y\in C^{(\epsilon)}(\epsilon k)$, then the above simplifies to
\bes
u_t^{(\epsilon)}(\epsilon[x/\epsilon]) := 1+ \int_0^{t-\epsilon^{\alpha}}\int_{\R^d} p_{t-s}(y-\epsilon[x/\epsilon]) \sigma\big(u_s(\gamma(y))\big)\, F(\d s \, \d y).
\ees
It is intuitively clear that $u^{(\epsilon)}$ should be close to $u$. The following lemma makes it precise.

\begin{lemma}
For $t>0$ and $x\in \R^d$, we have 
\begin{equation*}
\E \left|u_t(\epsilon[x/\epsilon])-u_t^{(\epsilon)}(\epsilon[x/\epsilon])\right|^m\lesssim \epsilon^{\eta m}.
\end{equation*}
\end{lemma}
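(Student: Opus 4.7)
The plan is to write $u_t(\epsilon[x/\epsilon]) - u_t^{(\epsilon)}(\epsilon[x/\epsilon])$ as a sum of two stochastic integrals, bound each in $L^m$ via the Burkholder--Davis--Gundy inequality for colored-noise integrals, and then use Proposition \ref{prop-cor} together with the H\"older continuity estimate for $u$.

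Concretely, let $\xtil := \epsilon[x/\epsilon]$. Splitting the time integral and inserting $\pm \sigma(u_s(\gamma(y)))$ gives the decomposition
\begin{equation*}
u_t(\xtil) - u_t^{(\epsilon)}(\xtil) = \mathcal{I}_1 + \mathcal{I}_2,
\end{equation*}
where
\begin{equation*}
\mathcal{I}_1 = \int_{t-\epsilon^{\alpha}}^{t}\!\!\int_{\R^d} p_{t-s}(y-\xtil)\,\sigma(u_s(y))\,F(\d s\,\d y),
\end{equation*}
\begin{equation*}
\mathcal{I}_2 = \int_{0}^{t-\epsilon^{\alpha}}\!\!\int_{\R^d} p_{t-s}(y-\xtil)\,[\sigma(u_s(y))-\sigma(u_s(\gamma(y)))]\,F(\d s\,\d y).
\end{equation*}

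For $\mathcal{I}_1$, BDG combined with the Lipschitz property of $\sigma$ and the uniform moment bound \eqref{eq:u:mom} yields
\begin{equation*}
\|\mathcal{I}_1\|_m^2 \lesssim \int_{t-\epsilon^{\alpha}}^{t}\!\!\int_{\R^d}\!\!\int_{\R^d} p_{t-s}(y_1-\xtil)\,p_{t-s}(y_2-\xtil)\,f(y_1-y_2)\,\d y_1\,\d y_2\,\d s.
\end{equation*}
Proposition \ref{prop-cor} bounds the inner double integral by $(t-s)^{-\beta/\alpha}$, and integrating in $s$ over a window of length $\epsilon^{\alpha}$ (using $\beta<\alpha$) gives $\|\mathcal{I}_1\|_m^2 \lesssim \epsilon^{\alpha-\beta} = \epsilon^{2\eta}$. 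For $\mathcal{I}_2$, the same BDG step produces the same kernel integral, but now each factor $\|\sigma(u_s(y_i))-\sigma(u_s(\gamma(y_i)))\|_m$ is controlled via the Lipschitz property of $\sigma$ and the spatial H\"older estimate of the previous proposition, since $|y_i-\gamma(y_i)|\lesssim\epsilon$:
\begin{equation*}
\|\sigma(u_s(y_i))-\sigma(u_s(\gamma(y_i)))\|_m \lesssim \|u_s(y_i)-u_s(\gamma(y_i))\|_m \lesssim \epsilon^{\eta}.
\end{equation*}
Pulling these factors out and again applying Proposition \ref{prop-cor} leaves
\begin{equation*}
\|\mathcal{I}_2\|_m^2 \lesssim \epsilon^{2\eta}\int_0^{t-\epsilon^{\alpha}}(t-s)^{-\beta/\alpha}\,\d s \lesssim \epsilon^{2\eta}\,T^{1-\beta/\alpha}.
\end{equation*}

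Combining the two bounds gives $\|u_t(\xtil)-u_t^{(\epsilon)}(\xtil)\|_m \lesssim \epsilon^{\eta}$, which is the claimed estimate after raising to the $m$-th power. I do not anticipate any serious obstacle here; the only point that needs a little care is to verify that the $s$-integral of $(t-s)^{-\beta/\alpha}$ near $s=t$ truly contributes only $\epsilon^{\alpha-\beta}$ (which uses $\beta<\alpha$), and that the H\"older estimate from \cite{boul-lahc-edda} applies to the discrepancy $y-\gamma(y)$ at the given spatial scale $\epsilon$, which it does since $|y-\gamma(y)|\le \sqrt{d}\,\epsilon/2$.
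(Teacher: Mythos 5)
Your proposal is correct and follows essentially the same route as the paper: the identical two-term decomposition (the short time window $[t-\epsilon^\alpha,t]$ plus the $\sigma(u_s(y))-\sigma(u_s(\gamma(y)))$ discrepancy on $[0,t-\epsilon^\alpha]$), Burkholder's inequality, Proposition \ref{prop-cor} to reduce the kernel double integral to $(t-s)^{-\beta/\alpha}$, and the spatial H\"older estimate to extract $\epsilon^{2\eta}$ from the second term. The only cosmetic difference is that the paper bounds the product of the two $\sigma$-increments in $\|\cdot\|_{m/2}$ rather than bounding each factor in $\|\cdot\|_m$ separately, which is equivalent by Cauchy--Schwarz.
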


\begin{proof}
Using the mild formulation of the solution and the above definition, we have
\begin{align*}
u_t(\epsilon[x/\epsilon])-u_t^{(\epsilon)}(\epsilon[x/\epsilon])&= \int_0^{t-\epsilon^{\alpha}}\int_{\R^d} p_{t-s}(y-\epsilon[x/\epsilon]) [\sigma\big(u_s(y)\big)-\sigma\big(u_s(\gamma(y))\big)]\, F(\d s \, \d y)\\
&\quad +\int_{t-\epsilon^\alpha}^t\int_{\R^d}p_{t-s}(y-\epsilon[x/\epsilon]) \sigma\big(u_s(y)\big)\, F(\d s \, \d y).
\end{align*}
An application of Burkholder's inequality together with the assumption on $\sigma$ yield
\begin{align*}
&\left\|u_t(\epsilon[x/\epsilon])-u_t^{(\epsilon)}(\epsilon[x/\epsilon])\right\|_m^2\\
&\lesssim\int_0^{t-\epsilon^{\alpha}}\int_{\R^d\times \R^d} p_{t-s}(y-\epsilon[x/\epsilon])p_{t-s}(w-\epsilon[x/\epsilon])f(y-w) \cC_{s}(y,\,w)\, \d s \, \d y\, \d w\\
&\;+\int_{t-\epsilon^\alpha}^t\int_{\R^d\times \R^d}p_{t-s}(y-\epsilon[x/\epsilon])p_{t-s}(w-\epsilon[x/\epsilon]) f(y-w)\left\|\sigma\big(u_s(y)\big)\sigma\big(u_s(w)\big)\right\|_{m/2}\, \d s \, \d y\,\d w\\
& =I_1+I_2,
\end{align*}
where $\cC_{s}(y,\,w):=\left\|\left[\sigma\big(u_s(y)\big)-\sigma\big(u_s(\gamma (y))\big)\right] \cdot \left[\sigma\big(u_s(w)\big)-\sigma\big(u_s(\gamma(w))\big)\right]\right\|_{m/2}.$ An application of the previous proposition yields $\cC_{s}(y,\,w)\leq c_1\epsilon^{2\eta}$ which we use in the following,  
\begin{align*}
I_1&\lesssim \epsilon^{2\eta }\int_0^{t-\epsilon^{\alpha}}\int_{\R^d\times \R^d} p_{t-s}(y-\epsilon[x/\epsilon])p_{t-s}(w-\epsilon[x/\epsilon])f(y-w)\,\d s \, \d y\, \d w.
\end{align*}
By translation invariance the right hand side does not depend on $\epsilon[x/\epsilon]$, and so
\begin{align*}
I_1\lesssim \epsilon^{2\eta }.
\end{align*}
We use the fact that solution has finite moments to bound $I_2$; 
\begin{align*}
I_2
&\lesssim \int_{t-\epsilon^\alpha}^t\int_{\R^d\times \R^d}p_{t-s}(y-\epsilon[x/\epsilon])p_{t-s}(w-\epsilon[x/\epsilon])f(y-w)\,\d s \, \d y\,\d w\\
&\lesssim \epsilon^{2\eta }.
\end{align*}
The proof is complete.
\end{proof}
We now turn to the second lemma. Here we discretize the density of the stable process. We set 
\bes
v_t^{(\epsilon)}(\epsilon[x/\epsilon]) := 1+\sum_{k\in \Z^d} \int_0^{t-\epsilon^{\alpha}}\int_{C^{\epsilon}(\epsilon k)} p_{t-s}(\epsilon k-\epsilon[x/\epsilon]) \sigma\big(u_s(\epsilon k)\big)\, F(\d s \, \d y),
\ees
for $t\ge \epsilon^\alpha$ and $0$ otherwise.

\begin{lemma}For any $t>0$ and $x\in \R^d$,
\begin{equation*}  
\E\left |u_t^{(\epsilon)}(\epsilon[x/\epsilon])-v_t^{(\epsilon)}(\epsilon[x/\epsilon])\right|^m \lesssim \epsilon^{\eta m}.
\end{equation*}
\end{lemma}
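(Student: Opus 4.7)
The plan is to follow the template of the preceding lemma, but now replace the H\"older regularity of $u$ by the spatial smoothness of the heat kernel $p_{t-s}$. I would first write the difference as the single Walsh integral
$$u_t^{(\epsilon)}(\epsilon[x/\epsilon])-v_t^{(\epsilon)}(\epsilon[x/\epsilon]) = \int_0^{t-\epsilon^\alpha}\!\!\int_{\R^d}\Delta_{t-s}(y)\,\sigma\big(u_s(\gamma(y))\big)\,F(\d s\,\d y),$$
where $\Delta_r(y):=p_r(y-\epsilon[x/\epsilon])-p_r(\gamma(y)-\epsilon[x/\epsilon])$. Burkholder's inequality together with the uniform moment bound on $\sigma(u_s)$ then reduces the claim to the deterministic estimate
$$\mathcal I\;:=\;\int_0^{t-\epsilon^\alpha}\!\!\int_{\R^d\times\R^d}|\Delta_{t-s}(y)|\,|\Delta_{t-s}(w)|\,f(y-w)\,\d y\,\d w\,\d s\;\lesssim\;\epsilon^{2\eta}.$$

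The crucial new ingredient is a pointwise bound on $\Delta_r$. Applying the mean value theorem along the segment from $\gamma(y)$ to $y$ (of length $\le \epsilon\sqrt d$) together with the gradient estimate of Lemma \ref{ker-d} gives $|\Delta_r(y)|\lesssim \epsilon\cdot p_r((\xi-\epsilon[x/\epsilon])/2)/r^{1/\alpha}$ for some point $\xi$ on that segment. The two-sided bound \eqref{heat-estimate} yields $p_r(z/2)\lesssim p_r(z)$ uniformly in $z$, and since both $\xi-\epsilon[x/\epsilon]$ and $y-\epsilon[x/\epsilon]$ lie inside the common $\epsilon$-box centered at $\gamma(y)-\epsilon[x/\epsilon]$ and $r\ge\epsilon^\alpha$, Lemma \ref{scale} gives $p_r(\xi-\epsilon[x/\epsilon])\asymp p_r(y-\epsilon[x/\epsilon])$. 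Chaining these comparisons yields the clean bound
$$|\Delta_r(y)|\lesssim \frac{\epsilon}{r^{1/\alpha}}\,p_r(y-\epsilon[x/\epsilon]).$$

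Plugging this into $\mathcal I$ and using Proposition \ref{prop-cor} on the spatial integral gives
$$\mathcal I\lesssim \epsilon^2\int_0^{t-\epsilon^\alpha}(t-s)^{-(2+\beta)/\alpha}\,\d s\;=\;\epsilon^2\int_{\epsilon^\alpha}^t r^{-(2+\beta)/\alpha}\,\d r.$$
Since $0<\alpha\le 2$ and $\beta>0$, we have $(2+\beta)/\alpha>1$, so the $r$-integral is dominated by its lower endpoint and is comparable to $(\epsilon^\alpha)^{1-(2+\beta)/\alpha}=\epsilon^{\alpha-2-\beta}$. Hence $\mathcal I\lesssim \epsilon^{\alpha-\beta}=\epsilon^{2\eta}$, and raising to the $m/2$-th power delivers the desired bound $\E|u_t^{(\epsilon)}-v_t^{(\epsilon)}|^m\lesssim\epsilon^{\eta m}$. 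The main obstacle is not combinatorial but technical: one must chain together Lemma \ref{ker-d}, the comparison $p_r(z/2)\lesssim p_r(z)$, and Lemma \ref{scale} to convert the gradient estimate evaluated at the arbitrary intermediate point $\xi$ into a clean bound in terms of $p_r(y-\epsilon[x/\epsilon])$, so that Proposition \ref{prop-cor} can be applied without extra factors spoiling the rate.
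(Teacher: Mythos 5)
Your proof is correct and follows essentially the same route as the paper: mean value theorem plus Lemma \ref{ker-d} and Lemma \ref{scale} to control the kernel increment, then Burkholder, Proposition \ref{prop-cor}, and the elementary time integral $\epsilon^2\int_{\epsilon^\alpha}^t r^{-(2+\beta)/\alpha}\,\d r\lesssim\epsilon^{\alpha-\beta}$. The one caveat is your intermediate claim $p_r(z/2)\lesssim p_r(z)$, which rests on \eqref{heat-estimate} and therefore fails for $\alpha=2$; it is also unnecessary, since the scaling identity $p_r(z/2)=2^d p_{2^\alpha r}(z)$ (equivalently, carrying the halved argument directly into Proposition \ref{prop-cor}, as the paper does) yields the same $(t-s)^{-\beta/\alpha}$ rate for all $\alpha\le 2$.
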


\begin{proof}
We obviously have 
\begin{align*}
u_t^{(\epsilon)}(\epsilon[x/\epsilon])&-v_t^{(\epsilon)}(\epsilon[x/\epsilon])\\
&=\int_0^{t-\epsilon^{\alpha}}\int_{\R^d} \left[p_{t-s}(y-\epsilon[x/\epsilon])-p_{t-s}(\gamma(y)-\epsilon[x/\epsilon])\right] \sigma\big(u_s(\gamma(y))\big)\, F(\d s \, \d y).
\end{align*} 
As in the previous lemma, the following holds,
\begin{align*}
&\left\|u_t^{(\epsilon)}(\epsilon[x/\epsilon])-v_t^{(\epsilon)}(\epsilon[x/\epsilon])\right\|_m^2\\
&\lesssim\int_0^{t-\epsilon^{\alpha}}\int_{\R^d\times \R^d}\Big [p_{t-s}(y-\epsilon[x/\epsilon])-p_{t-s}(\gamma(y)-\epsilon[x/\epsilon])\Big]\Big[p_{t-s}(w-\epsilon[x/\epsilon])\\
&\hspace{2cm}-p_{t-s}(\gamma(w)-\epsilon[x/\epsilon]) \Big]\cdot \left\|
\sigma\big(u_s(\gamma(y))\big)\sigma\big(u_s(\gamma(w))\big)\right\|_{m/2}f(y-w)\, \d s \, \d y\,\d w\\
&\lesssim \int_0^{t-\epsilon^{\alpha}}\int_{\R^d\times \R^d} \Big[p_{t-s}(y)-p_{t-s}(\gamma(y))\Big]\Big[p_{t-s}(w)-p_{t-s}(\gamma(w))\Big]f(y-w)\,\d s\,\d y\,\d w.
\end{align*}
By the mean value theorem, Lemma \ref{ker-d}, Lemma \ref{scale} and Proposition \ref{prop-cor}, we have 
\begin{align*}
\int_{\R^d\times \R^d} &\big[p_{t-s}(y)-p_{t-s}(\gamma(y))\big]\big[p_{t-s}(w)-p_{t-s}(\gamma(w))\big]f(y-w)\,\d s\,\d y\,\d w\\
&\lesssim \frac{\epsilon^2}{(t-s)^{2/\alpha}}\int_{\R^d\times \R^d} p_{t-s}(y/2)p_{t-s}(w/2)f(y-w)\,\d s\,\d y\,\d w \\
&\lesssim \frac{\epsilon^2}{(t-s)^{\frac{2}{\alpha}}(t-s)^{\frac{\beta}{\alpha}}}.
\end{align*}
 The rest of the proof is elementary calculus.
\end{proof}
The next proposition is crucial in that it determines the rate of convergence in Theorem \ref{thm:main}. Here we replace the discretized density by the transition probabilities for the random walk. Set 
\bes
V_t^{(\epsilon)}(\epsilon[x/\epsilon]) := 1+\sum_{k\in \Z^d} \int_0^{t-\epsilon^{\alpha}} \int_{C^{(\epsilon)}(\epsilon k)} \frac{P^{(\epsilon)}_{t-s}(\epsilon k-\epsilon[x/\epsilon])}{\epsilon^d} \sigma\big(u_s(\epsilon k)\big)\, F(\d s \, \d y),
\ees
for $t\ge \epsilon^\alpha$ and $0$ otherwise, where
\begin{equation*}
P_{t}^{(\epsilon)}(x) := P(\epsilon X_{t/\epsilon^\alpha} =x) , \quad \text{ for } x \in \epsilon \Z^d.
\end{equation*}

\begin{lemma} \label{lem:v-V} Assume that Assumptions \ref{cond1} and \ref{cond2} hold. For all $x\in \R^d$,
\begin{equation*}
\E \left|V_t^{(\epsilon)}(\epsilon[x/\epsilon])-v_t^{(\epsilon)}(\epsilon[x/\epsilon])\right|^m \lesssim\epsilon^{\rho m},
\end{equation*}
where $\rho:=a\wedge\eta$.
\end{lemma}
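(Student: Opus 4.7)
The plan is to follow the template used in the two preceding lemmas and reduce everything to an $L^{m/2}$ estimate via Burkholder. Write
\[
V_t^{(\epsilon)}(\epsilon[x/\epsilon])-v_t^{(\epsilon)}(\epsilon[x/\epsilon])= \int_0^{t-\epsilon^\alpha}\int_{\R^d} D_{t-s}(\gamma(y))\,\sigma\big(u_s(\gamma(y))\big)\,F(\d s\,\d y),
\]
where
\[
D_{t-s}(\epsilon k):=\frac{P^{(\epsilon)}_{t-s}(\epsilon k-\epsilon[x/\epsilon])}{\epsilon^d}-p_{t-s}(\epsilon k-\epsilon[x/\epsilon]).
\]
Burkholder, together with the uniform moment bound on $u_s$ and Lipschitzness of $\sigma$, gives
\[
\big\|V_t^{(\epsilon)}-v_t^{(\epsilon)}\big\|_m^2 \lesssim \int_0^{t-\epsilon^\alpha}\int_{\R^d\times\R^d} |D_{t-s}(\gamma(y))|\,|D_{t-s}(\gamma(w))|\,f(y-w)\,\d y\,\d w\,\d s.
\]

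The heart of the argument is to bound $|D_{t-s}(\epsilon k)|$ by a multiple of $p_{t-s}(\epsilon k-\epsilon[x/\epsilon])$ with a factor $\epsilon^{\rho}(t-s)^{-\rho/\alpha}$ in front, for $\rho=a\wedge\eta$. For $0<\alpha<2$ this is an immediate consequence of the combined local limit estimate \eqref{eq:combined}, which yields
\[
|D_{t-s}(\epsilon k)| \lesssim \frac{\epsilon^{a}\,p_{t-s}(\epsilon k-\epsilon[x/\epsilon])}{(t-s)^{a/\alpha}}.
\]
Since $s\leq t-\epsilon^\alpha$ implies $\epsilon/(t-s)^{1/\alpha}\leq 1$, I may trade any excess power $a-\rho\geq 0$ to obtain
\[
|D_{t-s}(\epsilon k)| \lesssim \frac{\epsilon^{\rho}\,p_{t-s}(\epsilon k-\epsilon[x/\epsilon])}{(t-s)^{\rho/\alpha}},\qquad \rho\leq a.
\]
For $\alpha=2$ the combined bound is unavailable, so I split the $y$-integral over $|\epsilon k-\epsilon[x/\epsilon]|\leq (t-s)^{1/\alpha}$ and its complement, applying the uniform bound of Theorem \ref{local-limit} on the former and the large-$x$ bound on the latter; in both regimes one recovers the same control up to a multiple of $p_{t-s}$.

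With the pointwise estimate in hand I use Lemma \ref{scale} to replace $p_{t-s}(\epsilon k-\epsilon[x/\epsilon])$ by $p_{t-s}(y-\epsilon[x/\epsilon])$ on each cell $C^{(\epsilon)}(\epsilon k)$, so that
\[
\big\|V_t^{(\epsilon)}-v_t^{(\epsilon)}\big\|_m^2 \lesssim \epsilon^{2\rho}\int_0^{t-\epsilon^\alpha}\frac{1}{(t-s)^{2\rho/\alpha}}\int_{\R^d\times\R^d} p_{t-s}(y-\epsilon[x/\epsilon])\,p_{t-s}(w-\epsilon[x/\epsilon])\,f(y-w)\,\d y\,\d w\,\d s.
\]
Proposition \ref{prop-cor} reduces the spatial double integral to $(t-s)^{-\beta/\alpha}$, leaving
\[
\big\|V_t^{(\epsilon)}-v_t^{(\epsilon)}\big\|_m^2 \lesssim \epsilon^{2\rho}\int_0^{t-\epsilon^\alpha}\frac{\d s}{(t-s)^{(2\rho+\beta)/\alpha}}.
\]

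The main obstacle is the convergence of this final time integral. The exponent $(2\rho+\beta)/\alpha$ is strictly less than $1$ precisely when $\rho<\eta=(\alpha-\beta)/2$, in which case the integral is bounded uniformly in $t\in[\epsilon^\alpha,T]$ and one directly obtains $\epsilon^{2\rho}$. At the endpoint $\rho=\eta$ the integral is only logarithmically divergent, contributing a harmless $\log(T/\epsilon^\alpha)$ factor which may either be absorbed into the constant by shrinking $\rho$ infinitesimally, or retained in the stated bound. Raising both sides to the $m/2$ power gives the announced $\epsilon^{\rho m}$ bound with $\rho=a\wedge\eta$.
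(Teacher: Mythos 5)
Your route for $0<\alpha<2$ is genuinely different from the paper's: you dominate the local limit error pointwise by the heat kernel via the combined bound \eqref{eq:combined}, then invoke Lemma \ref{scale} and Proposition \ref{prop-cor}, whereas the paper never uses \eqref{eq:combined} here; it splits the lattice sum into three regions according to whether $|k|,|l|$ exceed $s^{1/\alpha}/\epsilon$, applies Proposition \ref{prop:unif} or \ref{prop:x} in each region, and sums the Riesz convolutions by hand. Your version is cleaner, but the ``trading'' step damages it. By downgrading $\epsilon^{a}(t-s)^{-a/\alpha}$ to $\epsilon^{\rho}(t-s)^{-\rho/\alpha}$ before integrating in time, you land exactly on the critical exponent $(2\eta+\beta)/\alpha=1$ whenever $a\ge\eta$, and the resulting $\log(1/\epsilon)$ cannot be ``absorbed into the constant'': the lemma asserts the bound with $\rho=a\wedge\eta$ exactly, and shrinking $\rho$ proves a strictly weaker statement. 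The fix is to not trade: keep the exponent $a$ and evaluate $\epsilon^{2a}\int_{\epsilon^\alpha}^{t} r^{-(2a+\beta)/\alpha}\,\d r$. If $a<\eta$ the integral is $O(1)$ and you get $\epsilon^{2a}=\epsilon^{2\rho}$; if $a>\eta$ it is $O(\epsilon^{\alpha-2a-\beta})$ and you get $\epsilon^{\alpha-\beta}=\epsilon^{2\eta}=\epsilon^{2\rho}$. Only the knife-edge $a=\eta$ produces a genuine logarithm (a boundary case the paper's own three-region computation does not escape either), so done this way your argument matches the paper's conclusion in the generic cases.

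The more serious gap is $\alpha=2$, which the lemma must cover (see Example \ref{eg1}). The paper explicitly remarks that \eqref{eq:combined} fails for $\alpha=2$, and your proposed patch — that in the outer region $|\epsilon k-\epsilon[x/\epsilon]|>(t-s)^{1/\alpha}$ the bound $t\epsilon^{a}|x|^{-d-\alpha-a}$ of Proposition \ref{prop:x} ``recovers the same control up to a multiple of $p_{t-s}$'' — is false there: the Gaussian kernel decays faster than any polynomial, so no constant multiple of $p_{t-s}(x)$ dominates $t\epsilon^{a}|x|^{-d-\alpha-a}$ uniformly in the tail. Consequently the reduction to Proposition \ref{prop-cor} breaks down in exactly the regime where the Riesz kernel's fat tail matters. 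For $\alpha=2$ you must handle the outer sums directly, i.e.\ carry out the analogues of the paper's $\mathcal{A}_2$ and $\mathcal{A}_3$: sum $t\epsilon^{a}|\epsilon l|^{-d-\alpha-a}$ against $\epsilon^{2d}\big(\epsilon^\beta+|\epsilon(k-l)|^\beta\big)^{-1}$ over the relevant ranges. As written, your proof establishes the lemma only for $0<\alpha<2$ and, within that range, only after the reordering described above.
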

\begin{proof}
We begin by writing 
\begin{align*}
&V_t^{(\epsilon)}(\epsilon[x/\epsilon])-v_t^{(\epsilon)}(\epsilon[x/\epsilon])\\
&\quad =\sum_{k\in \Z^d} \int_0^{t-\epsilon^{\alpha}} \int_{C^{(\epsilon)}(\epsilon k)} \left[\frac{P^{(\epsilon)}_{t-s}(\epsilon k-\epsilon[x/\epsilon])}{\epsilon^d}-p_{t-s}(\epsilon k-\epsilon[x/\epsilon])\right] \sigma\big(u_s(\epsilon k)\big)\, F(\d s \, \d y). 
\end{align*}
As in the proof of the previous lemmas, we take the $m$th moment and use Burkholder's inequality. 
%
%
%
%
%
\be \begin{split} \label{eq:v-V}
&\left\| v_t^{(\epsilon)}\big(\epsilon[x/\epsilon]\big)- V_t^{(\epsilon)}\big(\epsilon[x/\epsilon]\big) \right\|_m^2   \\
& \lesssim \int_{\epsilon^\alpha}^t \d s\,\sum_{k,l \in \Z^d}\left\vert p_{s}(\epsilon k)-\epsilon^{-d}P_{s}^{(\epsilon)}(\epsilon k)\right\vert  \left\vert p_{s}(\epsilon l)-\epsilon^{-d}P_{s}^{(\epsilon)}(\epsilon l)\right\vert \int_{C^{(\epsilon)}(\epsilon k)}\d u \int_{C^{(\epsilon)}(\epsilon l)} \d v\, f(u-v).
\end{split}\ee
We have the bound
\be \label{eq:f:int}
\int_{C^{(\epsilon)}(\epsilon k)}\d u\, \int_{C^{(\epsilon)}(\epsilon l)}\d v \, f(u-v) \lesssim \frac{\epsilon^{2d}}{\epsilon^\beta +| \epsilon(k-l)|^\beta}.
\ee
We split the right hand side of \eqref{eq:v-V} as
\bes
\int_{\epsilon^\alpha}^t \d s\left[\sum_{|k|\le\frac{s^{1/\alpha}}{\epsilon},\, |l|\le\frac{s^{1/\alpha}}{\epsilon}}\cdots\, +\quad 2\sum_{|k|\le\frac{s^{1/\alpha}}{\epsilon},\, |l|>\frac{s^{1/\alpha}}{\epsilon}}\cdots \, + \sum_{|k|>\frac{s^{1/\alpha}}{\epsilon},\, |l|>\frac{s^{1/\alpha}}{\epsilon}}\cdots  \right] =: \mathcal{A}_1+2\mathcal{A}_2+\mathcal{A}_3,
\ees
where $\mathcal{A}_1,\, \mathcal{A}_2$ and $\mathcal{A}_3$ correspond to the first, second and third sums. We bound each of $\mathcal{A}_1,\, \mathcal{A}_2$ and $\mathcal{A}_3$ separately. 
 Our strategy is as follows. We will bound $\big\vert p_s(\epsilon k)- \epsilon^{-d} P_s^{(\epsilon)}(\epsilon k)\big \vert$ using Proposition \ref{prop:unif} for  $|k|\le s^{1/\alpha}/\epsilon$ and using Proposition \ref{prop:x} for $|k|> s^{1/\alpha}/\epsilon$.  We start with
\bes
\begin{split}
\mathcal{A}_1 & \lesssim \epsilon^{2d-\beta+2a}\int_{\epsilon^\alpha}^{t} \frac{\d s}{s^{2(a+d)/\alpha}} \sum_{|k|\le\frac{s^{1/\alpha}}{\epsilon},\, |l|\le\frac{s^{1/\alpha}}{\epsilon}} \frac{1}{1+|k-l|^\beta}.
\end{split}
\ees
Fixing $k$ and summing over $l$ gives us a bound of $(s^{1/\alpha}\epsilon^{-1})^{d-\beta}$. Thus the integrand is bounded by a constant times $\epsilon^{\beta-2d}s^{-(2a+\beta)/\alpha}$. We thus have
\bes
\mathcal{A}_1  \lesssim \epsilon^{2\rho}.
\ees
Next we consider $\mathcal{A}_2$.  
\bes\begin{split}
\mathcal{A}_2  &\lesssim \epsilon^{2d-\beta+2a}\int_{\epsilon^\alpha}^t \frac{\d s}{s^{(a+d-\alpha)/\alpha}} \sum_{|k|\le\frac{s^{1/\alpha}}{\epsilon},\, |l|>\frac{s^{1/\alpha}}{\epsilon}} \frac{1}{|l|^{d+\alpha+a}} \\
&\lesssim \epsilon^{2d-\beta+a} \int_{\epsilon^\alpha}^t\frac{\d s}{s^{(a+d-\alpha)/\alpha}} \cdot \frac{s^{d/\alpha}}{\epsilon^d} \cdot \frac{\epsilon^{\alpha+a}}{s^{(\alpha+a)/\alpha}}\\
&\lesssim  \epsilon^{d+\alpha-\beta+2a} \int_{\epsilon^\alpha}^t \frac{\d s}{s^{2a/\alpha}}\\
& \ll\epsilon^{2\rho}.
\end{split}\ees
Finally we bound $\mathcal{A}_3$ as follows.
\bes\begin{split}
\mathcal{A}_3 & \lesssim \epsilon^{2d-\beta+2a}\int_{\epsilon^\alpha}^t \d s\, \left(\sum_{|k|\ge \frac{s^{1/\alpha}}{\epsilon}}\frac{s}{|k|^{d+\alpha+a}} \right)^2 \\
& \lesssim \epsilon^{2d+2\alpha+4a-\beta}\int_{\epsilon^\alpha}^t\frac{\d s}{s^{2a/\alpha}}\\
&\ll \epsilon^{2\rho}.
\end{split}\ees
Combining all our bounds gives us the lemma.
\end{proof}
Next consider for $x\in \R^d$,
\bes
W_t^{(\epsilon)}(\epsilon[x/\epsilon]) := 1+\sum_{k\in \Z^d} \int_0^t\int_{C^{(\epsilon)}(\epsilon k)} \frac{P^{(\epsilon)}_{t-s}(\epsilon k-\epsilon[x/\epsilon])}{\epsilon^d} \sigma\big(u_s(\epsilon k)\big)\, F(\d s \, \d y).
\ees

\begin{lemma}
For $x\in \R^d$ and $t>0$,
\begin{equation*}  
\E\left|W_t^{(\epsilon)}(\epsilon[x/\epsilon])-V_t^{(\epsilon)}(\epsilon[x/\epsilon])\right|^m\lesssim\epsilon^{\eta m}.
\end{equation*}

\end{lemma}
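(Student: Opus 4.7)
The key observation is that the only difference between $W_t^{(\epsilon)}$ and $V_t^{(\epsilon)}$ is the time interval of integration: $W$ integrates over $[0,t]$ while $V$ integrates only over $[0,t-\epsilon^\alpha]$. Hence the difference is a stochastic integral over the short interval $[t-\epsilon^\alpha,t]$ of length $\epsilon^\alpha$, namely
\bes
W_t^{(\epsilon)}\bigl(\epsilon[x/\epsilon]\bigr)-V_t^{(\epsilon)}\bigl(\epsilon[x/\epsilon]\bigr)=\sum_{k\in\Z^d}\int_{t-\epsilon^\alpha}^{t}\int_{C^{(\epsilon)}(\epsilon k)}\frac{P^{(\epsilon)}_{t-s}(\epsilon k-\epsilon[x/\epsilon])}{\epsilon^d}\sigma\bigl(u_s(\epsilon k)\bigr)\,F(\d s\,\d y).
\ees

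The plan is to apply Burkholder's inequality exactly as in the previous lemmas. Using the uniform moment bound on $u_s$ together with the Lipschitz hypothesis on $\sigma$, together with the bound \eqref{eq:f:int} on $\int_{C^{(\epsilon)}(\epsilon k)\times C^{(\epsilon)}(\epsilon l)}f(u-v)\,\d u\,\d v$, I would obtain
\bes
\bigl\|W_t^{(\epsilon)}\bigl(\epsilon[x/\epsilon]\bigr)-V_t^{(\epsilon)}\bigl(\epsilon[x/\epsilon]\bigr)\bigr\|_m^2\lesssim\int_{t-\epsilon^\alpha}^t\d s\sum_{k,l\in\Z^d}P^{(\epsilon)}_{t-s}(\epsilon k')\,P^{(\epsilon)}_{t-s}(\epsilon l')\,\frac{1}{\epsilon^\beta+|\epsilon(k-l)|^\beta},
\ees
after shifting by $[x/\epsilon]$ (the bound is translation-invariant).

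The main and only nontrivial point is then to absorb the singular factor on the right. Here the useful observation is simply
\bes
\frac{1}{\epsilon^\beta+|\epsilon(k-l)|^\beta}\le \frac{1}{\epsilon^\beta}
\ees
for every $k,l$, which is available uniformly and does not require any local limit theorem information on $P^{(\epsilon)}$ (this is fortunate, since in this small-time regime $t-s\le \epsilon^\alpha$ Theorem \ref{local-limit} is outside its range). Once we use this crude bound, the double sum collapses because $P^{(\epsilon)}_{t-s}$ is a probability measure on $\epsilon\Z^d$, so $\sum_{k',l'}P^{(\epsilon)}_{t-s}(\epsilon k')P^{(\epsilon)}_{t-s}(\epsilon l')=1$.

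Carrying out the time integral then yields
\bes
\bigl\|W_t^{(\epsilon)}-V_t^{(\epsilon)}\bigr\|_m^2\lesssim \frac{\epsilon^\alpha}{\epsilon^\beta}=\epsilon^{\alpha-\beta}=\epsilon^{2\eta},
\ees
recalling the definition $\eta=(\alpha-\beta)/2$ in \eqref{eq:eta:eta'}. Raising to the $m/2$ power gives the desired estimate $\E|W_t^{(\epsilon)}-V_t^{(\epsilon)}|^m\lesssim \epsilon^{\eta m}$. There is no genuine obstacle here: the worry might be that the pointwise bound $(\epsilon^\beta+|\epsilon(k-l)|^\beta)^{-1}\le \epsilon^{-\beta}$ is too crude to be useful on long time intervals, but over an interval of length exactly $\epsilon^\alpha$ the $\epsilon^{-\beta}$ factor is cancelled by the $\epsilon^\alpha$ from the time integration, which is precisely how the exponent $\eta$ arises.
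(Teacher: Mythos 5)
Your proposal is correct and follows essentially the same route as the paper: isolate the stochastic integral over $[t-\epsilon^\alpha,t]$, apply Burkholder, bound $\int_{C^{(\epsilon)}(\epsilon k)\times C^{(\epsilon)}(\epsilon l)}f(y-w)\,\d y\,\d w\lesssim\epsilon^{2d-\beta}$ (the paper invokes this directly as \eqref{eq:f}, which is exactly your crude bound on \eqref{eq:f:int}), sum out the probability measures, and integrate over the interval of length $\epsilon^\alpha$ to get $\epsilon^{\alpha-\beta}=\epsilon^{2\eta}$. Your closing remark correctly identifies why the crude bound suffices here but not in Lemma \ref{lem:v-V}.
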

\begin{proof}
We begin with
\begin{align*} 
W_t^{(\epsilon)}(\epsilon[x/\epsilon])-V_t^{(\epsilon)}(\epsilon[x/\epsilon])= \sum_{k\in \Z^d} \int_{t-\epsilon^\alpha}^t\int_{C^{(\epsilon)}(\epsilon k)} \frac{P^{(\epsilon)}_{t-s}(\epsilon k-\epsilon[x/\epsilon])}{\epsilon^d} \sigma\big(u_s(\epsilon k)\big)\, F(\d s \, \d y).
\end{align*}
As before, we have 
\begin{align*}
&\left\|W_t^{(\epsilon)}(\epsilon[x/\epsilon])-V_t^{(\epsilon)}(\epsilon[x/\epsilon])\right\|_m^2\\
&\qquad\lesssim\sum_{k,\,l\in \Z^d} \int_{t-\epsilon^\alpha}^t\int_{C^{(\epsilon)}(\epsilon k)\times C^{(\epsilon)}(\epsilon l)} \frac{P^{(\epsilon)}_{t-s}(\epsilon k-\epsilon[x/\epsilon])}{\epsilon^d} \frac{P^{(\epsilon)}_{t-s}(\epsilon l-\epsilon[x/\epsilon])}{\epsilon^d}f(y-w) \, \d s \, \d y\,\d w.
\end{align*}
Using
\begin{equation} \label{eq:f}
\int_{C^{(\epsilon)}(\epsilon k)\times C^{(\epsilon)}(\epsilon l)}f(y-w)  \, \d y\,\d w \lesssim \epsilon^{2d-\beta},
\end{equation}
and that $P^{(\epsilon)}$ are probability measures completes the proof.
\end{proof}
Before we give our final approximation lemma we state a proposition required in the proof.
\begin{proposition} The following holds uniformly in $0<\epsilon<1$
\be \label{eq:cov}
\int_0^T\d s \sum_{k,l \in \Z^d} \int_{C^{(\epsilon)}(\epsilon k)}\d u  \int_{C^{(\epsilon)}(\epsilon l)}\d v \, \frac{P_s^{(\epsilon)}(\epsilon k)}{\epsilon^d}\cdot f(u-v) \cdot \frac{P_s^{(\epsilon)}(\epsilon l)}{\epsilon^d} < \infty.
\ee
\end{proposition}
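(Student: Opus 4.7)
The plan is to split the time integral at $s=\epsilon^\alpha$ and treat the two resulting ranges by different means. On the short piece $[0,\epsilon^\alpha]$ the local limit theorem gives no useful information, but the interval itself is very short, so a crude bound using only that $\{P_s^{(\epsilon)}(\epsilon k)\}_k$ is a probability distribution will suffice. On $[\epsilon^\alpha,T]$ the local limit theorem allows us to replace $\epsilon^{-d}P^{(\epsilon)}_s(\epsilon k)$ by $p_s(\epsilon k)$ up to a universal constant and so reduce to the continuous bound of Proposition \ref{prop-cor}.

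For the upper range $s\in[\epsilon^\alpha,T]$ (with $0<\alpha<2$), the key input is estimate \eqref{eq:combined}, which in this range gives
\begin{equation*}
\frac{P_s^{(\epsilon)}(\epsilon k)}{\epsilon^d}\lesssim p_s(\epsilon k)\qquad\text{uniformly in }k\in\Z^d,
\end{equation*}
since the error factor $\epsilon^a/s^{a/\alpha}$ is at most $1$. Lemma \ref{scale} then yields $p_s(\epsilon k)\lesssim p_s(u)$ for every $u\in C^{(\epsilon)}(\epsilon k)$, so the $k,l$-sum for fixed $s$ is dominated by $\iint_{\R^d\times\R^d}p_s(u)f(u-v)p_s(v)\,\d u\,\d v$, which by Proposition \ref{prop-cor} is at most $Cs^{-\beta/\alpha}$. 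Because $\beta<\alpha$, the time integral $\int_{\epsilon^\alpha}^{T}s^{-\beta/\alpha}\,\d s\le\int_0^{T}s^{-\beta/\alpha}\,\d s$ is finite, uniformly in $\epsilon$.

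For the lower range $s\in[0,\epsilon^\alpha]$, I would use \eqref{eq:f:int} to bound the spatial factor by $\epsilon^{2d}/(\epsilon^\beta+|\epsilon(k-l)|^\beta)\le\epsilon^{2d-\beta}$. After substitution, the integrand in $s$ collapses to
\begin{equation*}
C\epsilon^{-\beta}\sum_{k,l\in\Z^d}P_s^{(\epsilon)}(\epsilon k)\,P_s^{(\epsilon)}(\epsilon l)=C\epsilon^{-\beta},
\end{equation*}
and integration over $[0,\epsilon^\alpha]$ yields $O(\epsilon^{\alpha-\beta})=O(1)$ since $\beta<\alpha$ and $\epsilon\le 1$. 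Combining the two ranges gives \eqref{eq:cov}.

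The main obstacle is the upper-range estimate: the reduction to Proposition \ref{prop-cor} depends on the pointwise ratio $P_s^{(\epsilon)}(\epsilon k)/\bigl(\epsilon^d p_s(\epsilon k)\bigr)$ being bounded uniformly in $k$, including in the far field $|k|\gg s^{1/\alpha}/\epsilon$ where the Riesz kernel still contributes substantially to the double sum. The uniform bound of Proposition \ref{prop:unif} alone is not enough, because its additive error $\epsilon^a/s^{(d+a)/\alpha}$ dominates $p_s(\epsilon k)$ once $|\epsilon k|\gg s^{1/\alpha}$; it is precisely the refined tail estimate Proposition \ref{prop:x}, and hence Assumption \ref{cond2}, that is needed to close the argument.
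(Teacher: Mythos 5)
Your proposal is correct and follows essentially the same route as the paper: split the time integral at $s=\epsilon^\alpha$, handle the short piece with the crude bound \eqref{eq:f:int} plus the normalization of $P^{(\epsilon)}_s$, and on $[\epsilon^\alpha,T]$ reduce to the continuous kernel and invoke Lemma \ref{scale} and Proposition \ref{prop-cor}. The one divergence is in how the reduction to $p_s$ is made on the upper range: you use the multiplicative bound $\epsilon^{-d}P_s^{(\epsilon)}(\epsilon k)\lesssim p_s(\epsilon k)$ coming from \eqref{eq:combined}, which (as the paper's remark notes) is valid only for $0<\alpha<2$, whereas the paper argues additively, observing that the difference term is exactly the quantity already controlled in \eqref{eq:v-V} (Lemma \ref{lem:v-V}), so only the term with $p_s$ in place of $\epsilon^{-d}P_s^{(\epsilon)}$ remains to be bounded. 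The additive version also covers $\alpha=2$, where the Gaussian decay of $p_t$ makes a uniform pointwise ratio bound impossible; since the paper's framework includes $\alpha=2$ (Example \ref{eg1}), you should either add the additive step for that case or note explicitly that your argument is restricted to $\alpha<2$. Your closing remark correctly identifies that Proposition \ref{prop:x} (hence Assumption \ref{cond2}) is what makes the far-field comparison work against the fat tail of the Riesz kernel.
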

\begin{proof} Consider the regions $s\le \epsilon^\alpha$ and $s>\epsilon^\alpha$ separately. For $s\le \epsilon^\alpha$ we 
use \eqref{eq:f} which gives a bound of $\epsilon^{\alpha-\beta}$. In view of equation \eqref{eq:v-V} we need to only bound
\bes
\int_{\epsilon^\alpha}^T\d s \sum_{k,l \in \Z^d} \int_{C^{(\epsilon)}(\epsilon k)}\d u  \int_{C^{(\epsilon)}(\epsilon l)}\d v \, p_s(\epsilon k)\cdot f(u-v) \cdot p_s(\epsilon l) .
\ees
Because of Lemma \ref{scale} and Proposition \ref{prop-cor} we can bound the above by a constant multiple of 
\begin{equation*}
\epsilon^{\alpha-\beta} + \int_0^T \frac{\d s}{s^{\beta/\alpha}}
\end{equation*}
which is finite.
\end{proof}
Recall that 
\begin{equation*}
U_t^{(\epsilon)}(\epsilon[x/\epsilon])=1+\sum_{k\in \bZ^d}\int_0^t\frac{P^{(\epsilon)}(\epsilon k-\epsilon[x/\epsilon])}{\epsilon^d}\sigma(U_s^{(\epsilon)}(\epsilon k))\,\d B_s^{(\epsilon)}(k).
\end{equation*}
The above proposition implies that 
\be \label{eq:U:mom}
\sup_{0<\epsilon <1} \sup_{0\le t \le T, \, k \in \Z^d} \E\left| U_t^{(\epsilon)}(\epsilon k)\right|^m < \infty,\text{ for all } m \ge 2 \text{ and } T< \infty;
\ee
this can be seen by following the arguments in Theorem \ref{thm:dis:mom:bd}. Here is our final lemma of this section. 
\begin{lemma}\label{lem:U-W} For $x\in \R^d$ and $t>0$,
\begin{equation*}
\E\left|U_t^{(\epsilon)}(\epsilon[x/\epsilon])-W_t^{(\epsilon)}(\epsilon[x/\epsilon])\right|^m\lesssim\epsilon^{\rho m},
\end{equation*}
where $\rho= a \wedge \eta$.
\end{lemma}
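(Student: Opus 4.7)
The plan is to subtract the two mild formulations to isolate the term $\sigma(U^{(\epsilon)}_s(\epsilon k))-\sigma(u_s(\epsilon k))$, apply a Burkholder-type estimate, and then close a Gronwall-type inequality by using the finiteness of the covariance integral proved just before the lemma, together with the previous four approximation lemmas to handle $W-u$.

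First I would observe that, by the definition of $B^{(\epsilon)}_t(\epsilon k)$ via integrating $F$ over $C^{(\epsilon)}(\epsilon k)$, one can rewrite $U^{(\epsilon)}$ as an integral against the colored noise $F$, so that
\bes
U_t^{(\epsilon)}(\epsilon[x/\epsilon])-W_t^{(\epsilon)}(\epsilon[x/\epsilon])= \sum_{k\in\Z^d}\int_0^t\!\int_{C^{(\epsilon)}(\epsilon k)} \frac{P^{(\epsilon)}_{t-s}(\epsilon k-\epsilon[x/\epsilon])}{\epsilon^d}\bigl[\sigma(U_s^{(\epsilon)}(\epsilon k))-\sigma(u_s(\epsilon k))\bigr]F(\d s\,\d y).
\ees
Taking $m$-th moments, applying Burkholder's inequality and using the Lipschitz property of $\sigma$ together with Cauchy--Schwarz gives
\bes
\bigl\|U_t^{(\epsilon)}(\epsilon[x/\epsilon])-W_t^{(\epsilon)}(\epsilon[x/\epsilon])\bigr\|_m^2 \lesssim \int_0^t \d s\sum_{k,l\in\Z^d} Q_{t-s}(k)Q_{t-s}(l) R(k-l,\epsilon) \,\|\mathcal E_s(k)\|_m\,\|\mathcal E_s(l)\|_m,
\ees
where $\mathcal E_s(k):=U^{(\epsilon)}_s(\epsilon k)-u_s(\epsilon k)$, $Q_r(k):=\epsilon^{-d}P^{(\epsilon)}_r(\epsilon k-\epsilon[x/\epsilon])$ and $R(k-l,\epsilon):=\int_{C^{(\epsilon)}(\epsilon k)}\int_{C^{(\epsilon)}(\epsilon l)}f(u-v)\,\d u\,\d v$.

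Second, I would decompose $\mathcal E_s(k)=[U^{(\epsilon)}_s(\epsilon k)-W^{(\epsilon)}_s(\epsilon k)]+[W^{(\epsilon)}_s(\epsilon k)-u_s(\epsilon k)]$. The second piece has already been handled: chaining the previous four lemmas (the $W-V$, $V-v$, $v-u^{(\epsilon)}$ and $u^{(\epsilon)}-u$ bounds) gives $\|W^{(\epsilon)}_s(\epsilon k)-u_s(\epsilon k)\|_m\lesssim \epsilon^\rho$ uniformly in $k$ and $s\in[0,T]$, with $\rho=a\wedge\eta$. Consequently
\bes
\|\mathcal E_s(k)\|_m\,\|\mathcal E_s(l)\|_m \;\lesssim\; \mathcal F(s) + \epsilon^{2\rho},\qquad \mathcal F(s):=\sup_{k\in\Z^d}\bigl\|U^{(\epsilon)}_s(\epsilon k)-W^{(\epsilon)}_s(\epsilon k)\bigr\|_m^2.
\ees
A translation in $k,l$ shows that the weighted sum $\sum_{k,l}Q_{r}(k)Q_{r}(l)R(k-l,\epsilon)$ is independent of $x$, so taking the supremum over $x$ on the left yields
\bes
\mathcal F(t) \;\le\; c_1\epsilon^{2\rho}\int_0^t K(t-s)\,\d s \;+\; c_2\int_0^t \mathcal F(s)\,K(t-s)\,\d s, \qquad K(r):=\sum_{k,l}Q_r(k)Q_r(l)R(k-l,\epsilon).
\ees

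Third, I invoke the proposition displayed just before the lemma: $\int_0^T K(r)\,\d r<\infty$ uniformly in $\epsilon$. The first term on the right is therefore $O(\epsilon^{2\rho})$. To handle the second (Volterra) term I repeat the $e^{-\eta t}$ trick used in Propositions \ref{prop:sigma:comp} and \ref{prop:i_set:fin}: multiply both sides by $e^{-\eta t}$, take the supremum over $t\in[0,T]$, and choose $\eta$ large enough that $c_2\int_0^T e^{-\eta r}K(r)\,\d r\le 1/2$. Such a choice exists because $K$ is nonnegative and integrable on $[0,T]$, so the dominated convergence theorem gives $\int_0^T e^{-\eta r}K(r)\,\d r\to 0$ as $\eta\to\infty$. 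Absorbing the small term on the right yields $\sup_{t\le T}e^{-\eta t}\mathcal F(t)\lesssim \epsilon^{2\rho}$, whence $\mathcal F(t)\lesssim \epsilon^{2\rho}$ on $[0,T]$, which is exactly the claimed bound.

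The principal obstacle is that the kernel $K$ is singular near $r=0$, so one cannot simply apply a bounded-kernel Gronwall inequality. The key technical input that overcomes this is precisely the uniform-in-$\epsilon$ integrability of $K$ provided by the preceding proposition; once that is in hand, the $e^{-\eta t}$ absorption is routine. The remaining care is in ensuring that the bound on $W-u$ furnished by the earlier four lemmas is uniform in $k$ and $s$, which is immediate from their statements since the bounds there are translation-invariant.
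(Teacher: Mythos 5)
Your proposal is correct and follows essentially the same route as the paper: the same decomposition of $\sigma(U^{(\epsilon)}_s)-\sigma(u_s)$ through $W^{(\epsilon)}_s$, the same use of the preceding lemmas for $\|W^{(\epsilon)}_s-u_s\|_m\lesssim\epsilon^\rho$, and the same Gronwall closure via the uniform integrability of the covariance kernel (the paper merely says ``a suitable form of Gronwall's inequality'' where you spell out the $e^{-\eta t}$ absorption). The only points worth flagging are that $\eta$ must be chosen uniformly in $\epsilon$ (which follows since the proof of the preceding proposition gives the $\epsilon$-independent dominating bound $K(r)\lesssim r^{-\beta/\alpha}$) and that the absorption step needs the a priori finiteness $\sup_{s\le T}\mathcal F(s)<\infty$, which the paper supplies via \eqref{eq:u:mom} and \eqref{eq:U:mom}.
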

\begin{proof}
We obviously have
\begin{align*}
U_t^{(\epsilon)}(\epsilon[x/\epsilon])&-W_t^{(\epsilon)}(\epsilon[x/\epsilon])\\
&=\sum_{k\in \Z^d} \int_0^t\int_{C^{(\epsilon)}(\epsilon k)} \frac{P^{(\epsilon)}_{t-s}(\epsilon k-\epsilon[x/\epsilon])}{\epsilon^d}\left[\sigma\big(U_s^{(\epsilon)}(\epsilon k)\big)-\sigma\big(u_s(\epsilon k)\big)\right]F( \d s \, \d y).
\end{align*}
We will split the integral above by using the following observation,
\begin{align*}
\sigma(U_s^{(\epsilon)}(\epsilon k))-\sigma\big(u_s(\epsilon k)\big)=\sigma\big(U_s^{(\epsilon)}(\epsilon k)\big)-\sigma(W_s^{(\epsilon)}(\epsilon k))+\sigma\big(W_s^{(\epsilon)}(\epsilon k)\big)-\sigma\big(u_s(\epsilon k)\big).
\end{align*}
From the above Lemmas, we have
\begin{align*}
\E\left|W_s^{(\epsilon)}(\epsilon k)-u_s(\epsilon k)\right|^m\lesssim\epsilon^{\rho m}.
\end{align*}
The above implies that 
\begin{align*}
\E\left|U_s^{(\epsilon)}(\epsilon k)-u_s(\epsilon k)\right|^m\leq c_2\left[\epsilon^{\rho m}+
\E\left|W_s^{(\epsilon)}(\epsilon k)-U_s^{(\epsilon)}(\epsilon k)\right|^m\right].
\end{align*}
Upon setting
\begin{align*}
\mathcal{D}^{(\epsilon)}(t):=\sup_{x\in \epsilon \Z^d}\left\{\E\left|U_t^{(\epsilon)}(x)-W_t^{(\epsilon)}(x)\right|^m\right\}^{2/m},
\end{align*}
we obtain 
\begin{align*}
\mathcal{D}^{(\epsilon)}(t)\leq c_3\epsilon^{2\rho} +c_4\int_0^t \d s \, \mathcal{D}^{(\epsilon)}(s)\sum_{k,l \in \Z^d} \int_{C^{(\epsilon)}(\epsilon k)}\d u  \int_{C^{(\epsilon)}(\epsilon l)}\d v \, \frac{P_s^{(\epsilon)}(\epsilon k)}{\epsilon^d}\cdot f(u-v) \cdot \frac{P_s^{(\epsilon)}(\epsilon l)}{\epsilon^d}. 
\end{align*}
From \eqref{eq:u:mom} and \eqref{eq:U:mom}, we have that
 $\sup_{0\le s\le T} \mathcal{D}^{(\epsilon)}(s)<\infty$.  A suitable form of Gronwall's inequality now finishes the proof.
\end{proof}
We can now finally give the main results.
\begin{proof}[Proof of Theorem \ref{thm:main}]
For the special case that the initial profile is identically one, the proof easily follows by combining the previous lemmas together with 
\begin{align*}
\E\left|u_t(\epsilon[x/\epsilon])-u_t(x)\right|^m\lesssim \epsilon^{\eta m},
\end{align*}
where $\eta$ is defined in \eqref{eq:eta:eta'}. To obtain the result in the generality as described in the introduction, it suffices to find a good bound on the following quantity
\begin{align}\label{eq:det:diff}
\Big| \int_{\R^d}p_t(x-y)u_0(y)\,\d y-\sum_{k\in  \Z^d}P^{(\epsilon)}_t(\epsilon[x/\epsilon]-\epsilon k )U_0^{(\epsilon)}(\epsilon k)\Big|\quad\text{for}\quad x\in \R^d. 
\end{align}
We begin with 
\begin{align*}
&\Big| \int_{\R^d}p_t(\epsilon[x/\epsilon]-y)u_0(y)\,\d y-\sum_{k\in  \Z^d}P^{(\epsilon)}_t(\epsilon[x/\epsilon]-\epsilon k)U_0^{(\epsilon)}(\epsilon k)\Big|\\ 
&\quad\leq \Big| \int_{\R^d}p_t(\epsilon[x/\epsilon]-y)u_0(y)\,\d y-\sum_{k\in  \Z^d}\int_{C^{(\epsilon)}(\epsilon k)}p_t(\epsilon[x/\epsilon]-\epsilon k )u_0(z)\,\d z\Big|\\
&\qquad +\Big|\sum_{k\in  \Z^d}\int_{C^{(\epsilon)}(\epsilon k)}p_t(\epsilon[x/\epsilon]-\epsilon k)u_0(z)\,\d z-\sum_{k\in  \Z^d}P^{(\epsilon)}_t(\epsilon[x/\epsilon]-\epsilon k)U_0^{(\epsilon)}(\epsilon k)\Big|\\
&\quad=:I_1+I_2.
\end{align*}
The mean value theorem and an application of Lemma \ref{ker-d} show that 
\begin{align*}
I_1&=\Big| \int_{\R^d}p_t(\epsilon[x/\epsilon]-z)u_0(z)\,\d z-\sum_{k\in  \Z^d}\int_{C^{(\epsilon)}(\epsilon k)}p_t(\epsilon[x/\epsilon]-\epsilon k)u_0(z)\,\d z\Big|\\
&\lesssim\frac{\epsilon}{t^{1/\alpha}}\int_{\R^d}p_t\left(\frac{\epsilon[x/\epsilon]-z}{2}\right)u_0(z)\,\d z.
\end{align*}
We can rewrite $I_2$ as follows,
\begin{align*}
I_2&=\epsilon^d\sum_{k\in  \Z^d}\left[p_t(\epsilon[x/\epsilon]-\epsilon k)-\frac{1}{\epsilon^d}P^{(\epsilon)}_t(\epsilon[x/\epsilon]-\epsilon k)\right]U_0^{(\epsilon)}(\epsilon k)\\
&\lesssim  \sum_{\substack{k\in  \Z^d\\ |\epsilon k|\le t^{1/\alpha}}} \frac{\epsilon^{d+a}}{t^{(d+a)/\alpha}} + \sum_{\substack{k\in  \Z^d\\ |\epsilon k|> t^{1/\alpha}}}  \frac{t\epsilon^{d+a}}{|\epsilon k|^{d+\alpha+a}} \\
&\lesssim \frac{\epsilon^a}{t^{a/\alpha}} + \frac{\epsilon^{1+a}}{t^{(1+a)/\alpha}},
\end{align*}
for $t\ge \epsilon^\alpha$. Combining these estimates and using the fact that $0<a<1$, we obtain 
\begin{align*}
\Big| \int_{\R^d}p_t(\epsilon[x/\epsilon]-y)u_0(y)\,\d y-\sum_{k\in  \Z^d}P^{(\epsilon)}_t(\epsilon[x/\epsilon]-\epsilon k)U_0^{\epsilon}(\epsilon k)\Big|\lesssim \frac{\epsilon^a}{t^{a/\alpha}},
\end{align*}
as long as $t\ge \epsilon^\alpha$. We now use the mean value theorem and the bound on the derivative of $p_t(\cdot)$ to compute
\begin{align*}
\Big| \int_{\R^d}p_t(x-y)u_0(y)\, \d y-\int_{\R^d}p_t(\epsilon[x/\epsilon]-y)u_0(y)\, \d y\Big|\lesssim \frac{\epsilon}{t^{1/\alpha}}.
\end{align*}
Thus we obtain
\begin{align*}
\Big| \int_{\R^d}p_t(x-y)u_0(y)\,\d y-\sum_{k\in  \Z^d}P^{(\epsilon)}_t(\epsilon[x/\epsilon]-\epsilon k)U_0^{(\epsilon)}(\epsilon k)\Big|\lesssim \frac{\epsilon^a}{t^{a/\alpha}},
\end{align*}
whenever $t\ge \epsilon^\alpha$. This gives the rate of convergence stated in the theorem.
\end{proof}

\section{Proof of remaining results} \label{sec:remain}
We first focus on the proof of Theorem \ref{thm:strong:approx}. Let us write
\be\begin{split} \label{u:U:split}
u_t(x) &=\int_{\R^d}p_t(x-y) u_0(y) \, \d y + \tilde{u}_t(x) ,\quad x\in \R^d\\
U_t^{(\epsilon)}(\epsilon k) &=\sum_{l\in  \Z^d}P^{(\epsilon)}_t(\epsilon l-\epsilon k )U_0^{(\epsilon)}(\epsilon l) + \tilde{U}^{(\epsilon)}(\epsilon k),\quad k \in \Z^d,
\end{split}\ee
where $\tilde{u}$ and $\tilde{U}^{(\epsilon)}$ denote the stochastic parts of $u$ and $U^{(\epsilon)}$ respectively.
We have already obtained the rate of convergence of the deterministic part of $u_t(x)-U_t^{(\epsilon)}(\epsilon[x/\epsilon])$, see the computations following \eqref{eq:det:diff}. 
We thus need to focus only the stochastic parts $\tilde{u}$ and $\tilde{U}^{(\epsilon)}$. For this we shall need the following lemma
which is a H\"older continuity estimate for the difference $U_t^{(\epsilon)}(x)- U_s^{(\epsilon)}(x)$.
\begin{lemma} \label{lem:dis:hol}  Fix $T>0$ and integer $m\ge 2$. Then 
\bes
\sup_{x\in \epsilon \Z^d} \E\left(\left\vert \tilde U_t^{(\epsilon)}(x)- \tilde U_s^{(\epsilon)}(x)\right\vert^m\right) \lesssim \left(\frac{t-s}{\epsilon^\beta}\right)^{m/2}
\ees
holds uniformly for $0<s<t \le T$ and $0<\epsilon<1$.
\end{lemma}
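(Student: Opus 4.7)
The plan is to exploit the mild formulation and split the increment as
\[
\tilde U_t^{(\epsilon)}(x) - \tilde U_s^{(\epsilon)}(x) = I_1 + I_2,
\]
where $I_2$ is the stochastic integral on $[s,t]$ with kernel $P^{(\epsilon)}_{t-r}(\epsilon k - x)/\epsilon^d$, and $I_1$ is the integral on $[0,s]$ with the kernel difference $[P^{(\epsilon)}_{t-r}(\epsilon k - x) - P^{(\epsilon)}_{s-r}(\epsilon k - x)]/\epsilon^d$, each multiplied by $\sigma(U^{(\epsilon)}_r(\epsilon k))$ and integrated against $F(\d r\,\d y)$. For $I_2$, Burkholder's inequality, the uniform moment bound \eqref{eq:U:mom}, the crude kernel bound \eqref{eq:f}, and $\sum_k P^{(\epsilon)}_{t-r}(\epsilon k - x) = 1$ directly give $\|I_2\|_m^2 \lesssim (t-s)/\epsilon^\beta$.

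For $I_1$ I would split the time integral at $r_0 := (s-(t-s))_+$. On the near interval $[r_0, s]$, the triangle inequality $|a-b|\le a+b$ reduces the problem to two copies of the $I_2$-estimate over a window of length at most $2(t-s)$, again contributing a constant times $(t-s)/\epsilon^\beta$. On the far interval $[0, r_0]$, Burkholder plus the same kernel bound give
\[
\|I_1^{\mathrm{far}}\|_m^2 \lesssim \epsilon^{-\beta}\int_0^{r_0} \Bigl(\sum_k \bigl|P^{(\epsilon)}_{t-r}(\epsilon k - x) - P^{(\epsilon)}_{s-r}(\epsilon k - x)\bigr|\Bigr)^2 \d r.
\]

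The main obstacle is to bound this inner $\ell^1$-norm sharply. My approach is to use the semigroup identity $P^{(\epsilon)}_{t-r} - P^{(\epsilon)}_{s-r} = (P^{(\epsilon)}_{t-s} - \mathrm{Id}) * P^{(\epsilon)}_{s-r}$ to express the difference as an average of spatial shifts of $P^{(\epsilon)}_{s-r}$ by an amount $y$ distributed as $X^{(\epsilon)}_{(t-s)/\epsilon^\alpha}$. The lattice-level shift-continuity estimate $\sum_k |P^{(\epsilon)}_u(\epsilon k - y) - P^{(\epsilon)}_u(\epsilon k)| \lesssim \min(1, |y|/u^{1/\alpha})$ will follow from Lemma \ref{ker-d} combined with Theorem \ref{local-limit}; interpolating with the trivial bound $2$ yields $(|y|/u^{1/\alpha})^\gamma$ for any $\gamma\in(0,1]$, and averaging against the fractional moment $\E|X^{(\epsilon)}_{(t-s)/\epsilon^\alpha}|^\gamma \lesssim (t-s)^{\gamma/\alpha}$ (valid for $\gamma < \alpha$) gives
\[
\sum_k \bigl|P^{(\epsilon)}_{t-r}(\epsilon k - x) - P^{(\epsilon)}_{s-r}(\epsilon k - x)\bigr| \lesssim \Bigl(\frac{t-s}{s-r}\Bigr)^{\gamma/\alpha}.
\]
Choosing any $\gamma\in(\alpha/2,\,\alpha\wedge 1)$, the substitution $u = s-r$ gives $\int_{t-s}^{s} u^{-2\gamma/\alpha}\d u \lesssim (t-s)^{1-2\gamma/\alpha}$, so that $\|I_1^{\mathrm{far}}\|_m^2 \lesssim (t-s)/\epsilon^\beta$. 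Combining the three contributions yields the stated estimate.
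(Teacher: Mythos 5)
Your decomposition of $\tilde U_t^{(\epsilon)}-\tilde U_s^{(\epsilon)}$, the bound on $I_2$, and the reduction of the far part of $I_1$ to $\epsilon^{-\beta}\int_0^{r_0}\bigl(\sum_k|P^{(\epsilon)}_{t-r}(\epsilon k)-P^{(\epsilon)}_{s-r}(\epsilon k)|\bigr)^2\,\d r$ via \eqref{eq:f} all match the opening of the paper's argument and are fine. The gap is in the step you flag as the main obstacle: the $\ell^1$ shift-continuity estimate $\sum_k |P^{(\epsilon)}_u(\epsilon k-y)-P^{(\epsilon)}_u(\epsilon k)|\lesssim\min(1,|y|/u^{1/\alpha})$ does not follow from Lemma \ref{ker-d} and Theorem \ref{local-limit} as claimed. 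Replacing $P^{(\epsilon)}_u$ by $\epsilon^d p_u$ and summing the local-limit error over the whole lattice (using \eqref{eq:combined}, or Propositions \ref{prop:unif} and \ref{prop:x} for $\alpha=2$) leaves a residual of order $(\epsilon^\alpha/u)^{a/\alpha}$ that is independent of the shift $y$ and therefore cannot be dominated by $|y|/u^{1/\alpha}$ (note the left side vanishes at $y=0$ while your proposed derivation does not); after squaring and integrating in $r$ this residual contributes roughly $\epsilon^{2a-\beta}T^{1-2a/\alpha}$, which is not $O((t-s)/\epsilon^{\beta})$ when $t-s$ is small. A genuine proof of the shift-continuity bound would require a new gradient-type local limit theorem (with an error that itself vanishes with $y$), i.e.\ Fourier work on the torus comparable to Section 3 — it is not available off the shelf in the paper. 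Separately, your exponent window $\gamma\in(\alpha/2,\alpha\wedge 1)$ is empty when $\alpha=2$ (interpolation forces $\gamma\le 1$, integrability forces $2\gamma/\alpha>1$), so even granting the shift estimate the argument fails in the Gaussian case, which the lemma must cover.

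The paper avoids all of this by working on the Fourier side: it extends $P_r^{(\epsilon)}$ to the step function $Q_r^{(\epsilon)}$, factors the Riesz kernel as $f=h*\tilde h$ with $\hat h^2(\xi)=c|\xi|^{-(d-\beta)}$, applies Plancherel, and then uses the elementary bound $1-e^{-x}\le\sqrt{x}$ applied to $1-\exp(-(t-s)[1-\hat\mu(\epsilon\xi)]/\epsilon^\alpha)$ to extract the factor $t-s$ exactly, uniformly in $\alpha\in(0,2]$. If you want to salvage your real-space approach, you would need to prove the $\ell^1$ shift-continuity estimate directly (and, for $\alpha=2$, a second-order version exploiting the symmetry of the walk); as written, the proof is incomplete.
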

\begin{proof} 
Firstly \eqref{eq:U:mom} gives 
\bes
\sup_{0<\epsilon<1}\;\sup_{x\in \epsilon \Z^d,\, 0\le t\le T}  \E \big[ \big\vert \tilde U_t^{(\epsilon)}(x)\big\vert^m\big] < \infty.
\ees
Without loss of generality, let us restrict to $x=0$. An application of Burkholder's inequality gives for $0<\epsilon<\epsilon_0$
\be\begin{split} \label{eq:ho:dis:she}
 &\big\|U_t^{(\epsilon)}(0) - U_s^{(\epsilon)}(0)\big\|_m^2\\
& \lesssim \epsilon^{-2d} \sum_{k,l \in \Z^d} \int_0^s \d r \int_{C^{(\epsilon)}(\epsilon k)}\d x \int_{C^{(\epsilon)}(\epsilon l)} \d y \,\big[P_{t-r}^{(\epsilon)}(\epsilon k)-P_{s-r}^{(\epsilon)}(\epsilon k)\big] \cdot f(x-y)\cdot  \big[P_{t-r}^{(\epsilon)}(\epsilon l)-P_{s-r}^{(\epsilon)}(\epsilon l)\big] \\
&\quad  +  \epsilon^{-2d}  \sum_{k,l \in \Z^d} \int_s^t  \int_{C^{(\epsilon)}(\epsilon k)}\d x \int_{C^{(\epsilon)}(\epsilon l)} \d y \, P_{t-r}^{(\epsilon)}(\epsilon k) \cdot f(x-y) \cdot P_{t-r}^{(\epsilon)}(\epsilon l).
\end{split}\ee
We bound the second term first. Because of \eqref{eq:f} this is less than a constant multiple of
\bes
 \epsilon^{-\beta} \int_s^t \d r \, \big(\sum_k P_{t-r}^{(\epsilon)} (\epsilon k)\big)^2  \lesssim \frac{t-s}{\epsilon^\beta}.
\ees
Let us define the function $Q_r^{(\epsilon)}: \R^d \to \R_+$ by
\bes
Q_r^{(\epsilon)}(x) := P_r^{(\epsilon)}(\epsilon k) \quad \text{if } x \in C^{(\epsilon)}(\epsilon k).
\ees
The Fourier transform of this function is easily computed to be
\be\begin{split} \label{eq:qhat}
\hat{Q}_r^{(\epsilon)} (\xi) & = \mathcal{H}(\epsilon, \xi) \cdot \sum_{k\in \Z^d} P_r^{(\epsilon)}(\epsilon k) \cdot e^{i\xi\cdot \epsilon k}  \\
& = \mathcal{H}(\epsilon, \xi) \cdot  \E \exp\big(i \epsilon \xi \cdot X_{\frac{r}{\epsilon^\alpha}}\big) \\
&= \mathcal{H}(\epsilon, \xi) \cdot \exp\left[-\frac{r}{\epsilon^\alpha}[1-\hat\mu(\epsilon \xi)]\right]
\end{split}\ee
where $\mathcal{H}(\epsilon, \xi) = \int_{C^{(\epsilon)}(0)}e^{i \xi \cdot x} \d x$. It is now checked that
$\mathcal{H}(\epsilon, \xi) \le \min \big(\frac{\epsilon^{d-1}}{|\xi|} ,\,\epsilon^d\big)$.

It is known that $f(x) =(h*\tilde{h})(x)$ where 
\bes
h(x)= \frac{c_1}{|x|^{(d+\beta)/2}},\; x \in \R^d,
\ees
and $\tilde{h}(x)=h(-x)$. This can be seen for example from \eqref{eq:fourier:riesz} and by applying the Fourier transform to $h*\tilde{h}$. The first term in \eqref{eq:ho:dis:she} is equal to
\bes
\begin{split}
& \epsilon^{-2d} \int_0^s \d r \int_{\R^d}\d x \int_{\R^d} \d y \,\big[Q_{t-r}^{(\epsilon)}(x)-Q_{s-r}^{(\epsilon)}(x)\big] \cdot f(x-y)\cdot  \big[Q_{t-r}^{(\epsilon)}(y)-Q_{s-r}^{(\epsilon)}(y)\big] \\
&= C\epsilon^{-2d} \int_0^s  \d r \int_{\R}^d \d z\, \big((Q^{(\epsilon)}_{t-r}-Q^{(\epsilon)}_{s-r})*h\big)^2(z) \\
&=  C\epsilon^{-2d} \int_0^s  \d r \int_{\R}^d \d \xi\, \big(\hat{Q}^{(\epsilon)}_{t-r}-\hat{Q}^{(\epsilon)}_{s-r}\big)^2(\xi)\cdot \hat h^2(\xi) \\
&= C\epsilon^{-2d} \int_0^s  \d r \int_{\R}^d \d \xi\, \frac{ \big(\hat{Q}^{(\epsilon)}_{t-r}-\hat{Q}^{(\epsilon)}_{s-r}\big)^2(\xi)}{|\xi|^{d-\beta}}.
\end{split}
\ees
We next use our expression for $\hat{Q}_r^{(\epsilon)}$ derived in \eqref{eq:qhat} and the bound for $\mathcal{H}(\epsilon, \xi)$ to bound the above expression. We get
\bes \begin{split}
\epsilon^{-2d} \int_0^s \d r &\int_{\R^d} \d \xi \, \frac{\mathcal{H}^2(\epsilon, \xi)}{|\xi|^{d-\beta}} \cdot\exp \left(-\frac{2(s-r)[1-\hat\mu(\epsilon\xi)]}{\epsilon^\alpha}\right)\cdot \left[1-\exp\left(-\frac{(t-s)[1-\hat\mu(\epsilon\xi)]}{\epsilon^\alpha}\right)\right]^2 \\
&\lesssim\epsilon^{-2d}  \int_{\R^d} \d \xi \,\frac{\mathcal{H}^2(\epsilon, \xi)}{|\xi|^{d-\beta}} \cdot \frac{\epsilon^\alpha}{1-\hat\mu(\epsilon \xi)} \cdot \left[1-\exp\left(-\frac{(t-s)[1-\hat\mu(\epsilon\xi)]}{\epsilon^\alpha}\right)\right]^2 \\
&\lesssim\epsilon^{-2d}\cdot (t-s)\cdot  \int_{\R^d} \d \xi \,\frac{\mathcal{H}^2(\epsilon, \xi)}{|\xi|^{d-\beta}} \\
&\lesssim \epsilon^{-2} \cdot (t-s)\cdot \int_0^\infty \d r\, r^{d-1} \cdot \frac{\min(\epsilon^{2},r^{-2})}{r^{d-\beta}}\\
& \lesssim \frac{t-s}{\epsilon^\beta}.
\end{split}\ees
The third line follows from the well known inequality $1-e^{-x} \le \sqrt x$ valid for all positive $x$. In the last step we split the integral according to $r\le \epsilon^{-1}$ and $r>\epsilon^{-1}$ and bound each term separately.
\end{proof}
\begin{proof}[Proof of Theorem \ref{thm:strong:approx}] The proof is similar to the proof of Theorem 2.5 in
\cite{jose-khos-muel} and we only provide the preliminary estimates. First note that we have already shown that the difference of the deterministic parts in \eqref{u:U:split} satisfies the conclusion of the Theorem \ref{thm:strong:approx}. We thus need to concentrate on the difference of the stochastic parts in \eqref{u:U:split}.  The proof of Theorem \ref{thm:main} shows that \eqref{eq:ass} holds for the difference $\tilde u_t(x)-\tilde U_t^{(\epsilon)}(\epsilon[x/\epsilon])$ of the stochastic parts of $u$ and $U^{(\epsilon)}$. One next needs good control of the differences
$\tilde u_t(x)-\tilde u_s(x)$ and $\tilde U_t^{(\epsilon)}(\epsilon [x/\epsilon])-\tilde U_s^{(\epsilon)}(\epsilon[x/\epsilon])$. For each fixed integer $m\ge 2$ we have
\bes
 \sup_{x \in \R^d}\E \vert \tilde u_t(x) -\tilde u_s(x)\vert^m \le C \vert t-s \vert^{\tilde\eta m},
\ees
uniformly for all $0\le s,t \le T$, where
$\tilde\eta$ is defined in \eqref{eq:eta:eta'}. This H\"older continuity estimate can be found in \cite{boul-lahc-edda}. Next we apply Kolmogorov continuity theorem (\cite{spde-mini}, Theorem 4.3, page 10) to obtain the following bound
\bes
\sup_{x \in \R^d} \E \left(\sup_{\substack{0\le s,t\le T ,\\  |t-s| \le \epsilon^3 T}} \big \vert \tilde u_t(x)-\tilde u_s(x) \vert^m\right) \lesssim \epsilon^{3\tilde\eta m-4d}
\ees
valid for all large enough integers $m$. Using Lemma \ref{lem:dis:hol} and Kolmogorov continuity theorem again, we obtain a similar estimate for $\tilde U^{(\epsilon)}$
\bes
\sup_{x \in \R^d} \E \left(\sup_{\substack{0\le s,t\le T ,\\  |t-s| \le \epsilon^3 T}} \big \vert \tilde U^{(\epsilon)}_t(x)-\tilde U_s^{(\epsilon)}(x) \vert^m\right) \lesssim \epsilon^{3\tilde\eta m-4d}
\ees
uniformly for $0< \epsilon <1 $. These are the main estimates needed for the proof. The reader can consult \cite{jose-khos-muel} for the rest of the argument.
\end{proof}
\begin{proof}[Proof of Theorem \ref{thm:comp}] Theorem 1.1 in \cite{geis-mant} implies that the comparison principle holds for finite dimensional
stochastic differential equations (SDEs) of the form \eqref{eq:disc:torus}. Proposition \ref{prop:i_set:fin} and the continuity
of the solution then implies that it also holds for infinite dimensional SDEs of the form \eqref{eq:disc:she}. Finally Theorem
\ref{thm:main} and the continuity of the solution to \eqref{eq:she} proves the comparison principle for \eqref{eq:she}.
\end{proof}

\begin{proof}[Proof of Theorem \ref{thm:mom:comp}]
The proof is similar to that of Theorem 2.6 in \cite{jose-khos-muel}.
For any $\alpha>0$, we can find a random walk with generator $\mathscr{L}$ which satisfies the conditions in Assumptions \ref{cond1} and \ref{cond2}.  Theorem \ref{thm:mom:comp} then follows from Theorems \ref{thm:mom:disc:she} and \ref{thm:main}. Indeed Theorem \ref{thm:mom:disc:she} says that the comparison of moments holds for the solution $U^{(\epsilon)}$ of \eqref{eq:disc:approx} and the solution $\bar U^{(\epsilon)}$ of \eqref{eq:disc:approx} with $\sigma$ replaced by $\bar \sigma$. One then take limit as $\epsilon\downarrow 0$ and use Theorem \ref{thm:main} to obtain the comparison of moments result for $u$ and $\bar u$. 
\end{proof}

\begin{proof}[Proof of Theorem \ref{thm:lyap}]  For the Parabolic Anderson model, that is when $\sigma(x)=Cx$, Lemma 4.1 in \cite{kunw} implies that 
\bes
m^{\frac{2\alpha-\beta}{\alpha-\beta}} \lesssim \liminf_{t \to \infty} \frac{1}{t}\, \log \E\big(\big\vert u_t(x) \big\vert^m\big) \le \limsup_{t \to \infty} \frac{1}{t}\, \log \E\big(\big\vert u_t(x) \big\vert^m\big) \lesssim   m^{\frac{2\alpha-\beta}{\alpha-\beta}}.
\ees
While in \cite{kunw}, $u_0$ is assumed to be identically one, it is clear from the proof that the above continues to hold when $u_0$ is bounded away from zero and infinity. Theorem \ref{thm:lyap} thus follows immediately from Theorem \ref{thm:mom:comp}.
\end{proof}

\section{Some extensions} \label{sec:extend}
A close inspection of the proof of Theorem \ref{thm:main} indicates that one can provide several extensions. We list some of them here.
\begin{itemize}
\item It is clear that Theorem \ref{thm:main} still holds if the correlation function
$f$ behaves better than Riesz kernels in the sense that it grows slower at the origin and decays faster at infinity. 
In particular Theorem \ref{thm:main} holds if 
\bes
\vert f(x) \vert \lesssim \frac{1}{|x|^\beta},
\ees 
for some $\beta< \min(\alpha,d)$.  Examples of functions which satisfy this are the Poisson kernels, Ornstein-Uhlenbeck type
kernels, Cauchy kernels and many more; see \cite{foon-khos-col}. Furthermore we could get a faster rate of decay than that in \eqref{eq:ass} depending on how {\it nice} the function $f$ is. The corresponding comparison principles also hold for these correlation functions. This constitutes an important extension.
\item Although we have not attempted to do so, one could modify our arguments to include a drift term in \eqref{eq:she}.
\item We could also prove the results for more general initial profiles, for example unbounded functions or even nonnegative measures.
\end{itemize}


\noindent\textbf{Acknowledgements:} We thank Davar Khoshnevisan for useful discussions. M.J. was partially supported by EPSRC through grant EP/N028457/1.

{\small
\bibliographystyle{plain}
\bibliography{converge_color}
} \bigskip
\begin{small}
\noindent\textbf{Mohammud Foondun} (\texttt{mohammud.foondun@strath.ac.uk}).\\
\noindent Department of Mathematics and Statistics,
University of Strathclyde,
Glasgow, 
G1 1XH\\

\noindent\textbf{Mathew Joseph} (\texttt{m.joseph@sheffield.ac.uk}).\\
\noindent Department of Probability and Statistics, University of Sheffield,
	Sheffield, S3 7RH, UK\\
 
\noindent\textbf{Shiu-Tang Li} (\texttt{li@math.utah.edu}).\\
\noindent Department of Mathematics, University of Utah,
		Salt Lake City, UT 84112-0090, USA
		
\end{small}
\end{document}